\documentclass[12pt]{amsart} 
\usepackage{amsmath,amssymb,latexsym,amsthm,enumerate,amscd,hyperref,enumitem} 
\usepackage[abbrev]{amsrefs} 
\usepackage{graphicx}
\usepackage{pinlabel}
\usepackage[dvipsnames]{xcolor}
\usepackage{graphicx,tikz, tikz-cd}
\usepackage[margin=1.3in]{geometry}

\usepackage{yhmath}

\usepackage{mathtools}

\newtheoremstyle{dotless}{}{}{\itshape}{}{\bfseries}{}{ }{}

\newtheorem{Theorem}{Theorem}[section]

\newtheorem{Prop}[Theorem]{Proposition} 

\newtheorem{corollary}[Theorem]{Corollary} 
\newtheorem{Question}[Theorem]{Question} 
\newtheorem{lemma}[Theorem]{Lemma}

\theoremstyle{definition} 
\newtheorem{definition}[Theorem]{Definition} 
\newtheorem{remark}[Theorem]{Remark}
\newtheorem{example}[Theorem]{Example}

\newtheorem*{Acknowledgements}{Acknowledgements}

\newtheorem*{tits}{Tits Conjecture} 
 
\newtheorem*{generalizedtits}{Generalized Tits Conjecture}

\DeclareMathOperator{\aster}{\text{\LARGE{\textasteriskcentered}}}

\DeclareMathOperator{\piprod}{\raisebox{-0.1em}{\huge{$\pi$}}\kern -0.2em}

\newcommand{\cc}{{\mathbb C}}

\newcommand{\zz}{{\mathbb Z}}

\newcommand{\Id}{\operatorname{Id}}

\newcommand{\RA}{\operatorname{RA}}
\newcommand{\Mod}{\operatorname{Mod}}

\newcommand{\RAAG}{\operatorname{RAAG}}


\def\clap#1{\hbox to 0pt{\hss#1\hss}}

\newcommand{\comment}[1]{} 
 


\newcommand{\gs}{\sigma} 
 
\newcommand{\gt}{\tau}

\newcommand{\gG}{\Gamma}

\newcommand{\Homeo}{\operatorname{Homeo}}

\newcommand{\Lk}{\operatorname{Lk}}

	{ 
\end{enumerate}
} 

	{ 
\end{enumerate}
} 

\newenvironment{enumeratei'}{ 
\begin{enumerate}[\upshape (i)$'$]}
	{ 
\end{enumerate}
} 

\newenvironment{enumerate1'}{ 
\begin{enumerate}[\upshape (1)$'$]}
	{ 
\end{enumerate}
}

\newenvironment{enumeratea'}{ 
\begin{enumerate}[\upshape (a)$'$]}{ 
\end{enumerate}
}


\usepackage{color}
\usepackage[outerbars,color]{changebar}
\usepackage{ifthen}

\newboolean{usecolor}
\setboolean{usecolor}{true}

\ifthenelse{\boolean{usecolor}}
{
  \definecolor{colore}{cmyk}{0,1,0.6,0}
  \definecolor{coloregen}{cmyk}{0.7,0,1,0}
  \definecolor{coloresimo}{cmyk}{1,0.6,0,0}
}
{
  \definecolor{colore}{cmyk}{0,0,0,1}
  \definecolor{coloregen}{cmyk}{0,0,0,1}
  \definecolor{coloresimo}{cmyk}{0,0,0,1}
}

\setlength{\marginparwidth}{1in}

\bibliographystyle{alpha}
\numberwithin{equation}{section} 
\setcounter{section}{0}

\setlength{\marginparwidth}{1in}

\begin{document}
\title{Right-angled Artin subgroups of Artin groups}

\author{Kasia Jankiewicz  
\and{Kevin Schreve}
}
\address{Department of Mathematics, University of California, Santa Cruz, CA 95064}
\email{kasia@ucsc.edu}
\address{Department of Mathematics, Louisiana State University, Baton Rouge, LA 70806}
\email{kschreve@lsu.edu}
\date{\today} \maketitle

\begin{abstract}
The Tits Conjecture, proved by Crisp and Paris, states that squares of the standard generators of any Artin group generate an obvious right-angled Artin subgroup.  We consider a larger set of elements consisting of all the centers of the irreducible spherical special subgroups of the Artin group, and conjecture that sufficiently large powers of those elements generate an obvious right-angled Artin subgroup. This alleged right-angled Artin subgroup is in some sense as large as possible; its nerve is homeomorphic to the nerve of the ambient Artin group. We verify this conjecture for the class of locally reducible Artin groups, which includes all $2$-dimensional Artin groups, and for spherical Artin groups of any type other than $E_6, E_7, E_8$. We use our results to conclude that certain Artin groups contain hyperbolic surface subgroups, answering questions of Gordon, Long and Reid.
\end{abstract}






\section{Introduction}

Suppose $(W,S)$ is a Coxeter system (cf. \cite{bourbaki} or \cite{dbook}). 
This means that $W$ is a group, $S$ is a distinguished set of generators, and that $W$ has a presentation 
$$W := \langle s \in S| s^2 = (st)^{m_{st}} = 1 \rangle\ ,$$ where $m_{st} \in \{2,3,\dots\} \cup \{\infty\}$. 
Given a Coxeter system $(W,S)$ there is an associated \emph{Artin group} $A$. 
This group has one generator $x_s$ for each $s\in S$ and the \emph{braid relations}:
	\[
	\underbrace{x_sx_t\ \cdots}_{m_{st} \text{ terms}} = \underbrace{x_tx_s\ \cdots}_{m_{st} \text{ terms}},
	\]
where both sides of the equation are alternating words in $x_s$ and $x_t$, and where $m_{st}$ denotes the order of $st$ in $W$. The Artin group is \emph{right-angled} (and called a $\RAAG$) if $m_{st} \in \{2,\infty\}$.

Consider the subgroup of $A$ generated by the squares $x_s^2$. These elements are all contained in the \emph{pure Artin group} $PA$, which is the kernel of the canonical homomorphism $A \rightarrow W$ which sends $x_s$ to $s$. There are obvious commuting relations between the $x_s^2$, namely if $m_{st} = 2$, then $[x_s^2,x_t^2] = 1$. Crisp and Paris in \cite{cp} proved the remarkable fact that these are the only relations between these elements. This verified a conjecture of Tits, who had previously shown that the elements $\{x_s^2\}$ get sent to linearly independent elements in the abelianization of the pure Artin group. In fact, Crisp-Paris showed that the same is true if we replace to $2$ with any number $N\geq 2$. 

\begin{tits}[{\cite[Thm 1]{cp}}]\label{t:cp}
Let $A$ be an Artin group. For every $N\geq 2$, the subgroup generated by the set $\{x_s^N: s\in S\}$ is a $\RAAG$ with presentation $$\langle x_s^N| [x_s^N,x_t^N] = 1 \text{ if } m_{st} = 2\rangle$$
\end{tits}

This is one of the few theorems known to hold for all Artin groups (e.g.\ it is not known if all Artin groups are torsion-free, have solvable word problem, $\dots$). The Tits Conjecture had earlier been proved by Appel-Schupp for extra-large Artin groups (where $m_{st} > 3$) \cite{as}, by Collins for the braid groups \cite{collins}, and by Charney for the locally reducible Artin groups (where the associated Coxeter groups have each finite special subgroup a direct product of dihedral groups and $\mathbb Z/2$) \cite{charney}, see also \cite{dlser}, \cite{humphries} for more partial results. 

We will come back to Crisp and Paris' method later in the introduction. Very roughly speaking, they construct a representation from the Artin group into the mapping class group of some surface and show that it is faithful on the alleged $\RAAG$ subgroup. 

In this paper, we are interested in a conjectural generalization of the Tits Conjecture, which first appeared in \cite[Conj 4.9]{dls}. This generalization asks for a $\RAAG$ subgroup that is as ``large'' as possible in a certain sense. In particular, it contains the  $\RAAG$ subgroup that Crisp and Paris find and its nerve is homeomorphic to the nerve of the Artin group. We will now explain how there is a natural candidate for this larger $\RAAG$.

Given a Coxeter system $(W,S)$ and $T \subseteq S$, the subgroup $W_T$ generated by $ t \in T$ is called the \emph{special subgroup} corresponding to $T$. 
Then $(W_T,T)$ also is a Coxeter system. The subset $T$ is \emph{spherical} if $W_T$ is finite, in this case $W_T$ is a \emph{spherical special subgroup}. The subset $T$ is called \emph{reducible} if it decomposes as $T_1 \cup T_2$, where $m_{tt'} = 2$ for all $t \in T_1$ and $t' \in T_2$, otherwise $T$ is \emph{irreducible} (and we say $W_T$ is as well). If $T$ is reducible with decomposition $T = \cup_{i=1}^n T_i$, then $W_T = W_{T_1} \times \dots \times W_{T_n}$.

Similarly, the subgroup $A_T$ of $A$ generated by the $\{x_t\}_{t \in T}$ is the Artin group associated to the Coxeter system $(W_T, T)$ \cite{lek}.
If $T$ is irreducible and spherical, then $A_T$ is a \emph{irreducible, spherical special subgroup} of $A$. These spherical Artin groups are better understood than general Artin groups. 
Topologically, the pure Artin group (which in this case is finite index in $A$) is the fundamental group of an aspherical linear hyperplane arrangement in $\mathbb C^n$, which e.g.\ allows one to compute various cohomological invariants of $A$ \cite{deligne}. 
Combinatorially, these groups admit a Garside structure, which e.g.\ gives an easy to compute normal form  \cite{bs}. It is also known that the pure spherical Artin groups have an infinite center which is isomorphic to $\mathbb Z$ if the Coxeter group is irreducible. This center corresponds to the fundamental group of the fiber after projectivizing the hyperplane complement. We denote the generator of this by $\Delta_T^2$ (it is related to the longest element in $W_T$, see Section \ref{s:coxartin}).

We consider the subgroup of an Artin group $A$ generated by (powers of) the centers of the irreducible, spherical, pure Artin subgroups. There are some obvious commuting relations between these elements, namely if $T \subseteq U$ or if $m_{ut} = 2$ for all $u \in U$ and $t \in T$, then the corresponding centers $\Delta_U^2, \Delta_{T}^2$ commute. We write $[U,T]=1$ if  $m_{ut} = 2$ for all $u \in U$ and $t \in T$. As in the Tits Conjecture, we ask whether those are the only relations.

Let $\mathcal S$ be the set of all irreducible, spherical subsets $T\subseteq S$.
Let $\RA$ be a $\RAAG$ generated by the set $\{z_T\}_{T\in\mathcal S}$, with  the presentation
\begin{equation}\RA = \langle z_T \mid [z_T, z_U] = 1 \text{ if }U\subseteq T,\, T\subseteq U, \text{ or }[U,T] = 1\rangle.
\label{eq:raag} 
\end{equation}
By the above, there are homomorphisms $\Phi_N:\RA\to A$ so that $\Phi_N(z_T) = \Delta_T^{2N}$.
The map $\Phi_N$ is injective if and only if the subgroup of $A$ generated by $\{\Delta_{T}^{2N}\}_{T \in \mathcal S}$ is isomorphic to $\RA$. 
For a generator $x_s$, the infinite cyclic subgroup $\langle x_s \rangle$ of $A$ is itself an irreducible, spherical, special Artin subgroup, so the subgroup generated by $\{\Delta_{T}^{2N}\}_{ T \in \mathcal S}$ contains an appropriate $\RAAG$ subgroup found by Crisp and Paris. 
It will turn out that the injectivity (when we can verify it) of $\Phi_N$ will depend on $N$, unlike the original Tits Conjecture. Therefore, we propose the following.

\begin{generalizedtits} 
Let $A$ be an Artin group, $\RA$ the associated $\RAAG$ and $\Phi_N:\RA\to A$ the homomorphism defined above. Then $\Phi_N$ is injective for some $N$. 
\end{generalizedtits}

If we can verify that a specific homomorphism $\Phi_k$ is injective, then we say that $A$ satisfies the Generalized Tits Conjecture for $N = k$. The Generalized Tits Conjecture for $N=1$ was conjectured by Davis, Le, and the second author in {\cite[Conjecture 4.9]{dls}}\label{c:dls}. 
This turns out to be too optimistic in general, though we can show it for a reasonably large class of Artin groups. In Example~\ref{ex:braids} we show that $\Phi_1$ is not injective for braid groups on at least $4$ strands. However, a theorem of Koberda (Theorem~\ref{t:koberda}) implies that braid groups satisfy the Generalized Tits Conjecture (for some $N > > 0$).  

We claim this is a good generalization of the Tits Conjecture. 
As evidence, recall that there is a simplicial complex $L$ ($=L(W,S)$), called the \emph{nerve}.
Its vertex set is $S$ and a subset $T \subseteq S$ spans a simplex of $L$ if and only if $T$ is spherical.
Davis and Huang showed in \cite{dh16} that the nerve $L'$ of the $\RAAG$ $\RA$ defined above is a partial barycentric subdivision of the nerve $L$. 
In particular the nerve of the Artin group is homeomorphic to the nerve of the alleged $\RAAG$ subgroup. 
This is desirable since many topological properties of this nerve are related to  algebraic properties of the Artin group. 
For example, contingent on the $K(\pi,1)$-conjecture, $\RA$ and $A$ will have the same cohomological dimension, their compactly supported/$\ell^2$-cohomology will be nontrivial in the same dimensions, etc. 
In this sense, the alleged subgroup is as ``large" a $\RAAG$ subgroup as one can expect to find in $A$. 
Davis and Huang in \cite{dh16} were interested in determining the minimal dimensional manifold model for a classifying space $BA$ (see also Le's thesis~\cite{le}). These partial subdivisions $L'$ appeared earlier in \cite{djs1}, \cite{djs2}.

\subsection{Results}

We show the Generalized Tits Conjecture holds for Charney's class of locally reducible Artin groups. This includes all 2-dimensional Artin groups and Artin groups with $m_{st} \ne 3$ for all $s,t \in S$. The irreducible spherical Artin subgroups correspond to edges of the nerve, so the only new generators that we are considering come from centers of the Artin subgroups generated by $\langle s,t \rangle$ for $m_{st} \ge 3$. We can show the following:

\begin{Theorem}\label{t:main2}
Artin groups with $m_{st} \ne 3$ for all $s,t \in S$ satisfy the Generalized Tits Conjecture with $N=1$.
Locally reducible Artin groups satisfy the Generalized Tits Conjecture with $N=2$. 
\end{Theorem}

We also prove that the Generalized Tits Conjecture holds for large $N$ for the following spherical Artin groups. 

\begin{Theorem}\label{t:main}
The Generalized Tits Conjecture holds with $N$ sufficiently large for all spherical Artin groups except for those of type $E_n$.
\end{Theorem}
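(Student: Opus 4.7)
The plan is to reduce each allowed irreducible spherical type to Koberda's theorem (Theorem~\ref{t:koberda}). For each $\Gamma\in\{A_n,B_n,D_n,F_4,H_3,H_4,I_2(m)\}$, I would construct a homomorphism $\rho: A_\Gamma\to\Mod(\Sigma)$ into the mapping class group of a (possibly non-orientable) surface $\Sigma$, and choose, for each irreducible spherical special subset $T\subseteq S$, an essential simple closed curve $c_T\subset\Sigma$ so that $\rho(\Delta_T^2)$ is a nonzero power of the Dehn twist along $c_T$. The crucial geometric content is the intersection dictionary
\[
i(c_T,c_U)=0 \iff T\subseteq U,\ U\subseteq T,\text{ or }[T,U]=1,
\]
where $i(\cdot,\cdot)$ is geometric intersection number. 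If this dictionary holds, the defining graph of $\RA$ in~(\ref{eq:raag}) is precisely the disjointness graph of the family $\{c_T\}$, and Koberda's theorem supplies an $N$ such that the corresponding $N$-th powers of Dehn twists generate a right-angled Artin group with exactly that defining graph; pulling back forces $\Phi_N$ to be injective for all sufficiently large $N$.

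The construction of $\rho$ and the curves $c_T$ would be carried out type by type. For $\Gamma=A_n$ the representation is the standard braid action on the $(n+1)$-punctured disk, with $c_T$ the boundary of a regular neighborhood of the punctures indexed by $T$; this reduces the case to combining the authors' Example~\ref{ex:braids} with Koberda's theorem. For $B_n$ I would use the faithful action on the disk with one marked inner boundary and $n$ interior punctures, taking $c_T$ again to enclose the punctures of $T$; for $D_n$ I would restrict this representation along the index-two inclusion $A_{D_n}\hookrightarrow A_{B_n}$ and correct the curves accordingly. For the sporadic types $F_4$, $H_3$, $H_4$ I would use the known ``admissible folding'' realizations of these Artin groups as fixed subgroups of diagram automorphisms of ADE-type Artin groups, together with Crisp--Paris style embeddings into mapping class groups of non-orientable surfaces, transferring the enclosing-curve description along the fold. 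The case $\Gamma=I_2(m)$ is already subsumed in Theorem~\ref{t:main3} and presents no new issue.

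The principal obstacle is verifying the intersection dictionary, and this is what makes the argument type-specific rather than uniform. The $\Leftarrow$ direction reflects the standard commutation criterion for Dehn twists and follows immediately from the nesting/disjointness of the enclosing regions. The $\Rightarrow$ direction is more delicate: if $T$ and $U$ are neither nested nor commuting, then the enclosing regions genuinely overlap, and one must check that the boundary curves necessarily intersect essentially. This is direct for $A$, $B$, and $D$, but for the folded sporadic types it requires careful bookkeeping of how an enclosing curve upstairs descends to the folded surface without accidentally becoming disjoint from another descended curve. The exclusion of type $E_n$ is for precisely this reason: the lattice of irreducible spherical parabolics in $E_6,E_7,E_8$ is substantially richer, and there is at present no evident surface model whose enclosing curves realize the required disjointness/intersection pattern. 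Once the dictionary is established for a given allowed type, Koberda's theorem concludes the proof with no further input.
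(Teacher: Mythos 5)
The proposal rests on a premise that fails in the key cases: you assume that $\rho(\Delta_T^2)$ is a nonzero power of a Dehn twist along a \emph{single} curve $c_T$. This is true for type $A_n$ (viewing the braid group as the mapping class group of a punctured disk, so $\Delta_T^2$ encloses a block of punctures), but for type $D_n$ --- and indeed for the Perron--Vannier representation in every type where some $\Sigma_T$ has disconnected boundary --- the element $\Delta_T^2$ maps to a \emph{multitwist} supported on several boundary curves. For $D_n$ the subsurface $\Sigma_T$ typically has two or three boundary components, and Remark~\ref{rem:deltas boundaries} makes this explicit. Once the generators are multitwists rather than twists, your intersection dictionary $i(c_T,c_U)=0 \iff T\subseteq U,\ U\subseteq T,\text{ or }[T,U]=1$ no longer has a well-defined left side, and Koberda's theorem alone does \emph{not} conclude: even if high powers of twists about the individual boundary curves generate a RAAG $\RA_{\mathcal C}$, the map $\RA\to\RA_{\mathcal C}$ sending each generator to a product of commuting generators need not be injective (see Example~\ref{e:badpp}, where a product of commuting RAAG generators destroys the expected RAAG structure). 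This is exactly the obstacle the paper's Section~\ref{s:raags} is built to overcome: Koberda's Property~PP handles $A_n$, but for $D_n$ Property~PP itself fails (Figure~\ref{f:noPP}), and the ``generalized Property~PP'' (Theorem~\ref{thm:generalized PP}) is the genuinely new ingredient needed to close the argument. Your proposal contains no analogue of this step.

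A few further issues. The passage ``restrict this representation along the index-two inclusion $A_{D_n}\hookrightarrow A_{B_n}$'' invokes a Coxeter-group fact ($W_{D_n}$ is index two in $W_{B_n}$) that does not transfer to an inclusion of Artin groups in the needed direction; the folding embeddings used in the paper go the other way, $A_{B_n}\hookrightarrow A_{D_{n+1}}$ or $A_{A_{2n-1}}$ (Lemma~\ref{l:cases}), and the paper deduces $B_n$ from $D_{n+1}$ and $A_{2n-1}$, not $D_n$ from $B_n$. You also cite Example~\ref{ex:braids} as an ingredient in the positive direction for $A_n$; that example is in fact a \emph{counterexample} showing $\Phi_1$ fails for large braid groups, not a lemma one can ``combine with Koberda's theorem.'' Finally, your explanation of why $E_n$ is excluded (a richer lattice of parabolics) is not the paper's reason: the Perron--Vannier representation of $E_7$ and $E_8$ has explicit elements of $\Ima(\Phi_m)$ in its kernel (Subsection~\ref{ss:e7}), so the representation is simply not faithful enough, regardless of how one chooses curves.
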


Our methods do not work for these remaining exotic cases. The hardest part of  Theorem \ref{t:main} is confirming the conjecture for the Artin groups of type $D_n$ (the conjecture for Artin groups of type $B_n$ also follows from Koberda's result). For technical reasons we have to assume that $N$ is even, though we suspect it works for general large $N$. 

\subsection{Applications}
Here are some immediate applications of our results. The rough moral here is that if an Artin group satisfies the Generalized Tits Conjecture, then its subgroups are as complicated as the subgroups of the corresponding $\RAAG$. 
In particular, we give a new proof of Wise's result that the spherical Artin group $A$ of type $H_3$ is incoherent, and can show that $A$ (along with many other Artin groups) contains a closed hyperbolic surface subgroup, answering questions of Gordon-Long-Reid \cite{glr}. 
The advantage of our argument is that the same proof works for both questions; the $\RAAG$ subgroup of $A$ has a nerve which is a cone on a pentagon, and it is easy to see this subgroup is incoherent and contains hyperbolic surface subgroups.

\subsection{Outlines of the proofs}

Our methods of proof in the case of spherical Artin group and locally reducible Artin groups are very different. 
For the locally reducible Artin groups, we use similar methods to \cite{charney}. Charney showed that the Deligne complexes of locally reducible Artin groups are CAT(0). She then constructed a cube complex with an action of the predicted RAAG, and showed that it isometrically embeds in the Deligne complex, using arguments from CAT(0) geometry. The crucial case to understand is the dihedral Artin groups $A_{2m}$ (i.e.\ $(W,S)$ is a dihedral group $D_{2m}$), since these appear in the links of vertices in the larger Deligne complex. 
The RAAG that we consider is larger, so we are trying to isometrically embed a larger complex into the Deligne complex. Again, we need to understand the dihedral Artin groups. In this case, our complex looks roughly like a $\mathbb Z's$ worth of Charney's complex, and we need to show that the pieces embed pairwise  orthogonally in the Deligne complex of the dihedral Artin group.

For the spherical Artin groups, we follow Crisp and Paris. We start with a representation from the Artin group into the mapping class group of a surface $\Sigma$. For spherical Artin groups of type $A_n, D_n$ and $E_n$, these representations are classical and due to Perron and Vannier \cite{pv}. The generators of the Artin groups map to Dehn twists around simple closed curves, and powers of the centers of irreducible spherical Artin subgroups map to Dehn twists about the boundary curves of connected subsurfaces of $\Sigma$. Then Koberda's result implies that high powers of Dehn twists around this collection of curves generates a $\RAAG$ subgroup of the mapping class group of $\Sigma$. The reader might suspect that we are done now, and the proof trivially follows from Koberda's theorem (as we initially thought). However, the boundary of these subsurfaces is not necessarily connected, so each center maps to a product of Dehn twists about disjoint simple closed curves in $\Sigma$. Therefore, we have to study the following question:

\begin{Question}
Let $\RA$ be a $\RAAG$. Let $\{w_i\}$ be a collection of elements of $\RA$, where each $w_i$ is a product of commuting generators of $\RA$. Is the subgroup of $\RA$ generated by $w_i$ the (obvious) $\RAAG$?
\end{Question}

This turns out to be subtle, and was also considered in \cite{cp} and \cite{koberda}. Koberda and Crisp-Paris gave different conditions on the $\{w_i\}$ which guaranteed a positive answer to the above question. These conditions fail for the system of curves produced by the Perron-Vannier representation of the Artin groups of type $D_n$. Our main work in this case is to generalize Koberda's condition to a condition that the curves in this system satisfy. This new condition may be of independent interest.

 Unfortunately, our conditions do not work for the RAAG and subwords produced from the Perron-Vannier representation of the Artin groups of type $E_n$. Even worse, for $E_7$ and $E_8$ we can find words in the alleged $\RAAG$ subgroup which are in the kernel of the representation, see Subsection \ref{ss:e7}. One can check by hand that these words correspond to nontrivial elements of the Artin group, so these mapping class group representations are not faithful enough to be used to verify the conjecture, even for spherical Artin groups. 


\subsection{Organization of the paper}
In Section \ref{s:coxartin}, we give some background on Coxeter and Artin groups. Section \ref{s:lr} is devoted to proving Theorems \ref{t:main2} and \ref{t:main3}. In Section \ref{s:raags} we study RAAG subgroups of RAAG's,  and it can be read independently of the rest of the paper. In Sections \ref{s:small} and \ref{s:folding} we use this to prove Theorem \ref{t:main} first in the small type case (i.e.\ where all $m_{st}\leq 3$), and then for the remaining cases. Section \ref{s:applications} discusses some applications.

\begin{Acknowledgements}
The second author wishes to thank Mike Davis and Giang Le for very helpful discussions. We also thank the anonymous referee for their comments and suggestions.
This material is based upon work done while the second author was supported by the National Science Foundation under Award No.\ 1704364. 
The first author was supported by the National Science Foundation
under Grant No.\ DMS-1928930, as a participant in a program hosted
by the Mathematical Sciences Research Institute in Berkeley, California, during
the Fall 2020 semester, and the National Science Foundation Grant No.\ DMS-2105548.
\end{Acknowledgements}

\section{Coxeter groups and Artin groups}\label{s:coxartin}

Let $(W,S)$ be a Coxeter system, and let $A$ be the corresponding Artin group. 
There is a canonical surjection $p:A \rightarrow W$ which sends $x_s$ to $s$. The kernel of $p$ is called the \emph{pure Artin group} and denoted $PA$. It is obviously finite index in $A$ if and only if $W$ is a finite Coxeter group. 
There is also a canonical set-theoretic section $\sigma: W \rightarrow A$ of $p$ which takes a reduced positive word $w$ in $W$ to the same word in $A$. It follows from Tits' solution to the word problem for Coxeter groups that this does not depend on the choice of reduced expression for $w$.

\subsection{Coxeter diagrams}
When there are many commuting generators, Coxeter groups and Artin groups can be efficiently described in terms of their \emph{Coxeter diagrams}. 
Given a Coxeter system $(W,S)$, we consider a graph $\gG$ whose vertex set is the set of standard generators $S$, and whose edges correspond to the pairs of generators that do not commute, i.e.\ there is an edge between $s$ and $t$ if and only if $s \ne t$ and $m_{st} \ne 2$. If $m_{st} \in \{4,5,6,\dots\} \cup \{\infty\}$, then the edge between $s$ and $t$ is labeled with $m_{st}$. Otherwise, i.e.\ when $m_{st} = 3$, the edge has no label. Given such a graph $\gG$, we will denote the corresponding Artin group by $A_\gG$.

Note that if the Coxeter graph $\Gamma$ has multiple connected components $\gG_i$, then the Artin group $A_\gG$ splits as the direct product $\prod_i A_{\gG_i}$. If the Coxeter graph is connected, we say that $A_\gG$ is an \emph{irreducible} Artin group. 
We say an Artin group is \emph{small-type} if $m_{st} \in \{2,3\}$ for all $s,t \in S$. These correspond to Coxeter diagrams which are unlabeled graphs (where there are no loops or multiple edges). The finite Coxeter groups with connected Coxeter diagram were classified by Coxeter, and correspond to the Coxeter graphs in Figure~\ref{fig:fincox}. The small type irreducible spherical Artin groups therefore split into two infinite families; type $A_n$ and $D_n$, and three exotic cases $E_6, E_7$ and $E_8$. Each of the other spherical Artin groups injects into a product of small-type spherical Artin groups, see \cite{crisp} or Section \ref{s:folding}.

\subsection{Fundamental elements and Coxeter elements}

Let $(W,S)$ be a Coxeter system with $W$ finite. A \emph{Coxeter element} of $W$ is a product of all the generators of $S$, in any order, where each generator appears exactly once in the product. Different orderings produce conjugate Coxeter elements \cite[Chap V.6.1]{bourbaki}.
The \emph{Coxeter number} $h$ of $(W,S)$ is the order of a (any) Coxeter element in $W$.

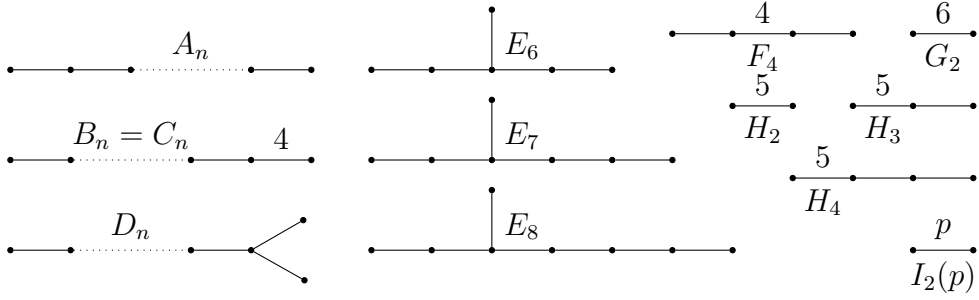
\begin{figure}
\centering
\begin{tikzpicture}[scale = .8]

\begin{scope}
\node[circle, draw, fill, inner sep = 0pt,minimum width = 2pt] (a) at (0,0) {};
\node[circle, draw, fill, inner sep = 0pt,minimum width = 2pt] (b) at (1,0) {};
\node[circle, draw, fill, inner sep = 0pt,minimum width = 2pt] (c) at (2,0) {};
\node[circle, draw, fill, inner sep = 0pt,minimum width = 2pt] (d) at (4,0) {};
\node[circle, draw, fill, inner sep = 0pt,minimum width = 2pt] (e) at (5,0) {};
\draw (a) -- (b) -- (c);
\draw[dotted] (c) -- (d) node[draw=none,fill=none,midway,above] {$A_n$};
\draw (d) -- (e);
\end{scope}
\begin{scope}[shift={(0,-1.5)}]
\node[circle, draw, fill, inner sep = 0pt,minimum width = 2pt] (a) at (0,0) {};
\node[circle, draw, fill, inner sep = 0pt,minimum width = 2pt] (b) at (1,0) {};
\node[circle, draw, fill, inner sep = 0pt,minimum width = 2pt] (c) at (3,0) {};
\node[circle, draw, fill, inner sep = 0pt,minimum width = 2pt] (d) at (4,0) {};
\node[circle, draw, fill, inner sep = 0pt,minimum width = 2pt] (e) at (5,0) {};
\draw (a) -- (b);
\draw[dotted] (b) -- (c) node[draw=none,fill=none,midway,above] {$B_n=C_n$};
\draw (c) -- (d);
\draw (d) -- (e) node[draw=none,fill=none,midway,above] {$4$};
\end{scope}
\begin{scope}[shift={(0, -3)}]
\node[circle, draw, fill, inner sep = 0pt,minimum width = 2pt] (a) at (0,0) {};
\node[circle, draw, fill, inner sep = 0pt,minimum width = 2pt] (b) at (1,0) {};
\node[circle, draw, fill, inner sep = 0pt,minimum width = 2pt] (c) at (3,0) {};
\node[circle, draw, fill, inner sep = 0pt,minimum width = 2pt] (d) at (4,0) {};
\node[circle, draw, fill, inner sep = 0pt,minimum width = 2pt] (e) at (4.866,0.5) {};
\node[circle, draw, fill, inner sep = 0pt,minimum width = 2pt] (f) at (4.886,-0.5) {};
\draw (a) -- (b);
\draw[dotted] (b) -- (c) node[draw=none,fill=none,midway,above] {$D_n$};
\draw (c) --(d) -- (e);
\draw (d) -- (f);
\end{scope}
\begin{scope}[shift={(6,0)}]
\node[circle, draw, fill, inner sep = 0pt,minimum width = 2pt] (a) at (0,0) {};
\node[circle, draw, fill, inner sep = 0pt,minimum width = 2pt] (b) at (1,0) {};
\node[circle, draw, fill, inner sep = 0pt,minimum width = 2pt] (c) at (2,0) {};
\node[circle, draw, fill, inner sep = 0pt,minimum width = 2pt] (d) at (3,0) {};
\node[circle, draw, fill, inner sep = 0pt,minimum width = 2pt] (e) at (4,0) {};
\node[circle, draw, fill, inner sep = 0pt,minimum width = 2pt] (f) at (2,1) {};
\draw (a) -- (b) -- (c) -- (f);
\draw (c) -- (d) node[draw=none,fill=none,midway,above] {$E_6$};
\draw (d) -- (e);
\end{scope}
\begin{scope}[shift={(6,-1.5)}]
\node[circle, draw, fill, inner sep = 0pt,minimum width = 2pt] (a) at (0,0) {};
\node[circle, draw, fill, inner sep = 0pt,minimum width = 2pt] (b) at (1,0) {};
\node[circle, draw, fill, inner sep = 0pt,minimum width = 2pt] (c) at (2,0) {};
\node[circle, draw, fill, inner sep = 0pt,minimum width = 2pt] (d) at (3,0) {};
\node[circle, draw, fill, inner sep = 0pt,minimum width = 2pt] (e) at (4,0) {};
\node[circle, draw, fill, inner sep = 0pt,minimum width = 2pt] (g) at (5,0) {};
\node[circle, draw, fill, inner sep = 0pt,minimum width = 2pt] (f) at (2,1) {};
\draw (a) -- (b) -- (c) -- (f);
\draw (c) -- (d) node[draw=none,fill=none,midway,above] {$E_7$};
\draw (d) -- (e) -- (g);
\end{scope}
\begin{scope}[shift={(6,-3)}]
\node[circle, draw, fill, inner sep = 0pt,minimum width = 2pt] (a) at (0,0) {};
\node[circle, draw, fill, inner sep = 0pt,minimum width = 2pt] (b) at (1,0) {};
\node[circle, draw, fill, inner sep = 0pt,minimum width = 2pt] (c) at (2,0) {};
\node[circle, draw, fill, inner sep = 0pt,minimum width = 2pt] (d) at (3,0) {};
\node[circle, draw, fill, inner sep = 0pt,minimum width = 2pt] (e) at (4,0) {};
\node[circle, draw, fill, inner sep = 0pt,minimum width = 2pt] (g) at (5,0) {};
\node[circle, draw, fill, inner sep = 0pt,minimum width = 2pt] (h) at (6,0) {};
\node[circle, draw, fill, inner sep = 0pt,minimum width = 2pt] (f) at (2,1) {};
\draw (a) -- (b) -- (c) -- (f);
\draw (c) -- (d) node[draw=none,fill=none,midway,above] {$E_8$};
\draw (d) -- (e) -- (g) -- (h) ;
\end{scope}
\begin{scope}[shift={(11,0.6)}]
\node[circle, draw, fill, inner sep = 0pt,minimum width = 2pt] (a) at (0,0) {};
\node[circle, draw, fill, inner sep = 0pt,minimum width = 2pt] (b) at (1,0) {};
\node[circle, draw, fill, inner sep = 0pt,minimum width = 2pt] (c) at (2,0) {};
\node[circle, draw, fill, inner sep = 0pt,minimum width = 2pt] (d) at (3,0) {};
\draw (a) -- (b);
\draw (b) -- (c) node[draw=none,fill=none,midway,above] {$4$} node[draw=none,fill=none,midway,below] {$F_4$};
\draw (c) -- (d);
\end{scope}
\begin{scope}[shift={(15,0.6)}]
\node[circle, draw, fill, inner sep = 0pt,minimum width = 2pt] (a) at (0,0) {};
\node[circle, draw, fill, inner sep = 0pt,minimum width = 2pt] (b) at (1,0) {};
\draw (a) -- (b) node[draw=none,fill=none,midway,above] {$6$} node[draw=none,fill=none,midway,below] {$G_2$};
\end{scope}
\begin{scope}[shift={(12,-0.6)}]
\node[circle, draw, fill, inner sep = 0pt,minimum width = 2pt] (a) at (0,0) {};
\node[circle, draw, fill, inner sep = 0pt,minimum width = 2pt] (b) at (1,0) {};
\draw (a) -- (b) node[draw=none,fill=none,midway,above] {$5$} node[draw=none,fill=none,midway,below] {$H_2$};
\end{scope}
\begin{scope}[shift={(14,-0.6)}]
\node[circle, draw, fill, inner sep = 0pt,minimum width = 2pt] (a) at (0,0) {};
\node[circle, draw, fill, inner sep = 0pt,minimum width = 2pt] (b) at (1,0) {};
\node[circle, draw, fill, inner sep = 0pt,minimum width = 2pt] (c) at (2,0) {};
\draw (a) -- (b) node[draw=none,fill=none,midway,above] {$5$} node[draw=none,fill=none,midway,below] {$H_3$};
\draw (b) -- (c);
\end{scope}
\begin{scope}[shift={(13,-1.8)}]
\node[circle, draw, fill, inner sep = 0pt,minimum width = 2pt] (a) at (0,0) {};
\node[circle, draw, fill, inner sep = 0pt,minimum width = 2pt] (b) at (1,0) {};
\node[circle, draw, fill, inner sep = 0pt,minimum width = 2pt] (c) at (2,0) {};
\node[circle, draw, fill, inner sep = 0pt,minimum width = 2pt] (d) at (3,0) {};
\draw (a) -- (b) node[draw=none,fill=none,midway,above] {$5$} node[draw=none,fill=none,midway,below] {$H_4$};
\draw (b) -- (c) -- (d);
\end{scope}
\begin{scope}[shift={(15,-3)}]
\node[circle, draw, fill, inner sep = 0pt,minimum width = 2pt] (a) at (0,0) {};
\node[circle, draw, fill, inner sep = 0pt,minimum width = 2pt] (b) at (1,0) {};
\draw (a) -- (b) node[draw=none,fill=none,midway,above] {$p$} node[draw=none,fill=none,midway,below] {$I_2(p)$};
\end{scope}

\end{tikzpicture}
\caption{Coxeter graphs of the irreducible finite Coxeter groups.}\label{fig:fincox}

\end{figure}

A \emph{reflection} in $W$ is a conjugate of an element of $S$. 
Each finite Coxeter group has a unique \emph{longest element} $w_S$. This can be characterized as the unique element for which $\ell(sw_S) = \ell(w_Ss) < \ell(w_S)$ for all $s \in S$, where $\ell(w)$ is the minimal length of a representative for $w$. The length of $w_S$ is precisely the number of reflections in $W$. Conjugation by $w_S$ induces an involution of the Coxeter diagram $\gG_S$, in the sense that generators are sent to generators and the relations are preserved. It follows from this that $w_s$ is in the center of $W$ if and only if this involution is trivial. This involution happens to be nontrivial if and only if the Coxeter group is of type $A_n$, $D_n$ with $n$ odd, $E_6$ or $I_2(p)$ for $p$ odd. 

The image of $w_S$ in $A$ under the section $\sigma: W \rightarrow A$ will be called the \emph{fundamental element}  of $A$, and we will denote it by $\Delta$. Each spherical Artin group has an infinite cyclic center which is generated by either $\Delta$ or $\Delta^2$ (depending as above whether the Coxeter group has a nontrivial center). For simplicity, we will only deal with the squares $\Delta^2$, which generate the center of the pure Artin groups, see \cite[Thm 4.7]{ot}. We record the following lemma in \cite{bs}, which will be used in Section~\ref{s:folding}.

\begin{lemma}\label{l:bs}
Let $A$ be a spherical Artin group. We have that $\Delta^2 = \gs(c)^h$, where $c$ is any Coxeter element and $h$ is the Coxeter number.
\end{lemma}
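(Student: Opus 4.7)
The plan is to exhibit $\gs(c)^h$ and $\Delta^2$ as positive central elements of the pure Artin group $PA$ that have the same positive word length, and then to invoke the description of the center of $PA$ as the infinite cyclic group generated by $\Delta^2$.

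First I would check that both elements lie in $PA$. The projection $p : A\to W$ sends $\Delta$ to $w_S$ with $w_S^2 = 1$, so $\Delta^2\in PA$; and $\gs(c)\mapsto c$ with $c^h = 1$ by the definition of the Coxeter number $h$, so $\gs(c)^h\in PA$. Centrality of $\Delta^2$ is essentially recorded in the excerpt: conjugation by $\Delta$ realizes the graph involution $\delta$ of $S$ induced by $w_S$, and $\delta^2 = \mathrm{id}$, so $\Delta^2$ commutes with every generator $x_s$.

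The main step is to show that $\gs(c)^h$ is also central in $A$. A direct approach exploits the cyclic symmetry of Coxeter expressions: if $c = s_1 s_2 \cdots s_n$, then $x_{s_1}^{-1} \gs(c)\, x_{s_1}$ equals the $\gs$-lift of the cyclic rotation $s_2\cdots s_n s_1$, which is again a Coxeter element. Iterating this around the Coxeter diagram together with the braid relations, one propagates any generator past a full power $\gs(c)^h$ and concludes that $\gs(c)^h$ commutes with every $x_s$. Alternatively, one can appeal to the Garside-theoretic framework of \cite{bs}: in a spherical Artin monoid, any positive element whose image in $W$ is trivial and whose positive length is a multiple of $\ell(w_S)$ is a corresponding power of $\Delta$.

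Finally I would match lengths using the well-defined positive length $\ell$ on the Artin monoid. We have $\ell(\gs(c)) = n := |S|$, hence $\ell(\gs(c)^h) = nh$; and $\ell(\Delta) = \ell(w_S)$ equals the number of positive roots of the root system associated to $(W,S)$, namely $nh/2$, so $\ell(\Delta^2) = nh$. Since $Z(PA) = \langle\Delta^2\rangle$ and $\gs(c)^h$ is positive and central, we have $\gs(c)^h = \Delta^{2k}$ for some $k\geq 1$, and length matching forces $k = 1$. The main obstacle is verifying centrality of $\gs(c)^h$; the remaining steps are standard length bookkeeping within the Garside framework.
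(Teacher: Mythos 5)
First, note that the paper does not prove this lemma at all: it is simply quoted from Brieskorn--Saito \cite{bs}, so there is no in-paper argument to compare yours against. Your framing (show $\sigma(c)^h$ is a positive, central element of $PA$ of length $nh = 2\ell(w_S)$, then invoke $Z(A)\cap PA = \langle\Delta^2\rangle$ and match lengths) is sensible, and the final length bookkeeping is correct. The gap is exactly where you flag it: you never actually establish that $\sigma(c)^h$ is central, and neither of your two suggestions does the job.

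The cyclic-rotation argument is circular. The identity $x_{s_1}^{-1}\sigma(c)x_{s_1} = \sigma(c')$ with $c' = s_2\cdots s_n s_1$ yields $x_{s_1}^{-1}\sigma(c)^h x_{s_1} = \sigma(c')^h$, so to conclude that $\sigma(c)^h$ commutes with $x_{s_1}$ you would need to already know $\sigma(c')^h = \sigma(c)^h$, i.e.\ that $\sigma(c)^h$ does not depend on the choice of Coxeter element --- but that independence is part of what the lemma asserts, and the exponent $h$ is generally not a multiple of $n$, so iterating the rotation does not bring you back to $c$. Your Garside-theoretic fallback is simply false as stated: in $B_3$ (type $A_2$, where $\ell(w_S)=3$) the positive word $s^2t^2s^2$ has length $6 = 2\ell(w_S)$ and maps to the identity in $S_3$, yet by the original Tits Conjecture it is a nontrivial element of the free subgroup $\langle s^2,t^2\rangle$, which meets the center trivially, so $s^2t^2s^2 \neq \Delta^2$. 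The missing ingredient in Brieskorn--Saito is a genuine combinatorial input --- the Steinberg-type theorem that for a bipartite Coxeter element $c = c_1c_2$ (with $c_i$ the product of the generators in one color class of the tree underlying the Coxeter diagram), the alternating word $c_1c_2c_1c_2\cdots$ of length $\ell(w_S)$ is a reduced expression for $w_S$. From this one reads off $\Delta$ as a positive word in $\sigma(c_1),\sigma(c_2)$, deduces $\Delta^2 = \sigma(c_1c_2)^h$ directly (using, in the $h$ odd case, that conjugation by $\Delta$ swaps the two color classes), and only then uses centrality of $\Delta^2$ together with your cyclic-rotation observation to pass to an arbitrary Coxeter element.
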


\subsection{Subdivisions}\label{sec:subdivision}

The \emph{nerve} $L$ of a Coxeter system $(W,S)$ is a simplicial complex $L$ with vertex set $S$ where a subset $T\subseteq S$ spans a simplex of $L$ if and only if $T$ is spherical. 
Let $\gs$ be a simplex in $L$ corresponding to a spherical subset $T$, and $A_T$ the corresponding spherical special Artin subgroup of $A$. Davis and Huang described a partial barycentric subdivision $\gs_\oslash$ of $\gs$ where the vertices of $\gs_\oslash$ correspond to irreducible subsets of $T$. For $U \subseteq T$, we think of the vertex corresponding to $A_U$ as the barycenter of the associated simplex in $\gs$. There are edges between two vertices $U$ and $U'$ if and only if $U \subseteq U'$, $U' \subseteq U$, or $[U,U']=1$. See Figure \ref{fig:subdivision} for the subdivision corresponding to the braid group on $4$ strands.

In terms of the Coxeter diagram $\gG$, the irreducible spherical Artin subgroups of $A_\gG$ correspond to connected spherical subdiagrams. In this case, there is an edge between two connected subsets $U,T$ of $\gG_\gs$ if and only if $U \subseteq T$, $T \subseteq U$, or $U$ and $T$ have distance $> 2$ in $\gG$.

\begin{figure}
\centering
\begin{tikzpicture}
\begin{scope}
\node[circle, draw, fill, inner sep = 0pt,minimum width = 2pt, label=above:$t$] (t) at (90:1.5) {};
\node[circle, draw, fill, inner sep = 0pt,minimum width = 2pt, label=left:$s$] (s) at (210:1.5) {};
\node[circle, draw, fill, inner sep = 0pt,minimum width = 2pt, label=right:$u$] (u) at (330:1.5) {};
\draw (t) -- (s) node[draw=none,fill=none,midway,left] {$2$};
\draw (s) -- (u) node[draw=none,fill=none,midway,below] {$3$};
\draw (u) -- (t) node[draw=none,fill=none,midway,right] {$3$};
\end{scope}
\begin{scope}[shift={(5,0)}]
\node[circle, draw, fill, inner sep = 0pt,minimum width = 2pt] (stu) at (0,0) {};
\node[circle, draw, fill, inner sep = 0pt,minimum width = 2pt, label=above:$t$] (t) at (90:1.5) {};
\node[circle, draw, fill, inner sep = 0pt,minimum width = 2pt, label=left:$s$] (s) at (210:1.5) {};
\node[circle, draw, fill, inner sep = 0pt,minimum width = 2pt, label=right:$u$] (u) at (330:1.5) {};
\node[circle, draw, fill, inner sep = 0pt,minimum width = 2pt] (tu) at (30:0.75) {};
\node[circle, draw, fill, inner sep = 0pt,minimum width = 2pt] (su) at (270:0.75) {};
\draw (t) -- (s);
\draw (s) -- (u);
\draw (u) -- (t);
\draw (stu) -- (tu);
\draw (stu) -- (su);
\draw (stu) -- (t);
\draw (stu) -- (u);
\draw (stu) -- (s);
\end{scope}
\end{tikzpicture}

\caption{The subdivision $\gs_{\oslash}$ for the braid group on four strands.}\label{fig:subdivision}
\end{figure}
The subdivision $\gs_{\oslash}$ can be defined as the flag completion of this graph. Of course, it is not obvious with this definition that this is a subdivision of $\gs$; Davis and Huang provide an alternative description of $\gs_\oslash$ which obviously produces a subdivision of $\gs$, and show that it is a flag complex with the $1$-skeleton described above.
In either case, these subdivisions $\gs_{\oslash}$ fit together to give a subdivision $L_\oslash$ of $L$. 
The simplicial complex $L_{\oslash}$ is the nerve of the $\RAAG$ $\RA$ described in the introduction. 

In the next subsection, we verify that $\Phi_N: \RA \rightarrow A$ is injective when restricted the free abelian subgroups corresponding to simplices of the nerve $L_{\oslash}$. This can be seen from looking at the abelianization of $PA$. We will also need an explicit description of a natural basis for $H_1(PA,\zz)$ in the locally reducible case.

\subsection{Abelianization of pure spherical Artin groups}

Each finite Coxeter group $W$ acts on $\mathbb R^n$ by linear reflections, where $n$ is the number of elements of $S$. Complexifying this action gives a group action on $\mathbb C^n$ by linear reflections. The complement of the reflecting hyperplanes, denoted by $M(W)$, has $\pi_1(M(W)) = PA$, where $A$ is the associated Artin group.  

The Coxeter group acts freely on $M(W)$, and $\pi_1(M(W)/W) = A$. Deligne showed that $M(W)$ is aspherical \cite{deligne}, so in particular $H_1(PA,\mathbb Z)$ is isomorphic to $H_1(M(W),\mathbb Z)$. It is easy to see that $H_1(M(W),\mathbb Z)$ is isomorphic to the free abelian group $\zz^R$, where $R$ is the set of reflections in $W$, or the set of hyperplanes in this arrangement \cite{ot} (the complement deformation retracts to its intersection with $S^{2n-1}$, which is homeomorphic to $S^{2n-1} - \bigcup_R S^{2n-3}$).  Given a spherical subset $T$ of $S$, let $R_T \subseteq R$ be the set of reflections in $W_T$. 

Let $\{e_r\}$ be the standard basis of $\mathbb{Z}^R$.  
For each element $s \in S$, the element $x_s^2$ in $PA$ corresponds to a loop around the hyperplane corresponding to $s$ in $\mathbb C^n$. It turns out that the class of $x_s^2$ is precisely $e_s$ for each $s \in S$. Any $r \in R$ is a conjugate of an element $s \in S$, i.e.\ $r = wsw^{-1}$ for some $w \in W$. Let $a_w$ be any element of $A$ that projects to $w$ under $p:A \rightarrow W$. Then $a_w x_s^2 a_w^{-1}$ is in $PA$, and its image in $H_1(PA,\zz)$ is precisely $e_r$. 

\begin{lemma}[{\cite[Lem 2.2]{dh16}}]\label{l:homology}
Suppose that $T$ is spherical. Let $e_T$ be the image of $\Delta_T^2$ in $H_1(PA_T,\mathbb Z)$. Then $$e_T = \sum_{t \in R_T} e_t.$$ 
\end{lemma}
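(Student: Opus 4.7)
The idea is to compute $[\Delta_T^2] \in H_1(PA_T, \mathbb{Z})$ topologically, using the asphericity of $M(W_T)$ (Deligne) to identify $H_1(PA_T, \mathbb{Z}) \cong H_1(M(W_T), \mathbb{Z}) = \bigoplus_{r \in R_T} \mathbb{Z} e_r$, and then computing the relevant linking numbers directly.

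The first step is to reduce to the case $W_T$ irreducible. If $T = T_1 \sqcup \cdots \sqcup T_k$ is the decomposition into irreducible components, then $A_T = \prod_i A_{T_i}$, $PA_T = \prod_i PA_{T_i}$, $\Delta_T^2 = \prod_i \Delta_{T_i}^2$, and $R_T = \bigsqcup_i R_{T_i}$, so the statement is additive over the factors and it suffices to treat irreducible $W_T$. For such $T$, the element $\Delta_T^2$ generates the center of $PA_T$, and (as recalled in the introduction) this center is realized topologically as $\pi_1$ of a $\mathbb{C}^*$-fiber of the projectivization $M(W_T) \to \mathbb{P}M(W_T)$. Thus, for any basepoint $x_0 \in M(W_T)$, the loop $\gamma(t) = e^{2\pi i t} x_0$, $t \in [0,1]$, represents $\Delta_T^2$ in $\pi_1(M(W_T)) = PA_T$, up to a sign fixed by orientation conventions.

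To read off $[\gamma] \in H_1$, I would pair against the Orlik--Solomon dual basis $\omega_r := \frac{1}{2\pi i}\, d\log \alpha_r \in H^1(M(W_T), \mathbb{Z})$, where $\alpha_r$ is a $\mathbb{C}$-linear form defining the hyperplane $H_r$. Since $\alpha_r$ is linear and $\alpha_r(x_0) \neq 0$, we have $\alpha_r(e^{2\pi i t} x_0) = e^{2\pi i t} \alpha_r(x_0)$, so
$$\langle [\gamma], \omega_r \rangle \;=\; \int_0^1 \frac{1}{2\pi i}\, \frac{d}{dt}\log \alpha_r(\gamma(t))\, dt \;=\; 1$$
for every $r \in R_T$; hence $[\gamma] = \sum_{r \in R_T} e_r$, as claimed. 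The only genuine obstacle is matching signs: one must check that, with the algebraic definition of $\Delta_T^2$ (as the square of the positive lift of the longest element) and the chosen meridian orientations for the $e_r$, the counterclockwise $\mathbb{C}^*$-loop $\gamma$ represents $+\Delta_T^2$ rather than its inverse; this is standard and is fixed by pinning down conventions consistent with those of the previous subsection.
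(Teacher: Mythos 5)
Your proof is correct, and since the paper only cites this as Lemma 2.2 of Davis--Huang \cite{dh16} without reproducing an argument, there is no internal proof to compare against; your topological route (pairing the $\mathbb{C}^*$-fiber loop against the logarithmic forms $\tfrac{1}{2\pi i}\,d\log\alpha_r$) is the natural and standard one for this kind of statement. The only step worth making explicit beyond the sign convention you already flag is the identification of the $\mathbb{C}^*$-orbit loop with a \emph{generator} of $Z(PA_T)$ (not a proper power of $\Delta_T^2$): this follows from the splitting $M(W_T)\cong \mathbb{C}^*\times \mathbb{P}M(W_T)$ valid for any nonempty central arrangement, which makes $\pi_1(\mathbb{C}^*)$ a direct $\mathbb{Z}$-factor of $PA_T$, so it is all of the (infinite cyclic) center in the irreducible case.
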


It follows from this lemma that given any simplex $\tau$ in $\sigma_{\oslash}$, the image of the elements in $H_1(PA_T,\mathbb Z)$ corresponding to the vertices of $\gt$ are linearly independent, so in particular these form a free abelian subgroup of $PA_T$ (and hence of $PA$) of rank $\dim(\gt)-1$.

Davis and Huang also show that the intersection of these free abelian subgroups is as expected, i.e.\ if $\gs$ and $\gt$ are simplices of $L_{\oslash}$, then the subgroups correspond to $\gs$ and $\gt$ intersect in the subgroup corresponding to $\gs \cap \gt$. This serves as further evidence for the Generalized Tits Conjecture.

\begin{remark}
There are similar configurations of free abelian subgroups for affine hyperplane complements in $\cc^n$, and as far as we know the analogue to the Generalized Tits Conjecture is also open in this case (see \cite[Section 5]{dls}). In this case, the relevant simplicial complex $L$ comes from the intersection poset of the hyperplane arrangement. Each irreducible central subarrangement has an infinite cyclic center, and these combine as above to produce standard free abelian subgroups. Again, it is known that these centers are linearly independent vectors in the first homology group of the arrangement complement. In this case, the relevant partial subdivision of $L$ is the geometric realization of the nested set complex associated to the minimal building set for the arrangement as defined by De Concini and Procesi \cite{dp}. 

\end{remark}

\begin{remark}
The motivation in \cite{dls} behind generalizing the Tits Conjecture was the computation of \emph{action dimension} of $\RAAG$'s in \cite{ados}. This is the minimal dimension of a manifold model of the classifying space $B(\RA)$. This is obviously monotone, in the sense that if $H$ is a subgroup of $G$, then the action dimension of $G$ is greater than the action dimension of $H$. Therefore, if the Generalized Tits Conjecture was true, the action dimension of the Artin group $A$ is larger than the action dimension of $\RA$, and in many cases this would lead to a complete calculation for $A$. On the other hand, combined work of Davis-Huang and Le gave a nearly complete computation for the action dimension of Artin groups (contingent on the $K(\pi,1)$-conjecture) without using the conjecture (it was enough that the standard free abelian subgroups inject into $A$ and intersect as expected) \cite{dh16}, \cite{le}. 
\end{remark}

\subsection{Deligne complex}\label{sec:Deligne complex}

The spherical subsets of $S$ form a poset under inclusion. Let $K$ denote the geometric realization of this poset; $K$ is the cone on the barycentric subdivision $bL$ of the nerve, with the cone vertex corresponding to the empty set $\emptyset$. There is another poset of spherical cosets $aA_T$ of $A$, where $a \in A$ and $A_T$ is a spherical special subgroup, again ordered by inclusion. The geometric realization of this poset is called the \emph{modified Deligne complex} \cite{cd1}, and denoted $D(A)$. The Artin group $A$ acts on $D(A)$ by left multiplication, and $K$ is a strict fundamental domain for this action. The analogous construction in the Coxeter group setting (i.e.\ replace $aA_T$ with $wW_T$) is precisely the Davis complex. 

Here is an alternative description of $D(A)$. 
Given a spherical subset $T$, let $K_{\ge T}$ denote the subcomplex of $K$ spanned by vertices $A_{T'}$ with $T \subseteq T'$. 
Then $$D(A) = A \times K/\sim,$$ where $(a_1,x) \sim (a_2,x)$ if and only if $x$ is in $K_{\ge T}$ and $a_1^{-1}a_2 \in A_T$. We identify $K$ with $1 \times K$. If $A$ is itself a spherical Artin group, the poset of spherical cosets has a maximal element $A$, so the Deligne complex is a cone with cone point $A$. The link of $A$ is a simplicial complex of dimension $|S|-1$, and we denote it by $B(A)$.

Conjecturally, the modified Deligne complex is contractible for all Artin groups. 
It follows from Deligne's work on spherical Artin groups that this would imply the well-known $K(\pi,1)$-conjecture.
Motivated by this, Charney and Davis in \cite{cd1} put two natural piecewise Euclidean metrics on $D(A)$. Note that $D(A)$ has a natural cube complex structure, since the cone on the barycentric subdivision of a simplex is combinatorially isomorphic to a cube. The first metric simply makes each cube isometric to a standard Euclidean cube $[0,1]^n$. It turns out that this metric is CAT(0) if and only if the nerve $L$ is a flag complex (the induced metric on the link of the vertex $\emptyset$ is isometric to $L$ with the all-right spherical metric, so flagness is an obvious necessary condition). 

The second metric on $D(A)$  is called the \emph{Moussong metric}, as it is related to the Moussong metric on the Davis complex, which is always CAT(0). Charney and Davis showed that the Moussong metric on $D(A)$ is CAT(0) for $2$-dimensional Artin groups.
It is still open whether $D(A)$ equipped with the Moussong metric is CAT(0) for all Artin groups.

We now describe this metric for Artin groups corresponding to dihedral groups, which is the only important case for this paper. It is slightly more convenient for us to keep the original cellulation of $D(A)$ as a simplicial complex.
See \cite{charney} or \cite{cd1} for details in the general case. 
The dihedral group $D_{2m}$ acts by linear reflections on $\mathbb R^2$ in the standard way, with strict fundamental domain a simplicial cone. 
Let $x$ be the unique point in this cone whose distance from the two walls of the cone is $1$.  
The convex hull of the orbit of $x$ under $D_{2m}$ is a $2m$-gon with edge lengths $2$. We subdivide the $2m$-gon by coning off the orbit of $x$. The intersection of this convex hull with the simplicial cone is combinatorially isomorphic to $K$, and we give $K$ the inherited Euclidean metric. 
This defines a piecewise Euclidean metric on $D(A)$, which in this case is CAT(0). 
This naturally gives a piecewise spherical metric on $B(A)$, where every edge has length $\pi/2m$. 
This turns out to be a CAT(1) metric, which in this case is equivalent to there being no closed geodesics with length $< 2\pi$. In the $2$-dimensional case, this follows from \cite[Lem 6]{as}.

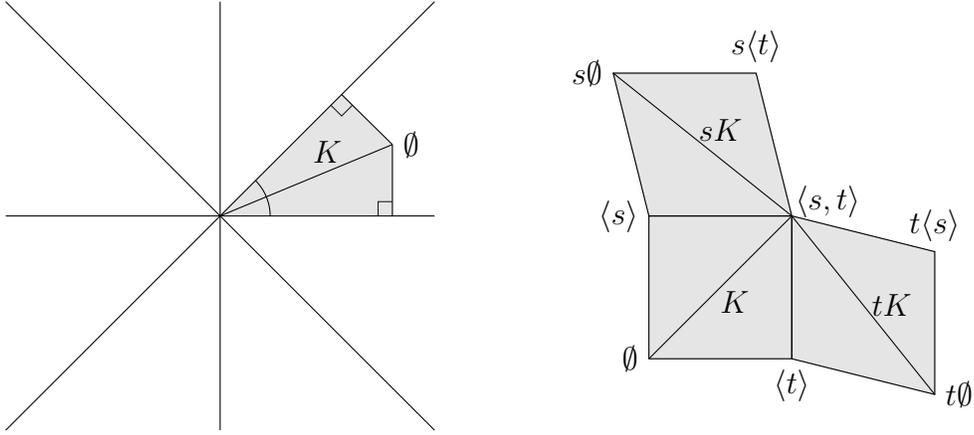
\begin{figure}
\centering
\begin{tikzpicture}[scale = .95]
\draw (-3,-3) -- (3,3);
\draw (3,0) -- (-3,0);
\draw (0,3) -- (0,-3);
\draw (-3,3) -- (3,-3);
\draw (2.41,0) -- (2.41,1);
\draw (2.41,1) -- (1.7,1.7);
\draw (2.41,.2) -- (2.21,.2) -- (2.21,0);
\draw (1.55,1.55) -- (1.7,1.4) -- (1.85,1.55);

\draw[fill = gray, opacity = .2] (2.41,0) -- (2.41,1) -- (1.7, 1.7) -- (0,0) -- (2.41,0);
\node[above, right] at (2.41, 1) {$\emptyset$};
\draw (2.41,1) -- (0,0);
\node at (1.5, .9) {$K$};
\draw (.7,0) arc (0:45:.7);

\begin{scope}[shift = {(6,-2)}];

\draw[fill = gray, opacity = .2] (0,2) -- (2,2) -- (2, 0) -- (0,0) -- (0,2);
\draw[fill = gray, opacity = .2] (2,2) -- (1.5,4) -- (-.5, 4) -- (0,2) -- (2,2);
\draw[fill = gray, opacity = .2] (2,0) -- (4,-.5) -- (4, 1.5) -- (2,2) -- (2,0);

\draw(0,2) -- (2,2) -- (2, 0) -- (0,0) -- (0,2);
\draw (2,2) -- (1.5,4) -- (-.5, 4) -- (0,2) -- (2,2);
\draw (2,0) -- (4,-.5) -- (4, 1.5) -- (2,2) -- (2,0);

\draw (0,0) -- (2,2);
\draw (4,-.5) -- (2,2);
\draw (-.5,4) -- (2,2);

\node at (1.2, .8) {$K$};
\node at (1, 3.2) {$sK$};
\node at (3.4, .75) {$tK$};

\node[below] at (2, 0) {$\langle t \rangle$};
\node[left] at (0, 2) {$\langle s \rangle$};
\node [above, right] at (1.9, 2.2) {$\langle s,t \rangle$ };
\node [below, left] at (0,0) {$\emptyset$};

\node[right] at (4, -.5) {$t\emptyset$};
\node[left] at (-.5, 4) {$s\emptyset$};
\node[above] at (1.5, 4) {$s\langle t \rangle$};
\node[above] at (4, 1.5) {$t\langle s \rangle$};

\end{scope}

\end{tikzpicture}

\caption{The Moussong metric on $K$ for the dihedral Artin group and a small part of the development of the Deligne complex.}
\end{figure}

We now recall the structure of links of vertices in the Deligne complex, again see \cite{cd1} or \cite{charney} for complete details. 
It suffices to consider vertices in $K$ as this is a strict fundamental domain for the action. 
Let $T$ be a spherical subset of $S$, and let $v_T$ be the corresponding vertex in $K$. 
Then we have the following join decomposition $$\Lk_{D(A)}(v_T) = \Lk_{K_{\ge T}}(v_T) \ast B(A_T).$$

The piecewise spherical metric on $\Lk_{D(A)}(v_T)$ is isometric to the spherical join of the metrics on $\Lk_{K_{\ge T}}(v_T)$ and  $B(A_T)$ \cite[Lem 2.2]{charney2}. Recall the spherical join of two piecewise spherical metrics is defined by making each simplex $\gs \ast \gt$ isometric to the simplex in $\mathbb S^{\dim \gs + \dim \gt + 1}$ spanned by $\gs \subseteq S^{\dim \gs}$ and $\gt \subseteq S^{\dim \gt}$, where points in $\gs$ and $\gt$ are all distance $\pi/2$ apart. Charney proves this for the cubical metric on $D(A)$, our links are isometric but have a finer subdivision (which still preserves the join structure).

\subsection{Representations of small type Artin groups inside mapping class groups}\label{sec:reps}

Let $\Sigma$ be an oriented compact surface, possibly with boundary. Let $P = \{P_1,\dots P_n\}$ be a collection of $n$ punctures in the interior of $\Sigma$.
Let $\Homeo^+(\Sigma,P)$ denote the group of orientation-preserving homeomorphisms of $\Sigma$ which fix the boundary pointwise, and which preserve $P$. 
Let $\Homeo^+_0(\Sigma,P)$ denote the connected component of the identity
in $\Homeo^+(\Sigma,P)$. The mapping class group of the pair $(\Sigma,P)$ is defined to be
$$\Mod(\Sigma,P) = \Homeo^+(\Sigma,P)/\Homeo^+_0(\Sigma,P)$$

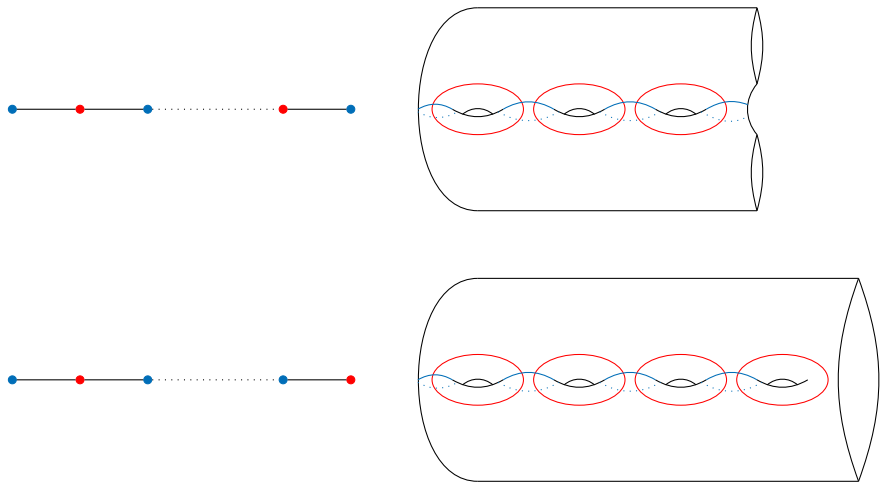
\begin{figure}
\centering
\begin{tikzpicture}[scale = .9]
\begin{scope}[shift={(0,2)}]
\node[circle, draw, fill, inner sep = 0pt,minimum width = 3pt, color=NavyBlue] (a) at (0,0) {};
\node[circle, draw, fill, inner sep = 0pt,minimum width = 3pt, color=red] (b) at (1,0) {};
\node[circle, draw, fill, inner sep = 0pt,minimum width = 3pt, color=NavyBlue] (c) at (2,0) {};
\node[circle, draw, fill, inner sep = 0pt,minimum width = 3pt, color=red] (d) at (4,0) {};
\node[circle, draw, fill, inner sep = 0pt,minimum width = 3pt, color=NavyBlue] (e) at (5,0) {};
\draw (a) -- (b) -- (c);
\draw[dotted] (c) -- (d) node[draw=none,fill=none,midway,above] {};
\draw (d) -- (e);
\end{scope}
\begin{scope}[shift={(0,-2)}]
\node[circle, draw, fill, inner sep = 0pt,minimum width = 3pt, color=NavyBlue] (a) at (0,0) {};
\node[circle, draw, fill, inner sep = 0pt,minimum width = 3pt, color=red] (b) at (1,0) {};
\node[circle, draw, fill, inner sep = 0pt,minimum width = 3pt, color=NavyBlue] (c) at (2,0) {};
\node[circle, draw, fill, inner sep = 0pt,minimum width = 3pt, color=NavyBlue] (d) at (4,0) {};
\node[circle, draw, fill, inner sep = 0pt,minimum width = 3pt, color=red] (e) at (5,0) {};
\draw (a) -- (b) -- (c);
\draw[dotted] (c) -- (d) node[draw=none,fill=none,midway,above] {};
\draw (d) -- (e);
\end{scope}

\begin{scope}[shift = {(5,-2)}, scale = .75];
\draw (2.5,-2) to[out=180, in=180] (2.5,2);
\draw (2.5,-2) to (10, -2);
\draw (2.5,2) to (10, 2);
\draw (10,-2) to[out = 110, in = 250] (10, 2);
\draw (10,-2) to[out = 70, in = -70] (10, 2);
\draw (2, -0) to[out=-30, in=210] (3, 0);
\draw (2.2,-0.1) to[out=40, in=140] (2.8, -0.1);

\draw (4, -0) to[out=-30, in=210] (5, 0);
\draw (4.2,-0.1) to[out=40, in=140] (4.8, -0.1);

\draw (6, -0) to[out=-30, in=210] (7, 0);
\draw (6.2,-0.1) to[out=40, in=140] (6.8, -0.1);

\draw (8, -0) to[out=-30, in=210] (9, 0);
\draw (8.2,-0.1) to[out=40, in=140] (8.8, -0.1);

\draw[red] (2.5, 0) ellipse (0.9 and 0.5);

\draw[red] (4.5, 0) ellipse (0.9 and 0.5);

\draw[red] (6.5, 0) ellipse (0.9 and 0.5);

\draw[red] (8.5, 0) ellipse (0.9 and 0.5);

\draw[NavyBlue] (3,0) to[out=30, in=150] (4,0);
\draw[NavyBlue, dotted] (2.9,-0.05) to[out=330, in=210] (4.1,-0.05);

\draw[NavyBlue] (5,0) to[out=30, in=150] (6,0);
\draw[NavyBlue, dotted] (4.9,-0.05) to[out=330, in=210] (6.1,-0.05);

\draw[NavyBlue] (7,0) to[out=30, in=150] (8,0);
\draw[NavyBlue, dotted] (6.9,-0.05) to[out=330, in=210] (8.1,-0.05);

\draw[NavyBlue] (1.32,0) to[out=30, in=150] (2,0);
\draw[NavyBlue, dotted] (1.32,-0.05) to[out=330, in=210] (2.1,-0.05);

\end{scope}

\begin{scope}[shift = {(5,2)}, scale = .75];
\draw (2.5,-2) to[out=180, in=180] (2.5,2);
\draw (2.5,-2) to (8, -2);
\draw (2.5,2) to (8, 2);
\draw[] (8, -2) to[out =105, in=255] (8, -0.5);
\draw (8, -2) to[out=75, in=285] (8, -0.5);
\draw[] (8, 2) to[out =255, in=105] (8, 0.5);
\draw(8, 2) to[out=285, in=75] (8, 0.5);
\draw (8,0.5) to[out=230, in=130] (8,-0.5);

\draw (2, -0) to[out=-30, in=210] (3, 0);
\draw (2.2,-0.1) to[out=40, in=140] (2.8, -0.1);

\draw (4, -0) to[out=-30, in=210] (5, 0);
\draw (4.2,-0.1) to[out=40, in=140] (4.8, -0.1);

\draw (6, -0) to[out=-30, in=210] (7, 0);
\draw (6.2,-0.1) to[out=40, in=140] (6.8, -0.1);

\draw[red] (2.5, 0) ellipse (0.9 and 0.5);

\draw[red] (4.5, 0) ellipse (0.9 and 0.5);

\draw[red] (6.5, 0) ellipse (0.9 and 0.5);

\draw[NavyBlue] (3,0) to[out=30, in=150] (4,0);
\draw[NavyBlue, dotted] (2.9,-0.05) to[out=330, in=210] (4.1,-0.05);

\draw[NavyBlue] (5,0) to[out=30, in=150] (6,0);
\draw[NavyBlue, dotted] (4.9,-0.05) to[out=330, in=210] (6.1,-0.05);

\draw[NavyBlue] (7,0) to[out=30, in=160] (7.82,0.09);
\draw[NavyBlue, dotted] (6.9,-0.05) to[out=330, in=210] (7.82,-0.14);

\draw[NavyBlue] (1.32,0) to[out=30, in=150] (2,0);
\draw[NavyBlue, dotted] (1.32,-0.05) to[out=330, in=210] (2.1,-0.05);
\end{scope}
\end{tikzpicture}

\caption{The Perron-Vannier representation for the Artin groups of type $A_n$. If $n$ is even, then the element $\Delta_S^4$ goes to a Dehn twists around the boundary curve. If $n$ is odd, then the element $\Delta_S^2$ goes to multitwist that is a product of single Dehn twists about each of the boundary curves.}
\label{f:an}
\end{figure}

A \emph{multicurve} is a disjoint union of a finite number of simple, closed, essential curves in $\Sigma$. A \emph{multitwist} about a multicurve is the composition of (not necessarily the same) powers of Dehn twists about the individual curves. Since the curves are disjoint, the order in which we compose those Dehn twists does not matter. 

For the small-type spherical Artin groups, there are classical representations into mapping class groups. This is due to Birman-Hilden for type $A_n$, and Perron-Vannier in general \cite{pv}. We will refer to all of them as \emph{Perron-Vannier representations}. 
Crisp and Paris defined similar representations for all small-type Artin groups, however we shall not need this generality. 
These representations also naturally arise as geometric monodromies of simple singularities of type $\gG$ \cite{matsumoto}.

For $A$ of type $A_n$, the Perron-Vanier representation $A\to \Mod(\Sigma)$ sends the consecutive generators of $A$ to the Dehn twists around the consecutive curves in Figure~\ref{f:an}. The surface $\Sigma$ has genus $\frac{n-1}{2}$ and two boundary components, when $n$ is odd, and $\Sigma$ has genus $\frac{n}{2}$ and one boundary component, when $n$ is even. 

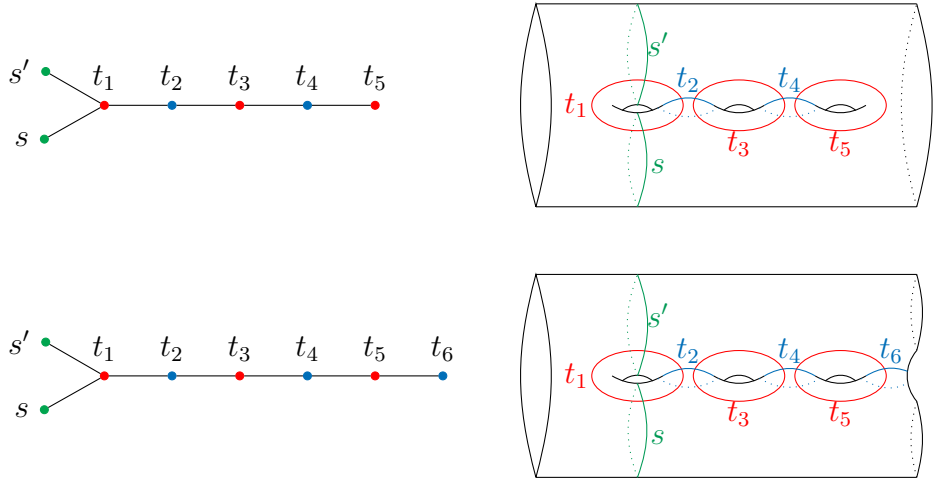
\begin{figure}
\centering

\begin{tikzpicture}[scale = .9]
\begin{scope}
\node[circle, draw, fill, inner sep = 0pt,minimum width = 3pt, label=above:$t_1$, color=red] (a) at (0,0) {};
\node[circle, draw, fill, inner sep = 0pt,minimum width = 3pt, label=above:$t_2$, color=NavyBlue] (b) at (1,0) {};
\node[circle, draw, fill, inner sep = 0pt,minimum width = 3pt, label=above:$t_3$, color=red] (g) at (2,0) {};
\node[circle, draw, fill, inner sep = 0pt,minimum width = 3pt, label=above:$t_4$, color=NavyBlue] (c) at (3,0) {};
\node[circle, draw, fill, inner sep = 0pt,minimum width = 3pt, label=above:$t_5$, color=red] (d) at (4,0) {};
\node[circle, draw, fill, inner sep = 0pt,minimum width = 3pt, label=left:$s'$, color=Green] (e) at (-0.866,0.5) {};
\node[circle, draw, fill, inner sep = 0pt,minimum width = 3pt, label=left:$s$, color=Green] (f) at (-0.886,-0.5) {};
 \draw (e) -- (a) -- (b) -- (g) -- (c) --(d);
\draw (f) -- (a);
\end{scope}

\begin{scope}[shift={(6,0)}, scale = 0.75]
\draw (0.5,-2) to[out=105, in=255] (0.5,2) to[out=285, in=75] (0.5,-2);
\draw (0.5,-2) to (8, -2);
\draw (0.5,2) to (8, 2);
\draw[dotted] (8,-2) to[out=105, in=255] (8,2);
\draw (8,2) to[out=285, in=75] (8,-2);

\draw (2, -0) to[out=-30, in=210] (3, 0);
\draw (2.2,-0.1) to[out=40, in=140] (2.8, -0.1);

\draw (4, -0) to[out=-30, in=210] (5, 0);
\draw (4.2,-0.1) to[out=40, in=140] (4.8, -0.1);

\draw (6, -0) to[out=-30, in=210] (7, 0);
\draw (6.2,-0.1) to[out=40, in=140] (6.8, -0.1);

\draw[red] (2.5, 0) ellipse (0.9 and 0.5);
\node[red, draw=none,fill=none] at (1.3,0) {$t_1$};

\draw[red] (4.5, 0) ellipse (0.9 and 0.5);
\node[red, draw=none,fill=none] at (4.5,-0.75) {$t_3$};

\draw[red] (6.5, 0) ellipse (0.9 and 0.5);
\node[red, draw=none,fill=none] at (6.5,-0.75) {$t_5$};

\draw[NavyBlue] (3,0) to[out=30, in=150] (4,0);
\draw[NavyBlue, dotted] (2.9,-0.05) to[out=330, in=210] (4.1,-0.05);
\node[NavyBlue, draw=none,fill=none] at (3.5,0.5) {$t_2$};

\draw[NavyBlue] (5,0) to[out=30, in=150] (6,0);
\draw[NavyBlue, dotted] (4.9,-0.05) to[out=330, in=210] (6.1,-0.05);
\node[NavyBlue, draw=none,fill=none] at (5.5,0.5) {$t_4$};

\draw[ForestGreen] (2.5, 0) to[out=70 , in=290] (2.5, 2);
\draw[ForestGreen, dotted] (2.5, 0) to[out=110 , in=250] (2.5, 2);
\node[ForestGreen, draw=none,fill=none] at (2.9,1.2) {$s'$};

\draw[ForestGreen] (2.5, -0.15) to[out=290 , in=70] (2.5, -2);
\draw[ForestGreen, dotted] (2.5, -0.15) to[out=250 , in=110] (2.5, -2);
\node[ForestGreen, draw=none,fill=none] at (2.9,-1.2) {$s$};
\end{scope}

\begin{scope}[shift={(0,-4)}]
\node[circle, draw, fill, inner sep = 0pt,minimum width = 3pt, label=above:$t_1$, color=red] (a) at (0,0) {};
\node[circle, draw, fill, inner sep = 0pt,minimum width = 3pt, label=above:$t_2$, color=NavyBlue] (b) at (1,0) {};
\node[circle, draw, fill, inner sep = 0pt,minimum width = 3pt, label=above:$t_3$, color=red] (g) at (2,0) {};
\node[circle, draw, fill, inner sep = 0pt,minimum width = 3pt, label=above:$t_4$, color=NavyBlue] (c) at (3,0) {};
\node[circle, draw, fill, inner sep = 0pt,minimum width = 3pt, label=above:$t_5$, color=red] (d) at (4,0) {};
\node[circle, draw, fill, inner sep = 0pt,minimum width = 3pt, label=above:$t_6$, color=NavyBlue] (h) at (5,0) {};
\node[circle, draw, fill, inner sep = 0pt,minimum width = 3pt, label=left:$s'$, color=Green] (e) at (-0.866,0.5) {};
\node[circle, draw, fill, inner sep = 0pt,minimum width = 3pt, label=left:$s$, color=Green] (f) at (-0.886,-0.5) {};
\draw (e) -- (a) -- (b) -- (g) -- (c) -- (d) -- (h);
\draw (f) -- (a);
\end{scope}
\begin{scope}[shift={(6,-4)}, scale = 0.75]
\draw (0.5,-2) to[out=105, in=255] (0.5,2) to[out=285, in=75] (0.5,-2);
\draw (0.5,-2) to (8, -2);
\draw (0.5,2) to (8, 2);
\draw[dotted] (8, -2) to[out =105, in=255] (8, -0.5);
\draw (8, -2) to[out=75, in=285] (8, -0.5);
\draw[dotted] (8, 2) to[out =255, in=105] (8, 0.5);
\draw(8, 2) to[out=285, in=75] (8, 0.5);
\draw (8,0.5) to[out=230, in=130] (8,-0.5);

\draw (2, -0) to[out=-30, in=210] (3, 0);
\draw (2.2,-0.1) to[out=40, in=140] (2.8, -0.1);

\draw (4, -0) to[out=-30, in=210] (5, 0);
\draw (4.2,-0.1) to[out=40, in=140] (4.8, -0.1);

\draw (6, -0) to[out=-30, in=210] (7, 0);
\draw (6.2,-0.1) to[out=40, in=140] (6.8, -0.1);

\draw[red] (2.5, 0) ellipse (0.9 and 0.5);
\node[red, draw=none,fill=none] at (1.3,0) {$t_1$};

\draw[red] (4.5, 0) ellipse (0.9 and 0.5);
\node[red, draw=none,fill=none] at (4.5,-0.75) {$t_3$};

\draw[red] (6.5, 0) ellipse (0.9 and 0.5);
\node[red, draw=none,fill=none] at (6.5,-0.75) {$t_5$};

\draw[NavyBlue] (3,0) to[out=30, in=150] (4,0);
\draw[NavyBlue, dotted] (2.9,-0.05) to[out=330, in=210] (4.1,-0.05);
\node[NavyBlue, draw=none,fill=none] at (3.5,0.5) {$t_2$};

\draw[NavyBlue] (5,0) to[out=30, in=150] (6,0);
\draw[NavyBlue, dotted] (4.9,-0.05) to[out=330, in=210] (6.1,-0.05);
\node[NavyBlue, draw=none,fill=none] at (5.5,0.5) {$t_4$};

\draw[ForestGreen] (2.5, 0) to[out=70 , in=290] (2.5, 2);
\draw[ForestGreen, dotted] (2.5, 0) to[out=110 , in=250] (2.5, 2);
\node[ForestGreen, draw=none,fill=none] at (2.9,1.2) {$s'$};

\draw[ForestGreen] (2.5, -0.15) to[out=290 , in=70] (2.5, -2);
\draw[ForestGreen, dotted] (2.5, -0.15) to[out=250 , in=110] (2.5, -2);
\node[ForestGreen, draw=none,fill=none] at (2.9,-1.2) {$s$};

\draw[NavyBlue] (7,0) to[out=30, in=160] (7.82,0.09);
\draw[NavyBlue, dotted] (6.9,-0.05) to[out=330, in=210] (7.82,-0.14);
\node[NavyBlue, draw=none,fill=none] at (7.5,0.5) {$t_6$};

\end{scope}
\end{tikzpicture}
\caption{The Perron-Vannier representation $A\to Mod(\Sigma)$ for Artin group $A$ of type $D_n$. Let $\gamma_1$ be the left connected component of $\partial \Sigma$. For $n$ odd, the element $\Delta^2_S$ gets sent to a product of Dehn twists $\gamma_1^{n-2}\gamma_2$. 
For $n$ even, the element $\Delta^2_S$ gets sent to a product of Dehn twists $\gamma_1^{\frac{n}{2}-1} \gamma_2 \gamma_3$.}
\label{fig:Dn}
\end{figure}

If $A$ has type $D_n$, let the standard generators of $A$ be $\{s, s',t_1, \dots, t_{n-2}\}$ where $s$ and $s'$ are both adjacent to $t_1$ in the Coxeter graph, and $t_i$ and $t_{i+1}$ are adjacent for all $i=1,\dots, n-3$. The Perron-Vannier representation $A \to\Mod(\Sigma)$ sends the generators to the Dehn twists around curves, as pictured in Figure~\ref{fig:Dn}. The surface $\Sigma$ has genus $\frac{n-1}2$ and two boundary components when $n$ is odd, and genus $\frac{n-2}2$ and three boundary components when $n$ is even.

The Perron-Vannier representation $A\to \Mod(\Sigma)$ of the Artin group $A$ of type $E_n$, where $n=6,7,8$, is illustrated in Figure~\ref{fig:E6}. The surface $\Sigma$ has 
\begin{itemize}
\item genus three and one boundary component, if $n=6$.
\item genus three and two boundary components, if $n=7$.
\item genus four and one boundary component, if $n=8$.
\end{itemize}

\begin{remark}\label{rem:deltas boundaries}
For any small type spherical Artin group $A$ with the Perron-Vannier representation $\rho:A\to \Mod(\Sigma)$, the image of the element $\Delta^4_S$ under $\rho$ is a multitwist about the boundary components of $\partial \Sigma$. It will never matter for us the exact power of each Dehn twist. 
\end{remark}

\begin{figure}
\centering
\begin{tikzpicture}[scale = .9]
\begin{scope}
\node[circle, draw, fill, inner sep = 0pt,minimum width = 3pt, color=red, label=below:$t_1$] (a) at (0,0) {};
\node[circle, draw, fill, inner sep = 0pt,minimum width = 3pt, color=NavyBlue, label=below:$t_2$] (b) at (1,0) {};
\node[circle, draw, fill, inner sep = 0pt,minimum width = 3pt, color=red, label=below:$t_3$] (c) at (2,0) {};
\node[circle, draw, fill, inner sep = 0pt,minimum width = 3pt, color=NavyBlue, label=below:$t_4$] (d) at (3,0) {};
\node[circle, draw, fill, inner sep = 0pt,minimum width = 3pt, color=red, label=below:$t_5$] (e) at (4,0) {};
\node[circle, draw, fill, inner sep = 0pt,minimum width = 3pt, color=NavyBlue, label=below:$t_6$] (g) at (5,0) {};
\node[circle, draw, fill, inner sep = 0pt,minimum width = 3pt, color=red, label=below:$t_7$] (h) at (6,0) {};
\node[circle, draw, fill, inner sep = 0pt,minimum width = 3pt, color=Green, label=right:$s$] (f) at (2,1) {};
\draw (a) -- (b) -- (c) -- (f);
\draw[dotted] (e) -- (g) -- (h);
\draw (c) -- (d); 
\draw (d) -- (e);
\end{scope}

\begin{scope}[shift={(6,0)}, scale = 0.75]
\draw (2.5,-2) to[out=180, in=180] (2.5,2);
\draw (2.5,-2) to (10, -2);
\draw (2.5,2) to (10, 2);
\draw[dotted] (10,-2) to[out=105, in=255]  (10, 2);
\draw (10,2) to[out=285, in=75] (10, -2);

\draw[Gray] (8.5, 0) to[out=70 , in=290] (8.5, 2);
\draw[Gray, dotted] (8.5, 0) to[out=110 , in=250] (8.5, 2);
\draw[Gray] (8.5, -0.15) to[out=-70 , in=-290] (8.5, -2);
\draw[Gray, dotted] (8.5, -0.15) to[out=-110 , in=-250] (8.5, -2);

\draw[Gray,dotted] (7.5, -2) to[out=105, in=255] (7.5, 2);
\draw[Gray] (7.5,2) to[out=285, in=75] (7.5, -2);

\draw (2, -0) to[out=-30, in=210] (3, 0);
\draw (2.2,-0.1) to[out=40, in=140] (2.8, -0.1);

\draw (4, -0) to[out=-30, in=210] (5, 0);
\draw (4.2,-0.1) to[out=40, in=140] (4.8, -0.1);

\draw (6, -0) to[out=-30, in=210] (7, 0);
\draw (6.2,-0.1) to[out=40, in=140] (6.8, -0.1);

\draw (8, -0) to[out=-30, in=210] (9, 0);
\draw (8.2,-0.1) to[out=40, in=140] (8.8, -0.1);

\draw[red] (2.5, 0) ellipse (0.9 and 0.5);
\node[red, draw=none,fill=none] at (2.5,-0.75) {$t_1$};

\draw[red] (4.5, 0) ellipse (0.9 and 0.5);
\node[red, draw=none,fill=none] at (4.5,-0.75) {$t_3$};

\draw[red] (6.5, 0) ellipse (0.9 and 0.5);
\node[red, draw=none,fill=none] at (6.5,-0.75) {$t_5$};

\draw[red] (8.5, 0) ellipse (0.9 and 0.5);
\node[red, draw=none,fill=none] at (8.5,-0.75) {$t_7$};

\draw[NavyBlue] (3,0) to[out=30, in=150] (4,0);
\draw[NavyBlue, dotted] (2.9,-0.05) to[out=330, in=210] (4.1,-0.05);
\node[NavyBlue, draw=none,fill=none] at (3.5,0.5) {$t_2$};

\draw[NavyBlue] (5,0) to[out=30, in=150] (6,0);
\draw[NavyBlue, dotted] (4.9,-0.05) to[out=330, in=210] (6.1,-0.05);
\node[NavyBlue, draw=none,fill=none] at (5.5,0.5) {$t_4$};

\draw[NavyBlue] (7,0) to[out=30, in=150] (8,0);
\draw[NavyBlue, dotted] (6.9,-0.05) to[out=330, in=210] (8.1,-0.05);
\node[NavyBlue, draw=none,fill=none] at (7.5,0.5) {$t_6$};

\draw[Green] (4.5, 0) to[out=70 , in=290] (4.5, 2);
\draw[Green, dotted] (4.5, 0) to[out=110 , in=250] (4.5, 2);
\node[Green, draw=none,fill=none] at (4.9,1.2) {$s$};

\end{scope}
\end{tikzpicture}
\caption{The Coxeter diagram for Artin group $A$ of type $E_6, E_7, E_8$. A representation $A\to \Mod(\Sigma)$. The subsurfaces with gray boundary curves correspond to $E_6$ and $E_7$.}
\label{fig:E6}

\end{figure}
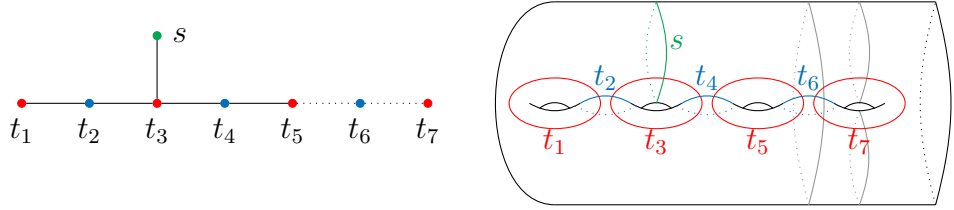

\section{Generalized Tits Conjecture for locally reducible Artin groups}\label{s:lr}

In this section, we show that Artin groups with no edges labeled by $3$ satisfy the Generalized Tits Conjecture for $N = 1$, and locally reducible Artin groups satisfy the conjecture for $N = 2$. 
An Artin group is \emph{totally reducible} if its Coxeter diagram is a disjoint union of vertices and single edges, i.e.\ it is a direct product of Artin groups of rank at most $2$.
Recall from the introduction that an Artin group is \emph{locally reducible} if all spherical special subgroups are totally reducible. 
We first record  an easy characterization of the locally reducible Artin groups in terms of their Coxeter diagram.

\begin{lemma}[{\cite[Lem 3.1]{charney}}]
Let $\gG$ be a Coxeter diagram, and $A$ the associated Artin group. Then $A$ is locally reducible if and only if $\gG$ satisfies the following condition:

If two consecutive edges of $\gG$ are not contained in a triangle, then their labels $a,b$ satisfy $1/a + 1/b \le 1/2$.
\end{lemma}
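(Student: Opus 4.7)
The plan is to translate local reducibility into a purely combinatorial condition on $\gG$ and then read off the answer from the classification of finite Coxeter groups on three generators.

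First I would note that $A_\gG$ is locally reducible if and only if no connected subdiagram of $\gG$ on at least three vertices is spherical. This is just unpacking definitions: a spherical subset $T\subseteq S$ gives a finite special subgroup $W_T$, which splits as the direct product of the irreducible Coxeter groups on the connected components of $\gG_T$; requiring each such factor to be dihedral or a copy of $\zz/2$ is the same as requiring each connected component of $\gG_T$ to have at most two vertices. Since every subdiagram of a spherical diagram is spherical, and every connected diagram on $\ge 3$ vertices contains a $3$-vertex connected subdiagram, this condition reduces further to: no $3$-vertex connected subdiagram of $\gG$ is spherical.

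Next I would enumerate the $3$-vertex connected subdiagrams. Each is either a path of two edges with labels $a,b\ge 3$ (the three vertices not lying on a triangle in $\gG$, i.e.\ no edge between the outer two) or a triangle with all three labels $\ge 3$. Consulting Figure~\ref{fig:fincox}, the only spherical $3$-node paths are $A_3$, $B_3$, $H_3$, corresponding to $(a,b)\in\{(3,3),(3,4),(3,5)\}$, which is exactly the set with $1/a+1/b>1/2$. For a triangle with all labels $m_{ij}\ge 3$, the Gram determinant of the associated bilinear form is $1-x^2-y^2-z^2-2xyz$ with $x=\cos(\pi/m_{12})$, $y=\cos(\pi/m_{13})$, $z=\cos(\pi/m_{23})$ all in $[1/2,1)$; since $x^2+y^2+z^2+2xyz\ge 3/4+1/4=1$ this is nonpositive, so the form is not positive definite and the triangle is never spherical (equivalently, the spherical triangle inequality $\pi/m_{12}+\pi/m_{13}+\pi/m_{23}>\pi$ cannot hold when every $m_{ij}\ge 3$).

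Combining these observations, the hypothesis ``every pair of consecutive edges not contained in a triangle has $1/a+1/b\le 1/2$'' is precisely ``no $3$-vertex path subdiagram is spherical,'' and together with the triangle observation this is precisely ``no $3$-vertex connected subdiagram of $\gG$ is spherical,'' which by the first paragraph is local reducibility. There is no real obstacle in this argument; the only step requiring any thought is the initial translation between local reducibility and a condition on subdiagrams, after which everything is a direct consequence of the classification of finite irreducible Coxeter groups.
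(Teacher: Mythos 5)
Your argument is correct, and since the paper states this lemma without proof (it is presented as an ``easy characterization''), there is no competing argument to compare against. The two key reductions are the right ones: first, that $W_T$ is a product of dihedral groups and $\zz/2$'s precisely when every connected component of $\gG_T$ has at most two vertices, so local reducibility is the absence of connected spherical subdiagrams on $\ge 3$ vertices; and second, that this can be tested on $3$-vertex connected subdiagrams because every connected graph on $\ge 3$ vertices contains one (any edge $\{s,t\}$ plus a third vertex adjacent to $s$ or $t$, which exists by connectedness) and spherical subsets are closed under passing to subsets. From there the finite list $A_3$, $B_3$, $H_3$ matching $(a,b)$ with $1/a+1/b>1/2$, together with the observation that a triangle with all labels $\ge 3$ has Gram determinant $1-x^2-y^2-z^2-2xyz\le 0$ (or equivalently fails $1/a+1/b+1/c>1$) and hence is never spherical, gives exactly the stated criterion. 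One small presentational point: the labels $a,b$ may also be $\infty$, in which case $1/a+1/b\le 1/2$ holds automatically and the corresponding path is not spherical, so this case is consistent but is worth mentioning explicitly since the classification list you consult only covers finite labels.
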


\subsection{CAT(0) geometry}
We will assume that the reader is comfortable with comparison geometry, particularly in the setting of piecewise Euclidean and piecewise spherical cell complexes, see \cite{charney}, \cite{bh} or \cite[Appendix I]{dbook} for the relevant details. We record some theorems and definitions that we will need. The first is due to Gromov, proofs can be found in \cite{bh}. 

\begin{Theorem}
Let $X$ be a piecewise Euclidean cell complex. Then $X$ is locally CAT(0) if and only if the induced piecewise spherical metric on the link $\Lk(v,X)$ is CAT(1) for all vertices $v \in X$. If $X$ is simply connected and locally CAT(0), then it is CAT(0). 
\end{Theorem}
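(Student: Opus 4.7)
The plan is to prove this in two stages, following the standard presentation in Bridson--Haefliger. The first stage is Gromov's link condition characterizing local CAT(0); the second stage is the Cartan--Hadamard theorem in the nonpositively curved setting.

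For the link condition, the key analytic input is Berestovskii's theorem: the Euclidean cone $C_0(Y)$ on a metric space $Y$ is a CAT(0) space if and only if $Y$ is a CAT(1) space. One proves this directly by Alexandrov comparison, using the cone distance formula and the spherical law of cosines to relate triangles in $C_0(Y)$ to comparison triangles in $Y$. With this in hand, I would exploit the fact that a piecewise Euclidean cell complex (with finitely many shapes, as one always implicitly assumes) is locally conical at each point: there is an $\varepsilon>0$ such that the open metric ball of radius $\varepsilon$ around a vertex $v$ is canonically isometric to the open $\varepsilon$-ball about the cone point in the Euclidean cone $C_0(\Lk(v,X))$. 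Berestovskii's theorem then says this ball is CAT(0) precisely when $\Lk(v,X)$ is CAT(1). At non-vertex points of $X$ the local geometry is a product of a Euclidean factor with the cone on a lower-dimensional link, so by induction on dimension the CAT(1) condition at every vertex suffices for local CAT(0) throughout $X$.

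For the second stage I would invoke the Cartan--Hadamard theorem in the generality proved in Bridson--Haefliger, Chapter II.4: a complete, simply connected, locally CAT(0) length space is globally CAT(0). The standard argument proceeds by first showing that local geodesics in a simply connected locally CAT(0) space are globally length-minimizing, which yields unique geodesics between any two points; one then establishes the CAT(0) four-point condition by subdividing a geodesic triangle into finer subtriangles contained in convex CAT(0) balls and propagating the local comparison inequality outward via a homotopy argument.

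The main obstacle is this subdivision-and-propagation step in the Cartan--Hadamard part: the local comparison inequalities must be assembled coherently across arbitrarily large triangles, which requires careful continuity and compactness arguments along a geodesic homotopy. The link condition itself is essentially formal once Berestovskii's cone theorem and the local conicality of piecewise Euclidean complexes are in hand; the only mild subtlety there is handling the inductive step on dimension to reduce comparison at non-vertex points to comparison at vertices of lower-dimensional links.
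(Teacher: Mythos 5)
The paper does not prove this theorem; it records it as a background result due to Gromov and refers to Bridson--Haefliger \cite{bh} for the proof. Your outline (Berestovskii's cone theorem plus local conicality of piecewise Euclidean complexes for the link condition, and the Cartan--Hadamard theorem for the global statement) is precisely the standard argument in that reference, so you have correctly reconstructed the proof the paper is pointing to.
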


\begin{lemma}[see Appendix of \cite{cd1}]
The spherical join of two piecewise spherical complexes $L_1$ and $L_2$ is CAT(1) if and only if $L_1$ and $L_2$ are CAT(1). 
\end{lemma}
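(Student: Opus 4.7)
The plan is to reduce the lemma to the well-known metric join theorem of Berestovskii. Begin by observing that the simplex-by-simplex orthogonal join construction described just above the statement of the lemma agrees, on the level of metric spaces, with the abstract spherical join $L_1 \ast_{\mathrm{sph}} L_2$ in the sense of Bridson--Haefliger. This identification is routine but needs to be checked: one verifies that on each product simplex $\sigma \ast \tau$ the piecewise spherical metric prescribed by the orthogonal-join convention agrees with the induced metric of the spherical join of $\sigma$ and $\tau$, and then notes that distances globally in both descriptions are computed by minimizing over spherical polygonal paths. Once this identification is in place, both implications become clean.

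For the forward direction, $L_1$ and $L_2$ embed isometrically in $L_1 \ast L_2$ as the two "poles'' of the join, and each is a $\pi$-convex subspace. A $\pi$-convex subspace of a CAT(1) space is itself CAT(1) (geodesic segments of length $<\pi$ inside the subspace are geodesic in the ambient space, and comparison triangles pull back). Hence CAT(1) of the join forces CAT(1) of each factor.

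For the backward direction, one invokes Berestovskii's theorem (see Bridson--Haefliger, Theorem II.3.14): the spherical join of two CAT(1) metric spaces is CAT(1). Given the identification above, this applies directly to $L_1 \ast L_2$ and finishes the proof. If one preferred to avoid citing the general theorem, the alternative is an inductive argument on total dimension using Gromov's link condition: at a vertex $v$ of $L_1 \subset L_1 \ast L_2$ one has $\Lk(v, L_1 \ast L_2) = \Lk(v, L_1) \ast L_2$, which is an orthogonal join of lower total dimension and hence CAT(1) by induction, and a symmetric statement holds for vertices in $L_2$; since every vertex of $L_1 \ast L_2$ lies in $L_1$ or $L_2$, all links are CAT(1).

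The main obstacle in the elementary approach is then ruling out closed geodesics of length less than $2\pi$ in $L_1 \ast L_2$. The key observation is that a local geodesic in a spherical join either stays in one factor (in which case CAT(1) of that factor handles it) or has nontrivial projection to both; in the latter case one exploits the explicit spherical-join distance formula $\cos d((x_1,y_1,t_1),(x_2,y_2,t_2)) = \cos t_1 \cos t_2 \cos d_{L_1}(x_1,x_2) + \sin t_1 \sin t_2 \cos d_{L_2}(y_1,y_2)$ to show the "latitude'' coordinate $t$ must traverse the full interval $[0,\pi/2]$ twice during any closed run, forcing length at least $2\pi$. This is precisely where the elementary route requires real work, and is the reason invoking Berestovskii's theorem is the cleaner strategy.
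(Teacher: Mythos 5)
The paper provides no proof of this lemma; it simply cites the Appendix of \cite{cd1}, where the statement is established by precisely your main strategy: identify the piecewise-spherical orthogonal join with the metric spherical join of Bridson--Haefliger and invoke Berestovskii's theorem. A minor streamlining is available: Berestovskii's theorem as recorded in Bridson--Haefliger (and in the cited appendix) is already an if-and-only-if, so your separate $\pi$-convexity argument for the forward direction, while correct, is redundant (and, as written, silently uses the fact that each factor sits as a $\pi$-convex subset of the join, which itself takes a short computation with the join distance formula).
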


\begin{definition}
Let $f: L \rightarrow L'$ be a map between piecewise spherical complexes. We say $f$ is \emph{$\pi$-distance preserving} if $$d_L(x_1, x_2) \ge \pi \implies d_{L'}(f(x_1), f(x_2)) \ge \pi.$$
\end{definition}

If $L$ is not connected, then we set $d_L = \infty$ for points in different components. 
Thus $f: L \rightarrow L'$ is $\pi$-distance preserving if it is $\pi$-distance preserving on each component of $L$ and points in different components get mapped $\ge \pi$ apart in $L'$.  

\begin{lemma}[{\cite[Lem 1.4]{charney}}]\label{l:isomembedding}
Suppose $f: X \rightarrow X'$ is a map between two piecewise Euclidean complexes which takes piecewise geodesics to piecewise geodesics. 
Then $f$ is locally an isometric embedding if and only if the induced maps on all links $\Lk(x,X) \rightarrow \Lk(f(x), X')$ are $\pi$-distance preserving. 
Furthermore, if $X'$ is CAT(0), then $f$ is an isometric embedding. 
\end{lemma}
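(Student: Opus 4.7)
The plan is to use the standard characterization of local geodesics in piecewise Euclidean complexes: a piecewise geodesic path $\gamma$ is a local geodesic if and only if, at every point $x$ where $\gamma$ enters and leaves, the incoming and outgoing directions, viewed as points in $\Lk(x,X)$, are at distance $\geq \pi$ (cf.\ \cite{bh}). A map $f:X\to X'$ between piecewise Euclidean complexes that sends piecewise geodesics to piecewise geodesics (and hence, by the standing assumption on such maps, is an isometry on each cell and so preserves lengths of piecewise geodesic paths) is a local isometric embedding precisely when it sends local geodesics to local geodesics. So the content of the equivalence is the passage back and forth between the link-distance condition and local geodesy.

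For the forward implication, assume $f$ is a local isometric embedding. Given two points $\xi_1,\xi_2$ in a common component of $\Lk(x,X)$ with $d(\xi_1,\xi_2)\geq \pi$, choose short representatives of these directions along piecewise geodesic segments through $x$; their concatenation is a local geodesic at $x$, hence maps to a local geodesic at $f(x)$, which forces $d(f_*\xi_1,f_*\xi_2)\geq \pi$ in $\Lk(f(x),X')$. If $\xi_1,\xi_2$ lie in different components of $\Lk(x,X)$ the same concatenation argument applies since directions in distinct components are at distance $\pi$ by convention, and a local isometric embedding cannot fold two such directions closer.

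For the backward implication, suppose every link map is $\pi$-distance preserving. Let $\gamma$ be any local geodesic in $X$. At each breakpoint $x$ of $\gamma$ the incoming and outgoing directions $\xi^-,\xi^+\in\Lk(x,X)$ are at distance $\geq\pi$, so by hypothesis $f_*\xi^-$ and $f_*\xi^+$ are at distance $\geq\pi$ in $\Lk(f(x),X')$. Since $f$ sends piecewise geodesics to piecewise geodesics and preserves their lengths, $f\circ\gamma$ is piecewise geodesic and satisfies the link condition at every breakpoint, hence is a local geodesic in $X'$. Thus $f$ is a local isometric embedding.

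For the last clause, assume $X'$ is CAT(0) and that $f$ is a local isometric embedding. Given $x_1,x_2\in X$, let $\gamma$ be a local geodesic from $x_1$ to $x_2$ in $X$ (which exists by connecting them with a piecewise geodesic and straightening locally, or by passing through a local geodesic in each cell traversed). Then $f\circ\gamma$ is a local geodesic in $X'$, and in a CAT(0) space every local geodesic is a geodesic; hence $\mathrm{length}(f\circ\gamma)=d_{X'}(f(x_1),f(x_2))$. Since $f$ preserves lengths, $d_{X'}(f(x_1),f(x_2))=\mathrm{length}(\gamma)\geq d_X(x_1,x_2)$, while the reverse inequality is automatic from the local isometry property. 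Thus $f$ is a global isometric embedding. The main subtlety to handle carefully is the forward direction, where one must exhibit the concatenation of directions as an actual local geodesic (which is immediate from the link characterization), and the treatment of points in different components of the link, which follows the convention $d=\infty$ on different components.
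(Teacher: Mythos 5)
The paper cites this result from Charney \cite{charney} without giving a proof, so there is no in-paper argument to compare against; I will just assess your reconstruction. Your account of the forward implication (local isometric embedding $\Rightarrow$ link maps $\pi$-distance preserving) and of the ``Furthermore'' clause (link condition $+$ CAT(0) target $\Rightarrow$ global isometric embedding) is correct and captures the standard mechanism: the link characterization of local geodesics, and the fact that local geodesics in a CAT(0) space are geodesics, combined with $1$-Lipschitzness to squeeze the inequalities. Notice, in fact, that your final paragraph never really uses that $f$ is a local isometric embedding, only that $f\circ\gamma$ is a local geodesic, so the chain ``link maps $\pi$-distance preserving $\Rightarrow$ local geodesics go to local geodesics $\Rightarrow$ (with CAT(0) target) global isometric embedding'' is established cleanly and is exactly what the paper uses in the proof of Theorem~\ref{thm:locally reducible}.

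The gap is in the backward implication of the first ``iff.'' You reduce ``$f$ is a local isometric embedding'' to ``$f$ sends local geodesics to local geodesics,'' asserting they are equivalent, but you only prove that a local isometric embedding sends local geodesics to local geodesics (the easy direction). The converse is the one you actually need, and it does not follow from the ingredients you've laid out: a length-preserving cellular map which takes local geodesics to local geodesics can fail to be locally injective, and hence fail to be a local isometric embedding. Concretely, take $L$ to be a path $a$--$b$--$c$--$d$ with three edges of length $\pi/6$, $L'$ a $3$-cycle $a'$--$b'$--$c'$ with edges of length $\pi/6$, and $g\colon L\to L'$ the simplicial map sending $a,d\mapsto a'$, $b\mapsto b'$, $c\mapsto c'$. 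All distances in $L$ are below $\pi$, so $g$ is (vacuously) $\pi$-distance preserving at the apex, and one checks the same at every other point; yet $f=\Cone(g)\colon\Cone(L)\to\Cone(L')$ identifies the two rays over $a$ and $d$, so it is not injective on any neighborhood of the cone point. The conclusion ``$f$ is a local isometric embedding'' simply does not follow from ``$f$ sends local geodesics to local geodesics'' without an additional hypothesis (e.g.\ injectivity of the link maps, or a curvature hypothesis that rules out short link cycles). In other words, the step ``Thus $f$ is a local isometric embedding'' at the end of your backward implication is exactly where the real content of Charney's Lemma 1.4 lies, and it is skipped. Since the application in the paper only requires the direction you did establish, this gap doesn't undermine the rest of the paper, but it is a genuine hole in a proof of the lemma as quoted.
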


\begin{lemma}[see Appendix of \cite{cd1}]\label{l:ojoins}
If $f: L_1 \rightarrow L_1'$ and $g: L_2 \rightarrow L_2'$ are $\pi$-distance preserving, and $L_1 \ast L_2$, $L_1' \ast L_2'$ are spherical joins, then $$f \ast g: L_1 \ast L_2 \rightarrow L_1' \ast L_2'$$ is $\pi$-distance preserving. 
\end{lemma}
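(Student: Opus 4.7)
The plan is to deduce the lemma from the spherical join distance formula. Writing $p = [x_1, x_2, \theta]$ and $q = [y_1, y_2, \phi]$ with $\theta, \phi \in [0, \pi/2]$ for the standard coordinates on points of $L_1 \ast L_2$, the join distance is controlled by the identity
\[
\cos d_{L_1 \ast L_2}(p, q) \;=\; \cos\theta\cos\phi\cos d_1 + \sin\theta\sin\phi\cos d_2,
\]
valid whenever $d_i := d_{L_i}(x_i, y_i) \le \pi$ and the two points share a common simplex of the join. The first step is to observe that the right-hand side strictly exceeds $-\cos(\theta - \phi) \ge -1$ as soon as $d_1, d_2 < \pi$, so the join distance is strictly less than $\pi$ in that range. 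Combined with the observation that every point of $L_1$ sits at distance exactly $\pi/2$ from every point of $L_2$ inside the join (so one can always ``route through the other factor'' in distance at most $\pi$), this yields the following characterization, which I would verify next: $d_{L_1 \ast L_2}(p, q) \ge \pi$ if and only if $\theta = \phi$, together with $d_{L_1}(x_1, y_1) \ge \pi$ whenever $\theta < \pi/2$ and $d_{L_2}(x_2, y_2) \ge \pi$ whenever $\theta > 0$.

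Given the characterization, the proof of the lemma is a short deduction. Suppose $d_{L_1 \ast L_2}(p, q) \ge \pi$; then $p$ and $q$ share a common $\theta$-coordinate and whichever constituent distances are relevant are $\ge \pi$. The $\pi$-distance preservation hypothesis on $f$ and $g$ then forces $d_{L_1'}(f(x_1), f(y_1)) \ge \pi$ and $d_{L_2'}(g(x_2), g(y_2)) \ge \pi$. Since $f \ast g$ is defined so as to preserve the $\theta$-coordinate, the image points $(f \ast g)(p)$ and $(f \ast g)(q)$ satisfy the same combinatorial conditions in $L_1' \ast L_2'$, and the characterization applied there yields the desired conclusion $d_{L_1' \ast L_2'}((f \ast g)(p), (f \ast g)(q)) \ge \pi$.

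The main obstacle is justifying the necessity direction of the characterization in piecewise spherical complexes that are not assumed to be CAT(1). A multi-simplex shortcut could a priori drop the join distance below $\pi$ even when the combinatorial conditions fail; to rule this out I would decompose an arbitrary piecewise-geodesic path from $p$ to $q$ into segments contained in single simplices of $L_1 \ast L_2$, track how the $\theta$-coordinate evolves along each segment using the formula, and use that any trip from a point of $L_1$ to a point of $L_2$ costs $\pi/2$ in the $\theta$-direction. This is precisely the kind of analysis carried out in the appendix of Charney--Davis, from which the lemma should follow by a direct citation or a small adaptation of their framework.
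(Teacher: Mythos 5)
Your proof is essentially correct and follows the same route as the cited reference: the paper provides no argument of its own for this lemma, but simply points to the appendix of Charney--Davis \cite{cd1}, whose content is precisely the global validity of the spherical join distance formula with truncated constituent distances that you invoke. Your characterization of when $d_{L_1\ast L_2}\ge\pi$ (equal $\theta$-coordinates, plus $\bar d_1\ge\pi$ when $\theta<\pi/2$ and $\bar d_2\ge\pi$ when $\theta>0$) is the correct reading of that formula, and the deduction via $\pi$-distance preservation of $f$ and $g$ together with the fact that $f\ast g$ fixes the $\theta$-coordinate is the right argument. Two small imprecisions worth fixing: the claim that the right-hand side \emph{strictly} exceeds $-\cos(\theta-\phi)$ whenever $d_1,d_2<\pi$ fails in the degenerate case $\{\theta,\phi\}=\{0,\pi/2\}$ (where both sides vanish), though the conclusion $d_{L_1\ast L_2}<\pi$ still holds there; and the ``routing through the other factor'' heuristic is not what caps the join distance at $\pi$ --- that comes from the truncation $\bar d_i=\min(d_i,\pi)$ built into the formula, which you should display explicitly. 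You correctly identify that globalizing the single-simplex formula across a piecewise-spherical complex is the nontrivial step and that this is exactly the analysis carried out in the Charney--Davis appendix, so citing it directly is appropriate.
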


Charney showed that the proof that $D(A)$ is CAT(0) for $2$-dimensional Artin groups extends to locally reducible Artin groups.

\begin{Theorem}{\cite[Thm 3.2]{charney}}
Let $A$ be a locally reducible Artin group. Then $D(A)$ equipped with the Moussong metric is CAT(0). 
\end{Theorem}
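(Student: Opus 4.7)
The plan is to verify the two hypotheses of Gromov's CAT(0) criterion: that $D(A)$ is simply connected, and that the piecewise spherical link $\Lk_{D(A)}(v)$ is CAT(1) at every vertex $v$. Simple connectivity of the modified Deligne complex $D(A)$ is the easier input: one can filter $D(A)$ by cosets of $K$ attached along spherical-coset subcomplexes of the form $aK_{\ge T}$, and give an inductive retraction argument (as in Charney--Davis for the 2-dimensional case) showing that these attachments are along simply connected subcomplexes, so $D(A)$ is simply connected.

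The main effort is the link condition. Since $K$ is a strict fundamental domain for the $A$-action, it suffices to check links at vertices $v_T\in K$ for each spherical $T\subseteq S$. Here I would invoke the orthogonal join decomposition recalled in Section~\ref{sec:Deligne complex},
\[
\Lk_{D(A)}(v_T) \;=\; \Lk_{K_{\ge T}}(v_T) \,*\, B(A_T),
\]
and then apply the join lemma stating that an orthogonal join is CAT(1) iff each factor is. Thus the problem reduces to showing that (a) $B(A_T)$ is CAT(1) for every spherical $T$, and (b) $\Lk_{K_{\ge T}}(v_T)$ is CAT(1) for every spherical $T$.

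The crucial use of local reducibility is for (a). By hypothesis, every spherical special subgroup $A_T$ decomposes as a direct product of rank-one factors (copies of $\mathbb{Z}$) and dihedral Artin factors $A_{\{s,t\}}$ with $m_{st}\ge 3$. Correspondingly, $B(A_T)$ splits as an orthogonal join of the links $B(\mathbb Z)$ (two antipodal points at distance $\pi$, trivially CAT(1)) and the dihedral pieces $B(A_{\{s,t\}})$. Applying the join lemma again reduces (a) to showing that, for every edge of $\Gamma$ with label $m$, the link $B(A_{\{s,t\}})$ is CAT(1). This link is a metric graph with all edges of length $\pi/m$, so CAT(1) is equivalent to girth $\ge 2m$, i.e.\ absence of closed geodesics of length $<2\pi$. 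This is precisely the content of \cite[Lem 6]{as} used in Charney--Davis, and it is the single nontrivial ingredient to import; it is a combinatorial statement about positive words in the dihedral Artin group $A_{\{s,t\}}$.

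For (b), $\Lk_{K_{\ge T}}(v_T)$ is built from the nerve of spherical supersets of $T$; under local reducibility the relevant join-decomposition plus an inductive argument on $|S\setminus T|$ again reduces the CAT(1) check to the dihedral girth statement already used in (a). Combining (a), (b), and the join lemma yields CAT(1) at every vertex, and then Gromov's theorem completes the argument. The main obstacle is entirely concentrated in the girth bound for the dihedral links; once that input is in hand, the rest is the formal join machinery together with the structural decomposition supplied by local reducibility.
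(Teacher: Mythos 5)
Your overall architecture---Gromov's link criterion, the orthogonal join decomposition $\Lk_{D(A)}(v_T)=\Lk_{K_{\ge T}}(v_T)*B(A_T)$, and the use of local reducibility to factor $B(A_T)$ into dihedral pieces $B(A_{\{s,t\}})$ whose CAT(1)-ness is the girth bound of Appel--Schupp---is exactly the shape of Charney's proof, and part (a) of your reduction is correct. However, your treatment of part (b) is mistaken in a way that matters. The complex $\Lk_{K_{\ge T}}(v_T)$ lives entirely on the Coxeter side: it is the \emph{same} piecewise-spherical complex that appears as a join-factor of the link of $v_T$ in the Davis complex of $W$, and the Moussong metric on $K$ is built precisely so that the Davis complex $W\times K/\!\sim$ is CAT(0). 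Its CAT(1)-ness is therefore a consequence of Moussong's theorem (via the observation that an orthogonal join with a round sphere is CAT(1) iff the other factor is), not a consequence of the Appel--Schupp girth bound for dihedral \emph{Artin} groups. Those two girth statements live in different complexes with different edge lengths ($\pi-\pi/m_{st}$ on the Coxeter side versus $\pi/(2m_{st})$ in $B(A_{\{s,t\}})$), and no induction on $|S\setminus T|$ converts one into the other. So Moussong's theorem is a genuine separate input you need to import, not something (a) already gives you.

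A smaller slip: $B(\mathbb{Z})$ is not two antipodal points. The spherical cosets of $\mathbb{Z}=\langle s\rangle$ are the singletons $\{a\}$ and $\mathbb{Z}$ itself, so $D(\mathbb{Z})$ is the cone over the discrete set $\mathbb{Z}$ and $B(\mathbb{Z})$ is an infinite discrete set. It is still trivially CAT(1), so your conclusion survives, but the structural claim is wrong. I would also flag that the simple-connectivity of $D(A)$ is genuinely due to Charney--Davis and deserves a citation rather than a sketch; that part is not specific to local reducibility and your gesture at a retraction argument is fine as far as it goes.
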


\subsection{Orthogonality in the Deligne complex}

\begin{definition}
Let $F_n = \langle x_1, \dots, x_n \rangle$ be a free group of rank $n$. 
Then every element $g$ of $F_n$ can be written uniquely as $$g = x_{i_1}^{n_1}x_{i_2}^{n_2} \dots x_{i_k}^{i_k}$$ where $x_{i_j} \ne x_{i_{j+1}}$ and $n_i \in \mathbb Z - \{0\}$. 
The \emph{syllable length} of $g$ in this case is equal $k$. 
If $G$ is a group admitting a surjection $\phi: F_n \rightarrow G$ is a surjection, then for $g \in G$ we define the \emph{syllable length} of $g$ with respect to the generating set $\{\phi(x_1), \dots, \phi(x_n)\}$, to be the infimum of syllable lengths of words in $\phi^{-1}(g)$. 

\end{definition}
For convenience, we will denote the generators of $A$ by letters $s,t,\dots$ rather than $x_s,x_t \dots$.
We will also denote the coset $a\langle\emptyset\rangle$ just by $a$ for each $a \in A$. 
If $A$ is a dihedral Artin group, then a simplicial path in $B(A)$ between two cosets $g$ and $h$ must contain an even number of vertices. 
In particular, each pair of consecutive edges connects $g$ to $gt^k$ or $gs^k$ for some $k \in \zz$. Therefore, we can associate to a path in $B(A)$ between $g$ and $h$ a word $w$ in the free group $F(s,t)$ so that $gw = h$ in $A$. If this path is embedded, then its length is equal to $\frac{\pi}{m}k$ where $k$ is the syllable length of $w$.

We now record a technical proposition concerning dihedral Artin groups.
In this case, the pure Artin group is isomorphic to the direct product $F_{m-1} \times \mathbb{Z}$, where $\Delta^2$ is the generator of $\mathbb Z$ (in this case, the hyperplane complement is homeomorphic to a $\mathbb C^\ast$-bundle over an $m$-punctured sphere). 
The proposition states a sort of orthogonality for the action of the pure Artin group on the Deligne complex.
 Given two elements $g,h$ in the Artin group, we let $d_{B(A)}(g, h)$ denote the distance in $B(A)$ between the vertices $g$ and $h$.

\begin{Prop}\label{lem:dihedral}
Let $A$ be a dihedral Artin group. Let $g$ be an element of the free subgroup $\langle s^2,t^2\rangle$. Then for any $n \in \zz - \{0\}$ we have that $$d_{B(A)}(\Delta^{2n}, g) \ge \pi.$$  Furthermore, if $m = m_{st} > 3$, then $$d_{B(A)}(\Delta^{2n}, g) \ge \pi + \pi/m.$$ Finally, if $m = m_{st} = 3$ and $g$ is in the free subgroup $\langle s^4, t^4 \rangle$, then  $$d_{B(A)}(\Delta^{2n}, g) \ge \pi + \pi/m.$$
\end{Prop}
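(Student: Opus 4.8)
The plan is to work entirely inside the piecewise spherical complex $B(A)$ for a dihedral Artin group $A$, using the CAT(1) metric described in Section~\ref{sec:Deligne complex} (where every edge of $B(A)$ has length $\pi/m$). The key combinatorial bridge is the correspondence recorded just before the statement: an embedded path in $B(A)$ from a coset $g$ to a coset $h$ of length $\frac{\pi}{m}k$ corresponds to a word $w\in F(s,t)$ of syllable length $k$ with $gw=h$ in $A$. So to bound $d_{B(A)}(\Delta^{2n},g)$ from below, I want to bound from below the syllable length of any word $w\in F(s,t)$ with $\Delta^{2n}w = g$ in $A$, i.e.\ with $w = \Delta^{-2n}g$ in $A$. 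Here $g$ lies in the free subgroup $\langle s^2,t^2\rangle$ (or $\langle s^4,t^4\rangle$ in the last case), and $\Delta^2$ generates the $\mathbb Z$ factor of $PA \cong F_{m-1}\times\mathbb Z$.

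\textbf{Main estimate via the abelianization.} First I would reduce the syllable-length bound to a length count in the abelianization, or rather to a count of $s$- and $t$-exponents. Write a word $w$ representing $\Delta^{-2n}g$. Since $g\in\langle s^2,t^2\rangle$ and $\Delta^2$ abelianizes (in $H_1(PA,\mathbb Z)=\mathbb Z^R$, with $R$ the reflections of the dihedral group, $|R|=m$) to $\sum_{r\in R}e_r$ by Lemma~\ref{l:homology}, while $s^2\mapsto e_s$ and $t^2\mapsto e_t$, the image of $\Delta^{-2n}g$ in $\mathbb Z^R$ is $-n\sum_{r}e_r$ plus a vector supported on $\{e_s,e_t\}$. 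For $n\neq 0$ this has nonzero coordinate $e_r$ for \emph{every} reflection $r\neq s,t$, and there are $m-2$ such reflections. Now I claim a word of syllable length $k$ in $s^{\pm1},t^{\pm1}$ maps to a class in $\mathbb Z^R$ whose support (among the reflections $\neq s,t$) has size at most $k-1$: this is the standard fact that the image of $s^at^bs^c\cdots$ records exactly the ``partial sums'' $e_s$-type and mixed reflections $sts,\,tst,\dots$ that one passes through, and each new syllable contributes at most one new reflection to the support. Being careful with the precise combinatorics of which reflections appear (this is really the structure of reduced words / galleries in the dihedral Coxeter group) gives $k-1\ge m-2$, hence $k\ge m-1$. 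Since each edge has length $\pi/m$, a geodesic has length $\ge (m-1)\pi/m = \pi - \pi/m$. This is not quite $\pi$; the extra care needed to push from $\pi-\pi/m$ to $\pi$ (and to $\pi+\pi/m$ when $m>3$) is the heart of the matter — one must observe that $k$ must be even (simplicial paths in $B(A)$ have an even number of vertices, so $k$ is even), which for $m-1$ odd (e.g.\ $m$ even) forces $k\ge m$, and more generally a parity-plus-support argument closes the gap. The cleanest route is probably: show $k\ge m$ always (using that the word $\Delta^{-2n}g$ genuinely involves both $s$ and $t$ with the right ``winding,'' so you cannot realize all $m$ reflections with fewer than $m$ syllables), giving length $\ge \pi$, and then when $m>3$ squeeze out one more syllable from the $\mathbb Z$-direction (the $\Delta^{2n}$ contributes nontrivially to the $s$- and $t$-exponents too, which a length-$m$ word cannot absorb), giving $k\ge m+1$ and length $\ge \pi+\pi/m$.

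\textbf{The $m=3$ case.} When $m=3$ the group $\langle s^2,t^2\rangle$ is too big for the above to give $\pi+\pi/m$ — indeed $\Delta^2$ has a short word and one only gets $\ge\pi$. This is why the hypothesis is strengthened to $g\in\langle s^4,t^4\rangle$ and the element is $\Delta^{4n}$. Here I would redo the abelianization count one level deeper: in $\mathbb Z^R$ with $m=3$, $\Delta^4\mapsto 2(e_s+e_t+e_u)$ (where $u$ is the third reflection $sts=tst$) and $s^4\mapsto 2e_s$, $t^4\mapsto 2e_t$, so $\Delta^{-4n}g\mapsto -2n\,e_u + (\text{stuff on }e_s,e_t)$, which is divisible by $2$ and has $e_u$-coordinate $\pm2n$. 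A word of syllable length $k$ in $F(s,t)$ mapping to such a class must, besides realizing the reflection $u$ (needing $k\ge$ a few syllables), have total $s$- and $t$-exponent sums of absolute value $\ge 2$ coming from the $\Delta^{4n}$ part even after cancellation with $g\in\langle s^4,t^4\rangle$ forces high exponents — pushing the count to $k\ge 4$, hence length $\ge 4\pi/3 = \pi+\pi/3 = \pi+\pi/m$. The combinatorial lemma I will need is a sharp statement: \emph{any word $w\in F(s,t)$ representing an element of $PA$ that projects to $\Delta^{2n}$ times something in $\langle s^2,t^2\rangle$ (resp.\ $\Delta^{4n}$ times something in $\langle s^4,t^4\rangle$ when $m=3$) has syllable length at least $m$ (resp.\ at least $m+1$ when $m>3$, at least $4$ when $m=3$)}; once this is established for all three regimes the Proposition follows immediately since $d_{B(A)} = \frac{\pi}{m}\cdot(\text{syllable length of geodesic})$.

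\textbf{Expected main obstacle.} The main obstacle is the sharp syllable-length lower bound — specifically squeezing out the \emph{last} syllable or two beyond the naive ``$m-2$ reflections need $\ge m-1$ syllables'' bound. The naive abelianization count in $\mathbb Z^R$ gives $\pi - \pi/m$, and the improvement to $\pi$ (and beyond, for $m>3$) requires combining (i) a parity constraint (even syllable length, from $B(A)$ being bipartite-like on $s$- vs $t$-type edges), (ii) the fact that $\Delta^{2n}$ (resp.\ $\Delta^{4n}$) forces nonzero $s$- and $t$-exponent totals that a minimal-length word realizing all reflections cannot simultaneously accommodate, and (iii) handling the degenerate small cases $m=3,4$ by hand. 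I would organize the write-up around a single combinatorial lemma about syllable lengths of elements of the dihedral pure Artin group, prove it by the $\mathbb Z^R$-support argument refined by parity, and then translate to $B(A)$-distance via the established path-to-word dictionary.
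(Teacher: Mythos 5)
Your overall framework---translating $d_{B(A)}$ into a syllable-length bound for a word $a$ with $\Delta^{2n}=ag$, and then counting in $H_1(PA,\zz)\cong\zz^R$ via Lemma~\ref{l:homology}---is exactly the paper's starting point, but your argument stops at precisely the step you yourself flag as ``the heart of the matter,'' and the routes you sketch for closing the gap do not work. Concretely: your support count gives only $k\ge m-1$, i.e.\ $d\ge\pi-\pi/m$, and your proposed parity fix rests on a false claim. What is even is the number of \emph{edges} of a simplicial path between vertices of the form $a\langle\emptyset\rangle$ (each pair of edges, of length $\pi/2m$ each, contributes one syllable); the syllable length $k$ itself need not be even---for instance $g$ and $gs^2$ are joined by a two-edge path of syllable length $1$. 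Likewise, your mechanism for the extra $\pi/m$ when $m>3$ (``$\Delta^{2n}$ contributes to the $s$- and $t$-exponents, which a length-$m$ word cannot absorb'') is not an obstruction: $g\in\langle s^2,t^2\rangle$ is arbitrary, so it can absorb any $e_s,e_t$ contribution, and each syllable of a length-$m$ word can carry an arbitrary exponent. The paper closes both gaps with a finer structural fact that a raw support count misses: writing $a=bc$ with $b$ the maximal all-odd-exponent prefix and peeling off conjugates $b_is^{2n_i}b_i^{-1}$, $b_it^{2n_i}b_i^{-1}$, the conjugators have syllable length strictly less than $\tfrac{m-1}{2}$ when $k<m$, so $\overline a$ necessarily misses the longest hyperplane(s); and when $k=m$ and $m>3$ the peeling forces $\overline a=e_{r_1}+\dots+e_{r_{k+1}}$ with strictly decreasing reflection lengths, which must miss one of the two intermediate hyperplanes. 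Both conclusions contradict the fact that $\Delta^{2n}$ has full support while $g$ only hits $e_s,e_t$.

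The third statement is a more serious problem for your plan: the $m=3$ case cannot be finished by abelianization counting at all. For example, the syllable-length-$3$ element $s^{a}t^{4n}s^{c}$ (with $a,c$ odd, $a+c$ even) lies in $PA$ and has class $2n\,e_u+\tfrac{a+c}{2}e_s$, which for suitable $a,c$ coincides with the class of $\Delta^{4n}g^{-1}$ for suitable $g\in\langle s^4,t^4\rangle$; so no $\zz^R$ argument can force $k\ge 4$. The paper instead proves explicit identities $\Delta^{2n}=st^{2n}st^2s^2\cdots$ (and the analogue for $\Delta^{-2n}$), reduces a hypothetical length-$3$ expression $\Delta^{4n}=s^kt^{4n}s^lg$ by conjugation to $k=1$, and derives an equation inside $\langle s^2,t^2\rangle$ that violates the original Tits Conjecture because the exponents occurring in $g$ are multiples of $4$. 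So while you have identified the correct dictionary and the correct homological tool, the sharp syllable-length lower bounds---which are the entire content of the Proposition---are missing, and the parity, exponent-absorption, and divisibility ideas you propose in their place each fail as stated.
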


\begin{proof}

Suppose that $d_{B(A)}(\Delta^{2n}, g) < \pi$, so there is $a \in A$ of syllable length $<m$ so that $$\Delta^{2n} = ag.$$ 
Note that $a$ is obviously an element of the pure Artin group $PA$, since both $g$ and $\Delta^{2n}$ are. We now compute the image $\overline a$ of $a$ in $H_1(PA, \mathbb Z)$. 
Let $a = bc$, where $b$ has only odd powers of $s$ and $t$, and $c$ begins with $s^{2r}$ or $t^{2r}$. Let $\pi: A \rightarrow W$ be the canonical projection. Since the length of $\pi(b)$ in $W$ is $< 2m$, $c$ must be nontrivial, otherwise $a$ would not be in $PA$.
Without loss of generality, suppose that $c$ starts with $s^{2r}$. Note that $c$ must project to $\pi(b)^{-1}$ under $\pi$. In particular, the syllable length of $b$ is strictly smaller than $\frac{m-1}{2}$.

Thus we can write $a = bs^{2r}b^{-1}bc'$, where $bc'$ is in $PA$ and has strictly smaller syllable length than $a$. Since $bs^{2r}b^{-1}$ and $bc'$ are in $PA$, 
in $H_1(PA,\mathbb Z)$  we have the relation between the images $$\overline a = \overline{bs^{2r}b^{-1}}+ \overline{bc'}.$$ 

Therefore, by induction on the syllable length of $a$, we can assume that $\bar a \in H_1(PA,\mathbb Z)$ is the sum of images of elements $b_is^{2r}b_i^{-1}$ and $b_it^{2r}b_i^{-1}$, where the syllable length of the $b_i$ is $\le \frac{m-2}{2}$ and the $b_i$ only contain odd powers of $s$ and $t$ (or they are trivial). 
Therefore, the image of $\Delta^{2n}$ in $H_1(PA, \mathbb Z)$ could be written as a sum of elements $b_is^{2r}b_i^{-1}$ and $b_it^{2r}b_i^{-1}$ with syllable length of the $b_i \le \frac{m-2}{2}$. It is also easy to see that these $b_i$ can only have syllable length $\frac{m-2}{2}$ for a single $i$.

However,  this contradicts Lemma \ref{l:homology}. 
Recall that $H_1(PA,\mathbb Z)$ has a standard basis given by the hyperplanes in the dihedral arrangement, and the image of $b_is^{2}b_i^{-1}$ is precisely the basis vector $e_r$, where $r$ is the conjugate $\pi(b_i)s\pi(b_i)^{-1}$.

In particular, if $m$ is odd, then any such sum as above misses the hyperplane corresponding to the longest element of $W$, which is a conjugate $wsw^{-1}$ where $w$ has length $\frac{m-1}{2}$. If $m$ is even, there are two hyperplanes corresponding to conjugates $wsw^{-1}$ and $w't(w')^{-1}$ where $w$ and $w'$ have length $\frac{m-2}{2}$, hence the elements in our sum miss one of these. We will refer to these as \emph{longest hyperplanes}.
Since $\Delta^{2n}$ maps in $H_1(PA,\mathbb Z)$ to a vector with nontrivial $e_r$-term for each $r \in R$, this is a contradiction as the image of $g$ obviously misses these longest hyperplanes as well. This completes the proof of the first statement.

Now, suppose that $m_{st} > 3$ and $d_{B(A)}(\Delta^{2n}, g) = \pi$. We first consider the odd case, where $m = 2k+1$. We can write $$\Delta^{2n} = ag$$ where the syllable length of $a$ is equal to $m$. 
In the decomposition $a = bc$ above, we claim that $b$ must have syllable length $k$. Otherwise, we can again write the images in $H_1(PA,\mathbb Z)$ as $\overline a = \overline{bs^{2r}b^{-1}}+ \overline{bc'}$. If $b$ has syllable length $< k$ then both $\overline{bs^{2r}b^{-1}}$ and $\overline{bc'}$ will miss the vector in $H_1$ corresponding to the longest hyperplane. 

Therefore, without loss of generality we have $a = bc$ and the syllable length of $b$ is $k$. Since $a$ is in the pure Artin group, $c$ is of the form $s^{2r}x$ or $t^{2r}x$ where $x$ projects to $\pi(b)^{-1}$ (in particular each term of $x$ has odd exponent). 

Therefore, without loss of generality we can rewrite $a = bs^{2r}b^{-1}a'$. By the above, $a' = b'c'$, where $b'$ has all odd exponents, has syllable length $k-1$, and where $c' = s^{2r}x'$ or $t^{2r}x'$ as above. Repeating this argument gives us that in $H_1(PA,\mathbb Z)$, $$\overline a = e_{r_1} + e_{r_2} + \dots + e_{r_{k+1}}$$ where the length of the reflections $r_i$ is strictly decreasing. If $m_{st} > 3$, then there are two hyperplanes of length $1 < \ell(r) < m$, and hence the image $\overline a$ misses one of those.

If $m = 2k$ is even, the proof essentially extends. 
The point here is that there are two longest hyperplanes and the image $\overline a$ will again miss one of these. More precisely, suppose that $a = bc$ as above. 
In order to hit one of the longest hyperplanes, $b$ must have syllable length $k-1$. This implies that $c = s^{2r}x$ or $c = t^{2r}x$, and $x$ projects to $\pi(b)^{-1}$. 
In particular, we must have that $x = yt^{2m}$ or $x = ys^{2m}$ where $y$ projects to $\pi(b)^{-1}$. Therefore, we can push the last syllable into $g$. 
Since we now have a word $a'$ of length $< m$, the proof in the first case rules out this possibility. 

We now prove the last statement. We will now  assume that $m_{st} = 3$, i.e.\ $A$ is the braid group on $3$-strands. We need the following lemma:

\begin{lemma}
If $n > 0$ we have that 
$$\Delta^{2n} = st^{2n}s\underbrace{t^2s^2\ \cdots}_{2n-1 \text{ terms}} = ts^{2n}t\underbrace{s^2t^2\ \cdots}_{2n-1 \text{ terms}}$$ 
$$ \Delta^{-2n} = s^{-1}t^{-2n}s^{-1}\underbrace{t^{-2}s^{-2}\ \cdots}_{2n-1 \text{ terms}} = t^{-1}s^{-2n}t^{-1}\underbrace{s^{-2}t^{-2}\ \cdots}_{2n-1 \text{ terms}}$$
\end{lemma}

\begin{proof}
We only prove the first equalities for both $\Delta^{2n}$ and $\Delta^{-2n}$; the same argument with $s$ and $t$ switched will give the second.
Since $\Delta^2 = st^2st^2 = t^2st^2s$ we have by induction that for $n > 1$ $$\Delta^{2n}=st^{2n-2}s\underbrace{t^2s^2\ \cdots}_{2n -3 \text{ terms}}(t^2st^2s) = st^{2n-2}(t^2st^2s)s\underbrace{t^2s^2\ \cdots}_{2n -3 \text{ terms}}$$
$$ = st^{2n}st^2s^2\underbrace{t^2s^2\ \cdots}_{2n -1 \text{ terms}}$$

Similarly, $\Delta^{-2} = s^{-1}t^{-2}s^{-1}t^{-2} = t^{-2}s^{-1}t^{-2}s^{-1}$, so by induction for $n > 1$ we have $$\Delta^{-2n} = s^{-1}t^{-2n+2}s^{-1}\underbrace{t^{-2}s^{-2}\ \cdots}_{2n-3 \text{ terms}} t^{-2}s^{-1}t^{-2}s^{-1} = s^{-1}t^{-2n}s^{-1}\underbrace{t^{-2}s^{-2}\ \cdots}_{2n-1 \text{ terms}}$$
\end{proof}

Now, suppose that $\Delta^{2n} = ag$ where $a$ has syllable length $3$ and $g \in \langle s^4,t^4\rangle$. Then by Lemma~\ref{l:homology} without loss of generality we can assume that $a = s^kt^{2n}s^l$ where $k$ and $l$ are odd integers. We will assume that $n > 0$, a similar argument works for $n < 0$.

By conjugating with an even power of $s$, we can assume that $k = 1$. 
Then we have 
$$st^{4n}s\underbrace{t^2s^2\ \cdots}_{2n -1 \text{ terms}} = st^{4n}s^lgs^{k-1}$$ so in particular 
$\underbrace{t^2s^2\ \cdots}_{2n -1 \text{ terms}} = s^{l-1}gs^{k-1}$, where $l-1$ and $k-1$ are even. 

Since the original Tits Conjecture holds for $A$, we must have that $\underbrace{t^2s^2\ \cdots}_{2n +1 \text{ terms}}$ is equivalent to $s^{l-1}gs^{k-1}$ in the free group on $s$ and $t$. But the powers of $s$ and $t$ in $g$ are powers of $4$, which is a contradiction. 
\end{proof}


\subsection{Proof of Generalized Tits Conjecture in the locally reducible case}
Let $A$ be a locally reducible Artin group and $\RA$ the associated RAAG.
Let $K$ be the fundamental domain for the action of $A$ on its Deligne complex $D(A)$ with the metric as described in Section~\ref{sec:Deligne complex}. Let $\hat A$ denote the RAAG subgroup that is in the original Tits Conjecture (i.e.\ the generators of $\hat A$ correspond to generators of $A$).
In \cite{charney} Charney defined a complex $$\widehat D(A) = \widehat A \times K/\sim,$$ where $(a_1,x) \sim (a_2,x)$ if and only if $x\in K_{\ge T}$ and $a_1^{-1}a_2 \in \widehat A_T$. 
This is not the Deligne complex for $\widehat A$, since $\widehat A_T$ may not be spherical. 
We define $$\widetriangle D(A) =RA \times K/\sim,$$ where $(a_1,x) \sim (a_2,x)$ if and only if $x\in K_{\ge T}$ and $a_1^{-1}a_2 \in RA_T$. 

The Moussong metric on $K$ induces a piecewise Euclidean metric on $\widetriangle D(A)$, and the homomorphisms $\Phi_{N}: \RA \rightarrow A$ define an induced map $\widetriangle \Phi_{N}: 
\widetriangle D(A) \rightarrow D(A)$ which sends $s \times K$ isomorphically onto $\Phi_{N}(s) \times K$. 

If $A$ is spherical, then both $\widehat D(A)$ and 
$\widetriangle D(A)$ have a cone point, labeled by $A$. We let $\widetriangle B(A)$ be the link of this cone point in $\widetriangle D(A)$, and $\widehat B(A)$ the link of the cone point in $\widehat D(A)$. Note that $\widehat B(A)$ is naturally a subcomplex of $\widetriangle B(A)$. 
If $A$ is dihedral, then we denote the generators of $\RA$ by $\{z_s,z_t,z_{\{s,t\}}\}$. In particular, $\Phi_N(z_{\{s, t\}}) = \Delta^{2N}$.

\begin{lemma}\label{l:disjoint}
If $A$ is a dihedral Artin group, then $\widetriangle B(A)$ is the disjoint union $$\widetriangle B(A) =  \bigsqcup_{i \in \zz} z_{\{s,t\}}^i\widehat B(A).$$
\end{lemma}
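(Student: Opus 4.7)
The plan is to exploit the fact that the central generator $z_{\{s,t\}}$ of $\RA$ splits off as a direct factor. Indeed, since $\{s\} \subseteq \{s,t\}$ and $\{t\} \subseteq \{s,t\}$, the presentation (\ref{eq:raag}) of $\RA$ yields $[z_{\{s,t\}}, z_s] = [z_{\{s,t\}}, z_t] = 1$, so $z_{\{s,t\}}$ is central and
\[
\RA \;\cong\; \widehat A \,\times\, \langle z_{\{s,t\}}\rangle,
\]
with $\widehat A = \langle z_s, z_t\rangle$ a free group of rank $2$. Every element $a \in \RA$ can therefore be written uniquely as $a = \widehat a\, z_{\{s,t\}}^{i(a)}$ with $\widehat a \in \widehat A$ and $i(a) \in \zz$.

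Next I would unpack the combinatorial structure of $\widetriangle B(A)$. Since the spherical subsets properly contained in $\{s,t\}$ are $\emptyset, \{s\}, \{t\}$, the vertices of $\widetriangle B(A)$ are the cosets $a\RA_T$ for these $T$, where $\RA_\emptyset = \{1\}$, $\RA_{\{s\}} = \langle z_s\rangle$, $\RA_{\{t\}} = \langle z_t\rangle$; edges are given by the natural inclusions of cosets. The analogous description identifies $\widehat B(A)$ as the subcomplex obtained by restricting $a$ to lie in $\widehat A$. In particular every $\RA_T$ appearing is already contained in $\widehat A$, so the formula $v \mapsto i(a)$ (for $v = a\RA_T$) gives a well-defined map from the vertex set of $\widetriangle B(A)$ to $\zz$.

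I would then check that this $\zz$-valued invariant is constant on edges. If $v_1 = a\RA_\emptyset$ and $v_2 = a'\RA_{\{s\}}$ are joined by an edge, then $a \in a'\langle z_s\rangle$, so $a = a' z_s^k$ for some $k$; since $z_s \in \widehat A$, $i(a) = i(a')$. The same argument works for $\RA_{\{t\}}$. Hence $\widetriangle B(A)$ decomposes as a disjoint union of subcomplexes indexed by $\zz$.

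Finally, I would identify the $i$-th component with $z_{\{s,t\}}^i \widehat B(A)$: the map $\widehat a \widehat A_T \mapsto z_{\{s,t\}}^i \widehat a \, \RA_T$ is well-defined because $\widehat A_T = \RA_T$ for $T \subsetneq \{s,t\}$, it is injective by the uniqueness in the product decomposition, and it is surjective onto the $i$-th component since any $a$ with $i(a) = i$ can be rewritten as $z_{\{s,t\}}^i \widehat a$ (here using centrality of $z_{\{s,t\}}$). The step that requires the most care is verifying that the invariant $i(\cdot)$ is indeed independent of the coset representative chosen, which is precisely what is ensured by $\RA_T \subset \widehat A$ for the relevant $T$; once this is in hand, the disjoint union decomposition is immediate.
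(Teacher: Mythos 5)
Your proof is correct and follows essentially the same approach as the paper: both exploit the direct-product splitting $\RA \cong \widehat A \times \langle z_{\{s,t\}}\rangle$ to define the $\zz$-valued invariant $i(\cdot)$ on vertices, check it is constant on edges, and then use the $z_{\{s,t\}}$-action to identify each level set with $\widehat B(A)$. You are somewhat more explicit than the paper about the well-definedness of $i(\cdot)$ on cosets (that $\RA_T \subset \widehat A$ for proper $T$), which the paper leaves implicit.
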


\begin{proof}
The adjacent vertices in $\widetriangle B(A)$ to $w\langle s \rangle$ are $w\emptyset$ and $wz_s\emptyset$, (and similarly for $w\langle t \rangle$). Since $w = gz_{\{s,t\}}^i$ and $wz_s = gz_sz_{\{s,t\}}^i$ for some $g \in F_2$, the power $z_{\{s,t\}}^i$ is the same for $w$ and $ws$. Therefore, any vertices that can be connected by a path in $\widetriangle B(A)$ have the same power of $z_{\{s,t\}}$. 
The $z_{\{s,t\}}$-action on $\widetriangle B(A)$ identifies each connected component of $\widetriangle B(A)$ with $\widehat B(A)$. 
\end{proof}

\begin{Prop}\label{prop:pi-distance preserving}
Let $A$ be a totally reducible, finite type Artin group with $m_{st} \ne 3$ for each factor. Then the induced map $\Phi_1: \Lk(A, \widetriangle D(A))\to \Lk(A, D(A))$ is $\pi$-distance preserving. For any totally reducible finite type Artin group, the induced map $\Phi_2: \Lk(A, \widetriangle D(A))\to \Lk(A, D(A))$ is $\pi$-distance preserving.
\end{Prop}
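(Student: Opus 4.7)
The plan is to reduce to the dihedral case via orthogonal join decompositions, and then to use Lemma~\ref{l:disjoint} together with the distance estimates from Proposition~\ref{lem:dihedral}.

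Since $A$ is totally reducible and of finite type, we write $A = A_1 \times \cdots \times A_k$ with each $A_i$ a dihedral Artin group or a cyclic group of order $2$, and correspondingly $\RA = \RA_1 \times \cdots \times \RA_k$. The link of $A$ in $D(A)$ and the link of $A$ in $\widetriangle D(A)$ both split as orthogonal joins over these factors, and $\widetriangle\Phi_N$ is the join of the maps on factors. By Lemma~\ref{l:ojoins}, an orthogonal join of $\pi$-distance preserving maps is $\pi$-distance preserving, so it suffices to verify the conclusion on each irreducible factor. The $\zz/2$ case is trivial, so I reduce to the case $A$ dihedral with $m = m_{st}$.

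For $A$ dihedral, Lemma~\ref{l:disjoint} decomposes $\widetriangle B(A)$ as the disjoint union $\bigsqcup_{j\in\zz} z_{\{s,t\}}^j \widehat B(A)$. For two points in the same component $z_{\{s,t\}}^j\widehat B(A)$, the map $\widetriangle\Phi_N$ agrees, after left multiplication by the central element $\Delta^{2Nj}$, with Charney's map $\widehat B(A)\to B(A)$, which she established is $\pi$-distance preserving in the course of embedding $\widehat D(A)$ isometrically into $D(A)$ \cite{charney}; so the within-component condition is handled. For two points in distinct components of $\widetriangle B(A)$ the source distance is $\infty$, and I must show their images lie at distance $\ge \pi$ in $B(A)$. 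Using left translation by elements of $\RA$ and the centrality of $\Delta^{2N}$ in $A$, this reduces to establishing
\[
d_{B(A)}\bigl(v,\ \Delta^{2Nj}\,h \cdot v'\bigr) \ge \pi
\]
for every $j \ne 0$, every $h$ in the free subgroup $\langle x_s^{2N}, x_t^{2N}\rangle \subset A$, and every pair of vertices $v, v' \in \widehat B(A) \subset B(A)$. After absorbing the coset data of $v$ and $v'$ into the free word $h$, this is exactly the content of Proposition~\ref{lem:dihedral}: for $m \ne 3$ and $N = 1$, its first statement applies; for $m > 3$ and $N = 2$, the first statement again suffices; and for $m = 3$ with $N = 2$, the third statement provides the required bound.

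The main obstacle is precisely the $m = 3$ case: in the braid group on three strands the elements of $\langle x_s^2, x_t^2\rangle$ can come very close to the central vertices $\Delta^{2j}$ in $B(A)$, and only by raising the power to $N = 2$ and restricting to the deeper free subgroup $\langle x_s^4, x_t^4\rangle$ does one force the required inequality. This is why Theorem~\ref{t:main3} requires $N = 2$ rather than $N = 1$, and why the third statement of Proposition~\ref{lem:dihedral} was tailored to yield its stronger bound in exactly this regime.
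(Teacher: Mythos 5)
Your high-level outline matches the paper's proof: reduce via orthogonal joins to the dihedral case, use Lemma~\ref{l:disjoint}, handle within-component points by Charney's result, and invoke Proposition~\ref{lem:dihedral} for cross-component points. However, there is a genuine gap in the final step: you invoke the \emph{first} statement of Proposition~\ref{lem:dihedral} (the bound $d_{B(A)}(\Delta^{2n},g)\ge\pi$) for $m_{st}\ne 3$, when the argument actually requires the \emph{second} statement's stronger bound $d_{B(A)}(\Delta^{2n},g)\ge\pi+\pi/m$.

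The bound $\ge\pi$ for pairs of $\emptyset$-type vertices does not suffice. Your reduction is to vertex pairs $(v,v')$, but Proposition~\ref{lem:dihedral} only addresses pairs of vertices that are both of type $\emptyset$ (cosets of the trivial subgroup): absorbing the coset data of a $\{s\}$-type vertex $g\langle s\rangle$ into the free word produces an element with odd $s$-exponent, which lies outside $\langle x_s^{2N},x_t^{2N}\rangle$ and is not covered by the proposition. To handle arbitrary points (or equivalently, vertices of all three types), one needs margin: a point on an edge of $B(A)$ lies within $\pi/2m$ of an $\emptyset$-type vertex, so the triangle inequality
\[
d(\Phi(x),\Phi(y)) \;\ge\; d(a,b) - d(\Phi(x),a) - d(\Phi(y),b) \;\ge\; d(a,b) - \tfrac{\pi}{m}
\]
yields $\ge\pi$ only when $d(a,b)\ge\pi+\pi/m$. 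Concretely, if $\Phi(x)=a\langle s\rangle$ and $\Phi(y)=b\langle s\rangle$ are $\{s\}$-type vertices and one only knows $d(a,b)\ge\pi$, the best one can deduce by triangle inequality is $d(a\langle s\rangle,b\langle s\rangle)\ge\pi-\pi/m$, which is short of $\pi$. So for $m>3$ you must cite the second statement of Proposition~\ref{lem:dihedral}, not the first (and for $m=3$, the third statement, as you do). This is exactly why Proposition~\ref{lem:dihedral} was stated with the strengthened $\pi+\pi/m$ bounds in the second and third parts; the first statement alone cannot support the $\pi$-distance-preserving conclusion.
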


\begin{proof}
In this proof $N=1$ or $2$, depending on whether there are $s,t$ with $m_{st}=3$.  
We have that $A$ and $\RA$ decompose as $$A = A_1 \times A_2 \times \dots \times A_k;\quad \RA = \RA_1 \times \RA_2 \times \dots \times \RA_k$$ where each $A_i$ is an irreducible spherical subgroup of rank $2$ or $\zz$. 
Therefore, both $B(A)$ and $\widetriangle B(A)$ decompose as spherical joins $$B(A) = B(A_1)\ast B(A_2) \ast \dots \ast B(A_k);\quad \widetriangle B(A) = \widetriangle B(A_1)\ast \widetriangle B(A_2) \ast \dots \ast \widetriangle B(A_k),$$ so by Lemma \ref{l:ojoins} it suffices to check $\pi$-distance preserving for each $A_i$.
If $A = \mathbb Z$ this is obvious, so suppose that $A$ is a dihedral Artin group. 

By \cite[Lem 4.1]{charney}, the induced map $\widehat \Phi_{N}:\widehat B(A) \to B(A)$ is $\pi$-distance preserving. Since the map $\Phi_N$ is equivariant, this implies that the induced map is $\pi$-distance preserving on each component $(\Delta^{2N})^i\widehat B(A)$. 
By Lemma \ref{l:disjoint}, it suffices to verify that for $x,y$ in two different copies of $\widehat D(A)$ in $\widetriangle D(A)$, 
their images $\widetriangle \Phi_N(x), \widetriangle \Phi_N(y)$ have distance at least $\pi$ in $D(A)$.
Let $x$ lie in an edge of $\Delta^{2n_1}\widehat B(A)$ and $y$ lie in an edge  of $\Delta^{2n_2}\widehat B(A)$, where $g_1, g_2\in \widehat A$, and $n_1\neq n_2$. Then $x$ and $y$ are within distance $\pi/2m$ from vertices $\Delta^{2n_1}g_1$ and $\Delta^{2n_2}g_2$.
By Proposition~\ref{lem:dihedral}, $$d_{B(A)}(\Delta^{2n_2}g_2, \Delta^{2n_1}g_1) = d_{B(A)}(\Delta^{2(n_2-n_1)}, g_1g_2^{-1})\geq \pi + \pi/m.$$ 
This implies that the images of $x$ and $y$ are $\ge \pi$ apart.
\end{proof}

\begin{Theorem}\label{thm:locally reducible}
The map $\Phi_2:RA\to A$ is injective for every locally reducible Artin group. If $m_{st} \ne 3$ for all $s,t \in S$, then $\Phi_1:RA\to A$ is injective.
\end{Theorem}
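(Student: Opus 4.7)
The proof adapts Charney's strategy \cite{charney} for the original Tits Conjecture to the enlarged complex $\widetriangle D(A)$. The plan is to show that the $\Phi_N$-equivariant map $\widetriangle \Phi_N: \widetriangle D(A) \to D(A)$, which is cellular and sends each copy $a \times K$ isometrically onto $\Phi_N(a) \times K$, is an isometric embedding. From this, injectivity of $\Phi_N$ follows immediately: for $g \in \ker \Phi_N$, equivariance gives $\widetriangle \Phi_N(gx) = \Phi_N(g)\widetriangle \Phi_N(x) = \widetriangle \Phi_N(x)$ for every $x$, which by injectivity of $\widetriangle \Phi_N$ forces $gx = x$ for all $x$; in particular $g$ stabilizes $\widetriangle v_\emptyset$, whose stabilizer in $\RA$ is $\RA_\emptyset = 1$.

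By Lemma \ref{l:isomembedding}, to conclude $\widetriangle \Phi_N$ is an isometric embedding it suffices to verify (a) $\widetriangle D(A)$ is CAT(0), and (b) the induced map on every vertex link is $\pi$-distance preserving. Both reduce uniformly to the link analysis. The link of $a\widetriangle v_T$ (with $T$ spherical) in $\widetriangle D(A)$ decomposes as the orthogonal join $\Lk_{K_{\geq T}}(v_T) \ast \widetriangle B(\RA_T)$, mapping identically on the first factor and via $\widetriangle \Phi_N$ on the second to the link $\Lk_{K_{\geq T}}(v_T) \ast B(A_T)$ in $D(A)$. By Lemma \ref{l:ojoins}, statement (b) reduces to $\pi$-distance preservation on the second factor, which is exactly Proposition \ref{prop:pi-distance preserving} applied to the locally reducible spherical subgroup $A_T$ (with $N=1$ in the case no $m_{st}=3$, and $N=2$ otherwise). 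Similarly, local CAT(0)-ness of $\widetriangle D(A)$ reduces to CAT(1)-ness of $\widetriangle B(\RA_T)$, which holds because Lemma \ref{l:disjoint} decomposes it as a disjoint union of copies of $\widehat B(\RA_T)$, each CAT(1) by Charney's original argument, and orthogonal joins of CAT(1) complexes remain CAT(1).

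The remaining piece for (a) is simple connectedness of $\widetriangle D(A)$, which I would establish by viewing $\widetriangle D(A)$ as the development of a simple complex of groups over the contractible fundamental domain $K$ with local groups $\RA_T$; since the presentation (\ref{eq:raag}) of $\RA$ is generated exactly by the relations sitting inside these local subgroups, the fundamental group of this complex of groups equals $\RA$, and hence the development is simply connected. The principal technical obstacle in the proof, namely the dihedral $\pi$-distance estimate separating distinct $\Delta^{2N}$-translates of $\widehat B(A)$, has already been settled in Proposition \ref{prop:pi-distance preserving}; the remaining steps above are routine adaptations of Charney's framework to the enlarged complex.
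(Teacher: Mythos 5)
Your outline tracks the paper's proof closely---reduce to showing $\widetriangle\Phi_N$ is an isometric embedding, reduce that to a link-by-link check via the orthogonal-join decomposition, and feed in Proposition~\ref{prop:pi-distance preserving}---but you have inverted the hypothesis of Lemma~\ref{l:isomembedding}, and this leaves a genuine gap. The ``furthermore'' clause of that lemma requires the \emph{codomain} $X'=D(A)$ to be CAT(0), not the domain $X=\widetriangle D(A)$. Your conditions (a) and (b) do not suffice to promote a local isometric embedding to a global one: the universal covering of a flat square torus by the Euclidean plane has CAT(0) domain, is cellular and sends geodesics to local geodesics, and its link maps are isometries (hence $\pi$-distance preserving), yet it is not injective. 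The missing ingredient, which the paper's proof uses, is Charney's theorem \cite{charney} (restated in Section~\ref{s:lr}) that $D(A)$ with the Moussong metric is CAT(0) for every locally reducible Artin group.

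Once that is in hand, your condition (b) alone finishes the argument: a local isometric embedding into a CAT(0) space is automatically a global isometric embedding onto a convex subspace (Bridson--Haefliger II.4.14), and in particular $\widetriangle D(A)$ is then CAT(0) as a \emph{consequence}. So your extra paragraph---proving local CAT(0)-ness from link CAT(1)-ness, and simple connectedness of $\widetriangle D(A)$ via a complex-of-groups development over $K$---is correct as far as I can see, but entirely redundant, and does not substitute for the required CAT(0)-ness of $D(A)$, which your proposal never invokes. Aside from this, the rest of your argument (equivariance reducing the link check to vertices of $K$, the join decomposition $\Lk_{K_{\geq T}}(v_T)\ast\widetriangle B(A_T)$, and the appeal to Proposition~\ref{prop:pi-distance preserving}) matches the paper's proof.
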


\begin{proof}
In this proof $N=1$ or $2$, depending on whether there are $s,t$ with $m_{st}=3$.  
We prove that the induced map $\widetriangle \Phi_N:\widetriangle D(A)\to D(A)$ is an isometric embedding. 
By Lemma \ref{l:isomembedding}, $\widetriangle \Phi_N:\widetriangle D(A)\to D(A)$ is an isometric embedding provided that the map $\Lk(x,\widetriangle D(A))\to \Lk(\widetriangle \Phi_N(x), D(A))$ induced by $\widetriangle \Phi_N$ is $\pi$-distance preserving for every $x\in\widetriangle D(A)$. We only check this where $x$ is a vertex, essentially the same argument works for all $x$.
Since $\widetriangle \Phi_N$ is equivariant, it suffices to check vertices of $K$. For $T$ a spherical subset and $v_T \in \widetriangle D(A) \cap K$, the link of $v_T$ decomposes as $\Lk(x,K_{\ge T}) \times \widetriangle B(A_T)$. 
Since the link of $v_T$ in $D(A)$ decomposes as $\Lk(x,K_{\ge T}) \times B(A_T)$ and the map between links decomposes as $\Id \times \Phi_N$ the result follows from Proposition~\ref{prop:pi-distance preserving}. 
\end{proof}

\subsection{Intersections with special subgroups}

Finally, we use a coning trick to show that for any locally reducible Artin group $A$ and special subgroup $A_T$, we have that our $\RAAG$ subgroup for $A$ intersects $A_T$ in the $\RAAG$ subgroup for $T$. 
Charney used CAT(0) geometry to prove this for the $\RAAG$ provided by the original Tits Conjecture \cite[Thm 5.2]{charney}. 
Since $\Phi_2: \RA \rightarrow A$ injective, by Theorem~\ref{thm:locally reducible}, we write $\RA_T$ for the image of $\RA_T$ under $\Phi_2$.
\begin{Prop}\label{p:cone} Let $A$ be a locally reducible Artin group. Then $\RA\cap A_T = \RA_T$.
\end{Prop}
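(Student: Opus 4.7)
The inclusion $\RA_T \subseteq \RA \cap A_T$ is immediate, so the work lies in the reverse containment. My plan is to exploit Theorem~\ref{thm:locally reducible}, which provides an $\RA$-equivariant isometric (in particular injective) embedding $\widetriangle\Phi_2 : \widetriangle D(A) \to D(A)$, together with the fact that $A_T$ is itself locally reducible so that the analogous map $\widetriangle\Phi_2^T : \widetriangle D(A_T) \to D(A_T)$ is also an isometric embedding.

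I would first dispose of the case when $T$ is spherical. Here $A_T$ is precisely the $A$-stabilizer of the vertex $v_T \in K \subseteq D(A)$, and $\RA_T$ is the $\RA$-stabilizer of the corresponding vertex $\widetriangle v_T \in K \subseteq \widetriangle D(A)$. Given $g \in \RA \cap A_T$, the element $\Phi_2(g)$ fixes $v_T$; since $\widetriangle\Phi_2$ is equivariant and injective, the vertex $\widetriangle v_T$ is the unique preimage of $v_T$, forcing $g$ to fix $\widetriangle v_T$ and hence to lie in $\RA_T$.

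For general $T$ I would adapt the coning argument from Charney's proof of \cite[Thm 5.2]{charney}. The key geometric observation is that $A_T$ is the setwise $A$-stabilizer of the convex subcomplex $A_T \cdot K^T \subseteq D(A)$ (naturally identified with $D(A_T)$, where $K^T \subseteq K$ is the fundamental subdomain for $A_T$), and symmetrically $\RA_T$ is the setwise $\RA$-stabilizer of $\RA_T \cdot K^T \subseteq \widetriangle D(A)$. One cones off these convex subcomplexes with a suitable Moussong-compatible metric to produce enlarged CAT(0) spaces $D^+(A)$ and $\widetriangle D^+(A)$ in which a new apex vertex has $A$-stabilizer exactly $A_T$ and $\RA$-stabilizer exactly $\RA_T$ respectively. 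The map $\widetriangle\Phi_2$ extends equivariantly, sending apex to apex; a link calculation in the same spirit as Proposition~\ref{prop:pi-distance preserving}, using Lemmas~\ref{l:isomembedding} and \ref{l:ojoins}, shows that the extended map is still $\pi$-distance preserving on the enlarged links, so it remains an isometric embedding. Applying the spherical-case vertex-stabilizer argument to the apex then gives $g \in \RA_T$.

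The main obstacle will be setting up the coning rigorously. First I must verify that the natural inclusion $\widetriangle D(A_T) \hookrightarrow \widetriangle D(A)$ is an isometric embedding onto a convex subcomplex, along the lines of the Deligne-complex analysis in \cite{cd1} for $D(A_T) \hookrightarrow D(A)$, and second that the cone construction produces a CAT(0) space with apex metric compatible with the Moussong structure so that the extended $\widetriangle\Phi_2^+$ is still $\pi$-distance preserving at the apex link. Both should follow from the orthogonal-join techniques used in Section~\ref{s:lr}, but require a careful choice of apex angle; once these geometric inputs are in place, the identification of the apex stabilizer with $\RA_T$ follows directly from the defining equivalence relation on $\widetriangle D(A)$.
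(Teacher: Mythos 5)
Your proposal takes a genuinely different, and considerably more laborious, route than the paper. The spherical-$T$ vertex-stabilizer argument is correct: equivariance and injectivity of $\widetriangle \Phi_2$ do force $g$ to fix $\widetriangle v_T$, and the $\RA$-stabilizer of that vertex is $\RA_T$ by construction. But your general case leaves the real work undone: you have not verified that $\widetriangle D(A_T)$ embeds in $\widetriangle D(A)$ as a convex subcomplex, nor that the coned spaces are CAT(0), nor that the extended map remains $\pi$-distance preserving at the apex link. These are not routine corollaries of the orthogonal-join lemmas --- they amount to essentially re-proving Theorem~\ref{thm:locally reducible} for a new, enlarged complex, which is substantial.

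The paper sidesteps all of this with an algebraic coning. Rather than coning off a subcomplex of the Deligne complex, add a new \emph{generator} $s$ to $S$ with $m_{st}=2$ for all $t\in T$ and $m_{su}=\infty$ for $u\notin T$. The resulting Artin group $\widetilde A$ is again locally reducible, so Theorem~\ref{thm:locally reducible} applies to it directly. Its associated $\RAAG$ is $\RA$ with one extra generator $z_{\{s\}}$ whose centralizer is $\RA_T\times\langle z_{\{s\}}\rangle$. If some $w\in\RA\setminus\RA_T$ landed in $A_T$, then $\Phi_2(w)$ would commute with $s$ in $\widetilde A$, while $w$ and $z_{\{s\}}$ fail to commute in the $\RAAG$ --- contradicting injectivity. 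This treats spherical and non-spherical $T$ uniformly, requires no new CAT(0) verification, and reuses the main theorem as a black box; your geometric cone is morally the same object, but constructing it by hand forfeits exactly the efficiency the algebraic version provides.
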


\begin{proof}
Suppose that there was a reduced word $w$ in $\RA$ so that $w \notin \RA_T$ but $w \in A_T$. Define a larger Artin group $\widetilde A$ by ``coning" off $T$, i.e.\ introduce a new generator $s$ so that $m_{st} = 2$ for all $t \in T$ and $m_{su} = \infty$ otherwise. 
Then $\widetilde A$ is a locally reducible Artin group, so we know the Generalized Tits Conjecture for $\widetilde A$. 
Note that the $\RAAG$ for $\widetilde A$ is just the $\RAAG$ for $A$ with the $\RAAG$ for $A_T$ coned off. Now, by assumption we have that $[s,w] = 1$, which contradicts the $\RAAG$ for $\widetilde A$ injecting into $\widetilde A$, as the centralizer of $s$ in that $\RAAG$ is the $\RAAG$ subgroup $\RA_T \times \langle s \rangle$. 
\end{proof}

\begin{remark}
The same argument shows that the $\RAAG$ subgroup that Crisp and Paris find intersects each special Artin group in the expected way. 
If we knew the Generalized Tits Conjecture for all Artin groups, then we would know this intersection property as well. 
\end{remark}

\section{$\RAAG$ subgroups of $\RAAG$s}\label{s:raags}

In this section, we study whether a subgroup of a $\RAAG$ generated by words which are powers of commuting elements is the ``obvious" $\RAAG$. In Sections \ref{s:small} and \ref{s:folding} we apply these ideas to the Generalized Tits Conjecture, but we hope that this will be of independent interest.

In general, this is a delicate question; the subgroup may be a $\RAAG$ but a different one than expected, or may not be a $\RAAG$ at all. See \cite{kim-koberda} and \cite{koberda} for a detailed analysis and many (positive and negative) examples.
Our main goal in this section is Theorem~\ref{thm:generalized PP}, which generalizes a condition on the words given in \cite{koberda} called \emph{Property PP} (short for ping-pong). 

\subsection{Koberda's Property PP}

Let $L$ be a flag complex and $\RA_L$ the corresponding $\RAAG$.  
For each simplex $\gs \in L$, let $w_\sigma$ be a (possibly trivial) word with all positive powers (or all negative powers) of the generators corresponding to vertices in $\gs$ (in particular, if $w_\gs$ is nontrivial it has a nontrivial power of each generator in $\gs^{(0)}$). 

The collection $\{w_\gs\}_{\gs \in L}$ determines a flag complex $L'$. The vertex set of $L'$ is $\{\gs\in L : w_\gs \text{ is nontrivial}\}$ (the reader can imagine the vertex at the barycenter of $\gs$), and the simplices correspond to collections $\gs_1,\dots, \gs_k$ where $w_{\gs_1},\dots w_{\gs_k}$ pairwise commute in $\RA_L$. Of course, we have that $w_\gs$ and $w_\gt$ commute if any only if $\gs$ and $\gt$ span a simplex in $L$.

\begin{definition}\label{def:PP}
We say that the collection $\{w_\gs\}_{\gs \in L}$ satisfies \emph{Property $PP$} if there is an injective simplicial map $p: L' \rightarrow L$, so that
\begin{itemize} 
\item $p(L')$ is a full subcomplex of $L$, 
\item for every vertex $\gs$ of $L'$ we have $p(\gs)\in \gs$, and
\item if vertices $\gs, \gt$ are joined by an edge in $L'$, 
then $p(\gs)\notin \gt$. 
\end{itemize}
\end{definition}
One can think of this map $p$ as choosing a representative vertex in $L$ for each $w_\sigma$. 
The requirement that $p(L')$ is a full subcomplex ensures that if two words $w_\gs$ and $w_{\gs'}$ do not commute, then their representative vertices do not commute. 
The following proposition of Koberda  follows quickly from the normal form for $\RAAG$'s.


\begin{Prop}[{\cite[Lem 4.2]{koberda}}]\label{prop:koberda subgroup}
Let $w_\gs$ have Property $PP$. 
Then the map $f:\RA_{L'} \to \RA_L$ sending the generators $\sigma$ of $\RA_{L'}$ to $w_\gs$, is injective. Therefore, the subgroup generated by $\langle w_\gs \rangle$ is isomorphic to $\RA_{L'}$. 
\end{Prop}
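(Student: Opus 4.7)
The plan is to use the normal form for RAAGs together with a retraction onto $\RA_{p(L')}$ to reduce injectivity of $f$ to a concrete statement about reduced words. Since $p(L')$ is a full subcomplex of $L$, there is a well-defined retraction $r \colon \RA_L \twoheadrightarrow \RA_{p(L')}$ that fixes each vertex of $p(L')$ and sends every other generator to the identity --- the defining commutation relations of $\RA_L$ are preserved under this assignment. The simplicial isomorphism $p \colon L' \xrightarrow{\sim} p(L')$ induces an isomorphism $g \colon \RA_{L'} \xrightarrow{\sim} \RA_{p(L')}$ sending $\sigma \mapsto v_\sigma := p(\sigma)$. Thus it suffices to show that the endomorphism $F := g^{-1} \circ r \circ f \colon \RA_{L'} \to \RA_{L'}$ is injective.

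To understand $F$, for each $\sigma \in L'$ decompose $w_\sigma = v_\sigma^{m_\sigma} u_\sigma$ inside $\RA_L$, where $m_\sigma \neq 0$ by hypothesis and $u_\sigma$ is a monomial in the remaining vertices of $\sigma$; all of these commute with $v_\sigma$ since $\sigma$ is a simplex. Any vertex $v_\tau$ of $\sigma$ that happens to lie in $p(L')$ spans an edge with $v_\sigma$ inside $L$, and the fullness hypothesis then forces $\tau$ and $\sigma$ to span an edge in $L'$. Consequently $F(\sigma) = \sigma^{m_\sigma} \zeta_\sigma$, where $\zeta_\sigma := g^{-1}(r(u_\sigma)) \in \RA_{L'}$ is a word in generators adjacent to $\sigma$, and in particular commutes with $\sigma$.

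To see $F$ is injective, take a nontrivial reduced word $W = \sigma_1^{n_1} \cdots \sigma_k^{n_k}$ in $\RA_{L'}$ and compute
\[
F(W) = \sigma_1^{m_1 n_1}\zeta_1^{n_1} \cdots \sigma_k^{m_k n_k}\zeta_k^{n_k},
\]
using $[\zeta_i,\sigma_i]=1$. I would then argue via the standard RAAG normal form that $F(W)$ is nontrivial: any sequence of commutations and cancellations collapsing $F(W)$ would eventually have to cancel a main syllable $\sigma_i^{m_i n_i}$ against some $\sigma_j^{m_j n_j}$ (necessarily with $\sigma_i = \sigma_j$), and this would require $\sigma_i$ to commute in $\RA_{L'}$ with every intermediate main syllable $\sigma_\ell^{m_\ell n_\ell}$; this same commutation would permit the matching reduction in $W$, contradicting reducedness. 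The main obstacle is ruling out "hidden" cancellations between the main $\sigma_i$-syllables and auxiliary copies of $\sigma_i$ appearing inside some $\zeta_\ell$; this is exactly where Property PP earns its keep, since fullness of $p(L')$ is precisely what constrains each $\zeta_\ell$ to involve only generators adjacent to $\sigma_\ell$ in $L'$, and a careful syllable-count using this constraint completes the argument.
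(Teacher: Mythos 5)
The reduction to showing $F := g^{-1}\circ r\circ f$ is injective is a valid sufficient condition, but $F$ need not be injective even when $f$ is, so the ``careful syllable-count'' you defer to does not exist. Concretely, let $L$ be the $3$-simplex on vertices $\{a,b,x,y\}$, so $\RA_L\cong\zz^4$, and take the only nontrivial words to be $w_\sigma=abx$ for $\sigma=\{a,b,x\}$ and $w_\tau=aby$ for $\tau=\{a,b,y\}$. Then $L'$ is the single edge $\{\sigma,\tau\}$ and $\RA_{L'}\cong\zz^2$. The choice $p(\sigma)=a$, $p(\tau)=b$ satisfies Property PP: it is injective and simplicial, and $p(L')=\{a,b\}$ is a full subcomplex of $L$. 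But the retraction $r$ kills $x$ and $y$, so $r(w_\sigma)=ab=r(w_\tau)$; in your notation $F(\sigma)=\sigma\tau=F(\tau)$ (with $m_\sigma=m_\tau=1$, $\zeta_\sigma=\tau$, $\zeta_\tau=\sigma$), and $F(\sigma\tau^{-1})=1$. Meanwhile $f(\sigma)=abx$ and $f(\tau)=aby$ are independent in $\zz^4$, so $f$ is injective.

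This is exactly the ``hidden cancellation'' you flag as the main obstacle: the main syllable $\sigma$ from $F(\sigma)$ cancels against the auxiliary $\sigma^{-1}$ inside $\zeta_\tau^{-1}$. The fullness constraint you invoke is satisfied here---$\zeta_\tau=\sigma$ is indeed adjacent to $\tau$ in $L'$---but that is precisely what lets $\zeta_\tau$ commute into position and meet the main $\sigma$-syllable, so it does not block the cancellation. The underlying problem is that the retraction onto $\RA_{p(L')}$ discards exactly the generators (here $x$ and $y$) that witness the nontriviality of $f(W)$; with the other valid choice $p(\sigma)=x$, $p(\tau)=y$ your $F$ would be the identity, but Property PP does not in general supply such a ``private'' transversal. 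A correct normal-form argument must stay in $\RA_L$: given a reduced word $W$ in $\RA_{L'}$, one should locate a letter in $f(W)$ that cannot cancel, using fullness to control which blocks $w_{\sigma_j}^{\pm 1}$ can commute past one another; that letter need not be the marked vertex $p(\sigma_i)$, and the argument cannot be routed through $r$.
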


We note that the statement of property PP in \cite{koberda} does not include the third condition of Definition~\ref{def:PP}.
However without that assumption Proposition~\ref{prop:koberda subgroup} fails to hold:
\begin{example}\label{e:PP failure}
Consider a graph on four vertices $s,t,u,x$ where $s,t,u$ are pairwise adjacent, and $x$ is adjacent to $u$. 
The associated Artin group $A$ is $(\mathbb Z^2*\mathbb Z)\times \mathbb Z$.
Let ${st, stu, x}$ be a collection of words and let $L'$ be the corresponding flag complex, which is the disjoint union of an interval joining $st$ and $stu$ and a point $x$. The map $p:L'\to L$ sending $st\mapsto s$, $stu\mapsto t$ and $x\mapsto x$ satisfies the first two conditions in Definition~\ref{def:PP}. However, the subgroup $\langle st, stu, x\rangle$ is not isomorphic to the associated RAAG $\mathbb Z^2*\mathbb Z$ since $[st(stu)^{-1}, x] = 1$.
\end{example}
The proof of Lemma 4.2 in \cite{koberda} is correct for the property PP as stated in Definition~\ref{def:PP}. 
The last sentence of the second paragraph of the proof is not true without the third condition of Definition~\ref{def:PP}.

\begin{example}\label{e:badpp}
The following example is taken from \cite{koberda}, where it is attributed to M. Casals. Let $\RA_L = F_2 \times F_2$ where the first $F_2$ is generated by $a,c$ and the second generated by $b,d$. Consider the subgroup $H_n < \RA_L$ generated by $a^n, d^n$ and $(bc)^n$ for $n \in \mathbb N$. It turns out that $H_n$ is not isomorphic to any right-angled Artin group. We will not provide the full proof of this, the key point is that in this group there is the relation $$[a^n, (bc)^nd^n(bc)^{-n}] = 1.$$
Note that this collection of words does not satisfy property PP, in this case $L'$ is the disjoint union of an edge and a point, and hence there is no injective map from $L' \rightarrow L$ where the image is a full subcomplex. 
\end{example}

\subsection{Generalization of Property PP}
We start with a motivating example. 
\begin{example}
Let $L$ be a path with $4$ vertices $a,b,c,d$, and let $\RA_L$ be the corresponding $\RAAG$. Consider the subgroup $H$ generated by $\{a,d,bc\}$. By the same reasoning as Example \ref{e:badpp}, this collection of words do not satisfy property $PP$ (in this case $L'$ is $3$ points and there is no injective map from $3$ points to $L$ with image a full subcomplex). However, we claim that the subgroup $H$ generated by these words is still isomorphic to the free group $F_3$.

To see this, note that $\RA_L$ splits as the amalgamated product $$\RA_L = \langle a,b,c \rangle \ast_{\langle b,c \rangle} \langle b,c,d \rangle$$ and $F_3$ decomposes as $F_2 \ast_\zz F_2$. 
We can use Property $PP$ to say that the subgroups generated by $\langle a,bc \rangle$ and $\langle bc,d \rangle$ inside $\langle a,b,c \rangle$ and $\langle b,c,d \rangle$ respectively are both $F_2$. 
Furthermore, each of these subgroups intersects $\langle b,c \rangle$ in $\langle bc \rangle$ (this is not completely obvious and generalizing this is the majority of our work below). Therefore, we can apply the following lemma of Serre:
\end{example}

\begin{Prop}[{\cite[Chap 1 Prop 3]{serre}}]\label{p:serre} 
Let $G_i$ be a collection of groups with common subgroup $A$ and let
 $\aster_A, G_i$ denote the amalgamated product. Let $H_i\subseteq G_i$ be subgroups and suppose the intersection $B=H_i\cap A$ is independent of $i$. Then the natural homomorphism $\aster_B\ H_i \to \aster_A\, G_i$ is injective.
\end{Prop}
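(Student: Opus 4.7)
The strategy is to invoke the normal form theorem for amalgamated products. Recall that for the generalized amalgam $\aster_A G_i$, once one chooses for each $i$ a set $\mathcal{T}_i' \subset G_i$ of representatives of the right cosets $A \backslash G_i$ with $1 \in \mathcal{T}_i'$, every element admits a unique normal form $a \cdot t_1 t_2 \cdots t_n$ with $a \in A$, $t_k \in \mathcal{T}_{i_k}' \setminus \{1\}$, and $i_k \neq i_{k+1}$; an analogous statement holds for $\aster_B H_i$ with transversals $\mathcal{T}_i \subset H_i$ of $B \backslash H_i$.

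The key step is to choose the $\mathcal{T}_i'$ compatibly with the $\mathcal{T}_i$. Begin by fixing transversals $\mathcal{T}_i$ of $B \backslash H_i$ with $1 \in \mathcal{T}_i$. If $t, t' \in \mathcal{T}_i$ satisfy $At = At'$, then $t'(t)^{-1} \in A \cap H_i = B$, which forces $Bt = Bt'$ and hence $t = t'$. Thus $\mathcal{T}_i$ injects into $A \backslash G_i$, and we extend it to a transversal $\mathcal{T}_i' \supset \mathcal{T}_i$ of $A \backslash G_i$. With these compatible choices, any normal-form expression $b \cdot t_1 t_2 \cdots t_n$ in $\aster_B H_i$ automatically satisfies the normal form conditions for $\aster_A G_i$ as well, because $b \in B \subset A$, each $t_k \in \mathcal{T}_{i_k}' \setminus \{1\}$, and the indices still alternate. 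Uniqueness of normal form in $\aster_A G_i$ then implies that any element of $\aster_B H_i$ mapping to $1$ must have $n = 0$ and $b = 1$, proving injectivity.

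There is essentially no obstacle beyond setting up the right framework; the entire content is the compatible choice of transversals, which is exactly what the hypothesis $B = H_i \cap A$ delivers. A more conceptual alternative would have $\aster_B H_i$ act on the Bass-Serre tree of $\aster_A G_i$ and identify an invariant subtree realizing the star-shaped graph of groups with vertex groups $H_i$ and edge groups $B$, but the normal-form route is the most direct.
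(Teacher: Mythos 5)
Your proof is correct. The paper does not supply its own argument for Proposition~\ref{p:serre}; it simply records the statement as a known lemma of Serre (citing his \emph{Trees}), so there is no internal proof to compare against. Your normal-form route is the standard one, and the one substantive step is handled correctly: the hypothesis $H_i\cap A = B$ is used exactly where it must be, namely to show that a transversal $\mathcal{T}_i$ of $B\backslash H_i$ represents distinct right $A$-cosets in $G_i$ (if $t,t'\in\mathcal{T}_i$ satisfy $At=At'$ then $t'(t)^{-1}\in A\cap H_i=B$, forcing $Bt=Bt'$ and hence $t=t'$), and therefore extends to a transversal $\mathcal{T}_i'$ of $A\backslash G_i$. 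With these compatible transversals, a normal-form word $b\,t_1\cdots t_n$ for $\aster_B H_i$ is literally a normal-form word for $\aster_A G_i$, and injectivity is then immediate from uniqueness of normal forms. Nothing is missing, and the Bass--Serre alternative you sketch, while also valid, is indeed unnecessary here.
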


So, in the above example, we get an injection from $F_3 \cong F_2 \ast_\zz F_2 \rightarrow \langle a,bc,d \rangle$, which is obviously an isomorphism.

For general $\RAAG$'s, we suppose the following: our nerve $L$ decomposes as $L = L_1 \cup_{L_0} L_2$, where each $L_i$ is a full subcomplex. 
We consider a collection of words $\{w_\gs\}_{\gs\in L}$ where, as before, $w_\sigma$ is a (possibly trivial) word with all positive powers (or all negative powers) of the generators corresponding to vertices in $\gs$. We assume that each of the collections $\{w_\gs\}_{\gs\in L_1}$, $\{w_\gs\}_{\gs\in L_2}$ satisfies Property PP in $\RA_{L_1}$, $\RA_{L_2}$ respectively. Note that the functions in property $PP$ for each $L_i$ do not have to agree on the words for $L_0$ (if they do it is easy to see that the words already satisfy property $PP$). However, we have to make some additional assumptions, to conclude that $\RA_{L'}$ embeds in $\RA_L$.

\begin{definition}\label{d:avoids}
Let $L$ be a flag complex and $L_0$ a full subcomplex. Suppose we have a collection of words $\{w_\gs\}_{\gs \in L}$ satisfying property $PP$, and $L'$ is the associated flag complex and function $p:L'\to L$. We say that the collection $\{w_\gs\}_{\gs \in L}$ \emph{avoids $L_0$} if $\gs \notin L_0$ implies that $p(\gs) \notin L_0$. 

\end{definition}

The next lemma guarantees that if $\{w_\gs\}_{\gs \in L}$ satisfies property $PP$ and avoid $L_0$, then the intersection of the $\RAAG$ subgroup (which is guaranteed by the Koberda's Property PP) generated by the $\{w_\gs\}$ intersects the special subgroup $A_{L_0}$ as expected. 

\begin{lemma}\label{l:avoids}
Let $L$ be a flag complex, $\RA_L$ the $\RAAG$ on $L$, and let $L_0$ be a full subcomplex. Let $\{w_\gs\}_{\gs \in L}$ be words satisfying Property $PP$ avoiding $L_0$. Let $\RA_{L'}$ be the $\RAAG$ subgroup of $\RA_L$ generated by the $\{w_\gs\}_{\gs \in L}$, and $\RA_{L_0'}$ the $\RAAG$ subgroup generated by $\{w_\gs\}_{\gs \in L_0}$. 
Then $\RA_{L'} \cap \RA_{L_0} = \RA_{L_0'}$. 
\end{lemma}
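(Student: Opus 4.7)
Only the containment $\RA_{L'} \cap \RA_{L_0} \subseteq \RA_{L_0'}$ requires argument, the reverse being immediate. Property PP (Proposition~\ref{prop:koberda subgroup}) supplies an injection $\RA_{L'} \hookrightarrow \RA_L$, so I would view $g \in \RA_{L'} \cap \RA_{L_0}$ as an element of $\RA_{L'}$ and fix a reduced expression $g = w_{\sigma_1}^{n_1} \cdots w_{\sigma_k}^{n_k}$ in this RAAG. The goal is to show that every $\sigma_j$ appearing in this expression lies in $L_0$, which then forces $g \in \RA_{L_0'}$.

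Suppose toward a contradiction that some $\sigma_i$ in the $\RA_{L'}$-support lies outside $L_0$, and set $v := p(\sigma_i)$. By condition (1) of Definition~\ref{d:avoids}, $v \notin L_0$; by condition (2), the letter $v$ appears in no $w_\tau$ for $\tau$ a vertex of $L'$ other than $\sigma_i$. I would aim to show $v \in \mathrm{supp}_L(g)$, which contradicts $g \in \RA_{L_0} \subseteq \RA_{L \setminus \{v\}}$. For this I would use the amalgamated-product decomposition $\RA_L = \RA_{L \setminus \{v\}} \ast_{\RA_{\mathrm{link}(v)}} \RA_{\mathrm{star}(v)}$. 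By condition (2), each $w_{\sigma_j}$ with $\sigma_j \neq \sigma_i$ lies in $\RA_{L \setminus \{v\}}$, while $w_{\sigma_i} \in \RA_{\mathrm{star}(v)}$ has nonzero $v$-exponent and thus represents a nontrivial coset of $\RA_{\mathrm{link}(v)}$. Grouping consecutive $\RA_{L \setminus \{v\}}$-factors rewrites $g$ as
\[
g = u_0 \cdot w_{\sigma_i}^{m_1} \cdot u_1 \cdots w_{\sigma_i}^{m_p} \cdot u_p
\]
with each $u_l \in \RA_{L \setminus \{v\}}$, each $m_l \neq 0$, and $p \geq 1$ since $\sigma_i$ appears in the reduced $\RA_{L'}$-expression of $g$. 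Reducedness in $\RA_{L'}$ further forces, for every middle index $1 \leq l \leq p-1$, the presence in $u_l$ of some factor $w_{\sigma_l}$ that fails to commute with $w_{\sigma_i}$ in $\RA_L$; fullness of $p(L')$ in $L$ then gives $p(\sigma_l) \notin \mathrm{link}_L(v)$, so $w_{\sigma_l}$ uses a letter outside $\mathrm{star}(v)$.

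The main obstacle is to promote this observation to $u_l \notin \RA_{\mathrm{link}(v)}$, that is, to verify that the offending letter $p(\sigma_l)$ actually persists in $\mathrm{supp}_L(u_l)$ rather than being cancelled by other factors inside $u_l$. I would handle this by induction on $|L \setminus L_0|$: one checks that the restricted setup on $L \setminus \{v\}$ with collection $\{w_\tau : \tau \in L', \ \tau \neq \sigma_i\}$ still satisfies Property PP (since removing $\sigma_i$ from $L'$ removes exactly the preimage of $v$ under $p$, preserving fullness of the image) and still avoids $L_0$, so the inductive form of the lemma applies inside the smaller ambient $\RA_{L \setminus \{v\}}$ and controls how supports behave in $u_l$. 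Once each middle $u_l$ is confirmed to lie outside $\RA_{\mathrm{link}(v)}$, the amalgamated-product word above is Bass--Serre reduced of length $p \geq 1$, so $g \notin \RA_{L \setminus \{v\}}$, and in particular $v \in \mathrm{supp}_L(g)$, giving the contradiction that completes the proof.
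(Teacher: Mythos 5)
The high-level idea --- using the amalgam
$\RA_L = \RA_{L\setminus v}\ast_{\RA_{\Lk(v)}}\RA_{\St(v)}$ around $v=p(\sigma_i)$
for some $\sigma_i\notin L_0$, and arguing that $g$ then has positive
Bass--Serre syllable length --- is the same as in the paper, but your
execution of the key step has a genuine gap, and you omit a case that
the paper must (and does) handle separately.

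The gap is in promoting ``$u_l$ contains some $w_{\sigma_l}$ with
$p(\sigma_l)\notin\Lk(v)$'' to ``$u_l\notin \RA_{\Lk(v)}$.''  What you
actually need here is
$\RA_{(L\setminus v)'}\cap \RA_{\Lk(v)}=\RA_{\Lk(v)'}$ (then
$u_l\in\RA_{\Lk(v)}$ would force $u_l\in\RA_{\Lk(v)'}$, which a
retraction argument immediately rules out).  But that is the statement
of the lemma with $L_0$ replaced by $\Lk(v)$, and you have only arranged
for the restricted collection to avoid $L_0$, not $\Lk(v)$.  Avoiding
$\Lk(v)$ requires Condition~(2) of Definition~\ref{d:avoids} for every
$\gt\notin\Lk(v)$, including $\gt\in L_0\setminus\Lk(v)$, and the
original hypothesis gives you Condition~(2) only for $\gt\notin L_0$.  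A
concrete failure: take $L_0$ a $2$-simplex $\{a,b,c\}$ with
$w_a=a$, $w_b=b$, $w_{abc}=abc$, $p(abc)=c$, the other words on
simplices of $L_0$ trivial, and a vertex $v\notin L_0$ with
$\Lk(v)=\{a\}$; then $\gt=\{b\}\notin\Lk(v)$ but $p(\gt)=b\in\{a,b,c\}$
and $w_{abc}$ is nontrivial, so Condition~(2) for avoiding $\Lk(v)$
fails.  So the inductive hypothesis you invoke does not say what you
need it to say, and this step is unproved.  (The phrase ``controls how
supports behave in $u_l$'' is papering over exactly this.)

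Separately, your induction never engages with the case where every
candidate vertex $v$ is a cone point of $L$ (i.e.\ $\St(v)=L$), in which
case the amalgam is the trivial one and there is no decomposition to
exploit.  The paper structures the whole argument around the join
decomposition $L=\mu\ast K$ with $\mu$ the maximal standard factor,
treats the case $p(\sigma_i)\in K$ by the amalgam, and has separate
arguments for the cases where $p(\sigma_i)$ always lands in $\mu$ (and
for $L$ a simplex).  You would need to add those cases.  Finally, note
that the paper does \emph{not} track the support of individual
syllables $u_l$: it instead identifies $\RA_{L'}$ abstractly as
$\RA_{(L\setminus v)'}\ast_{\RA_{\Lk(v)'}}\RA_{\St(v)'}$ via
Proposition~\ref{p:serre}, applying the inductive hypothesis to the pair
$(\St(v),\Lk(v))$ (for which ``avoiding'' \emph{is} easily verified,
since $\sigma_i$ is the only simplex of $\St(v)\setminus\Lk(v)$ with a
nontrivial word), and then reads off the intersection with $\RA_{L_0}$
from Lemma~\ref{l:amalg}.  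That route avoids the delicate
syllable-by-syllable analysis your write-up requires.
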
 

\begin{proof}

Obviously, $\RA_{L_0'}$ is contained in $\RA_{L'} \cap \RA_{L_0}$. 
The proof of the other containment is similar to the coning trick in Proposition \ref{p:cone}. Make a new flag complex $K$ by coning off $L_0$. On the group level we are adding a new generator $x$ which commutes only with the generators in $L_0$.  Since $\{w_\gs\}$ satisfies Property $PP$ and avoids $L_0$, the collection $\{w_\gs\} \cup \{x\}$ satisfies Property $PP$. Therefore, by Proposition \ref{prop:koberda subgroup} this collection generates the obvious RAAG subgroup $RA_{K'}$ of $RA_{K}$. If $\RA_{L'} \cap \RA_{L_0}$ contained an element $w$ not in $\RA_{L_0'}$, then $w$ would commute with $x$, and this relation does not appear in $RA_{K'}$. 
\end{proof}

Lemma \ref{l:avoids} combined with Proposition \ref{p:serre} implies the following theorem:

\begin{Theorem}\label{thm:generalized PP}
Let $L$ be a flag complex and suppose that $L  = L_1 \cup_{L_0} L_2$ where each $L_i$ is a full subcomplex. Let $\{w_\gs\}_{\gs \in L}$ be words so that each subcollection $\{w_\gs\}_{\gs \in L_i}$ satisfies property $PP$ and avoids $L_0$. Then the subgroup generated by the $\{w_\gs\}_{\gs \in L}$ is the $\RAAG$ based on $L'$. 
\end{Theorem}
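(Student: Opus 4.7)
The plan is to combine the avoids lemma (Lemma~\ref{l:avoids}) with Serre's amalgamation result (Proposition~\ref{p:serre}), after realizing both $\RA_L$ and the alleged target $\RA_{L'}$ as amalgamated products along the decomposition $L = L_1 \cup_{L_0} L_2$.

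First, since $L_0, L_1, L_2$ are full subcomplexes of $L$ with $L_0 = L_1 \cap L_2$, the standard splitting for RAAGs gives
\[
\RA_L \;=\; \RA_{L_1} \ast_{\RA_{L_0}} \RA_{L_2}.
\]
By Property $PP$ applied to each $\{w_\gs\}_{\gs \in L_i}$ together with Proposition~\ref{prop:koberda subgroup}, the subgroup of $\RA_{L_i}$ generated by $\{w_\gs\}_{\gs \in L_i}$ is the RAAG $\RA_{L_i'}$. Since the collection $\{w_\gs\}_{\gs \in L_i}$ avoids $L_0$, Lemma~\ref{l:avoids} gives
\[
\RA_{L_i'} \cap \RA_{L_0} \;=\; \RA_{L_0'}
\]
for $i=1,2$. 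Proposition~\ref{p:serre} then yields an injection
\[
\RA_{L_1'} \ast_{\RA_{L_0'}} \RA_{L_2'} \;\hookrightarrow\; \RA_{L_1} \ast_{\RA_{L_0}} \RA_{L_2} \;=\; \RA_L,
\]
whose image is precisely the subgroup of $\RA_L$ generated by $\{w_\gs\}_{\gs \in L}$.

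It remains to identify this amalgam with $\RA_{L'}$, i.e.\ to show that $L'$ decomposes as $L_1' \cup_{L_0'} L_2'$ with $L_0' = L_1' \cap L_2'$. The vertex set of $L'$ is $\{\gs : w_\gs \neq 1\}$, and since each simplex of $L$ lies entirely in $L_1$ or $L_2$ (because $L_i$ are full and $L = L_1 \cup L_2$), every vertex of $L'$ belongs to some $L_i'$, with those in $L_0$ belonging to both. For the edges and higher simplices, suppose $\gs_1,\dots,\gs_k$ span a simplex of $L'$; then the $w_{\gs_j}$ pairwise commute in $\RA_L$, which by flagness of $L$ means $\bigcup_j \gs_j$ is a single simplex of $L$, hence contained in $L_1$ or in $L_2$. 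Thus every simplex of $L'$ lies in $L_1'$ or $L_2'$, so $L' = L_1' \cup_{L_0'} L_2'$ and consequently $\RA_{L'} = \RA_{L_1'} \ast_{\RA_{L_0'}} \RA_{L_2'}$. Combined with the injection above, this completes the proof. The main (already handled) obstacle is the intersection statement $\RA_{L_i'} \cap \RA_{L_0} = \RA_{L_0'}$, which is exactly what the avoids hypothesis in Definition~\ref{d:avoids} was engineered to guarantee via Lemma~\ref{l:avoids}.
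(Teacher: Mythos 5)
Your proof is correct and follows the same route as the paper's: split $\RA_L$ as the amalgam $\RA_{L_1}\ast_{\RA_{L_0}}\RA_{L_2}$, apply Property PP on each piece to identify $\RA_{L_i'}$, use Lemma~\ref{l:avoids} for the intersection $\RA_{L_i'}\cap\RA_{L_0}=\RA_{L_0'}$, and conclude via Proposition~\ref{p:serre}. The only thing you add beyond the paper's treatment is spelling out why $L'=L_1'\cup_{L_0'}L_2'$ via flagness, which the paper implicitly assumes.
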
 

\begin{proof} 

By Property $PP$, we know that the subgroup $\RA_{L_i'}$ of $\RA_{L_i}$ generated by $\{w_\gs\}_{\gs \in L_i}$ is a $\RAAG$. 
By Lemma~\ref{l:avoids}, we know that $$\RA_{L_1'} \cap \RA_{L_0} = \RA_{L_2'} \cap \RA_{L_0} = \RA_{L_0'}.$$
By Proposition \ref{p:serre}, $\RA_{L'} = \RA_{L_1'} \ast_{\RA_{L_0'}} \RA_{L_2'}$ injects into $\RA_L = \RA_{L_1} \ast_{\RA_{L_0}} \RA_{L_2}$. 
\end{proof}

\begin{remark}
The words $b,d,ac$ in Example \ref{e:badpp} do not satisfy generalized PP for any decomposition of $L$. For example, if we take $L_1$ the full subcomplex containing $a,b,c$ and $L_2$ the full subcomplex containing $b,c,d$, then for the edge $ac$ we have to choose the vertex $c$ in property PP, which is in $L_1 \cap L_2$. Note that the subgroup generated by $ac$ and $b$ contains $cbc^{-1}$, and hence intersects the subgroup $\langle b,c \rangle$ outside of $\langle b \rangle$.

\end{remark}

\section{Generalized Tits Conjecture and small type spherical Artin groups}\label{s:small}
In this section, we describe how the Generalized Tits Conjecture holds for small-type spherical Artin groups of type $A_n$ and $D_n$. The main work is type $D_n$, where we require the generalized Property PP of the previous section.

Let $A$ be any small-type spherical Artin group with the standard generating set $S$, and the Perron-Vannier representation $A\to \Mod(\Sigma)$. Let $\RA$ be the associated $\RAAG$ with the presentation \ref{eq:raag}. Let us record the following observation.
\begin{lemma}
If the composition $\RA\xrightarrow{\Phi_k} A\to \Mod(\Sigma)$ is injective, then $A$ satisfies the Generalized Tits Conjecture for $N = k$.
\end{lemma}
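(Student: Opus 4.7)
The statement is essentially immediate from the definitions, so the ``proof'' is really just an unpacking. My plan is to recall that the Generalized Tits Conjecture for $N = k$ is, by definition, the assertion that the homomorphism $\Phi_k : \RA \to A$ is injective. Thus to verify the conjecture for $A$ with this value of $N$, it suffices to exhibit injectivity of $\Phi_k$ itself.

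The key (and only) observation is the standard fact that if a composition of group homomorphisms $f \circ g$ is injective, then $g$ must be injective: any element $x \in \ker g$ satisfies $f(g(x)) = f(e) = e$, so $x \in \ker(f \circ g) = \{e\}$. Applying this to $g = \Phi_k : \RA \to A$ and $f : A \to \Mod(\Sigma)$ the Perron--Vannier representation, the injectivity of the composition $f \circ \Phi_k$ (which is our hypothesis) forces $\Phi_k$ to be injective, which is precisely the Generalized Tits Conjecture for this $N = k$.

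There is no real obstacle here, since the content of the lemma is only to record that the strategy of the paper for the spherical case, namely proving faithfulness of the induced $\RAAG$-representation in the mapping class group, is a valid approach. The substantive work will appear in later sections, where one must actually show that high powers $\Delta_T^{2N}$ of the centers map to a collection of multitwists in $\Mod(\Sigma)$ generating the correct $\RAAG$; that in turn is where Koberda's theorem and the generalized Property $PP$ from Section~\ref{s:raags} will be needed. But the present lemma is purely a reduction.
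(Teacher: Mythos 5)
Your proof is correct and matches what the paper intends; the paper states this lemma without proof precisely because, as you observe, it is the elementary fact that injectivity of a composition $f\circ g$ forces injectivity of $g$, combined with the definition that the Generalized Tits Conjecture for $N=k$ is exactly the injectivity of $\Phi_k$.
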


For every irreducible subset $T\subseteq S$, let  $A_T\to \Mod(\Sigma_T)$ be its Perron-Vannier representation of the special subgroup $A_T$. If $T=\{s\}$, by $\Sigma_T$ as well as $\partial \Sigma_T$ we mean the single curve in $\Sigma$ corresponding to $s$.
The surface $\Sigma_T$ can be embedded in $\Sigma$ and that embedding induces a homomorphism $\Mod(\Sigma_T)\to \Mod(\Sigma)$ which makes the following diagram commute:
\begin{equation*}
\begin{tikzcd}[row sep=huge]
A_T \arrow[r] \arrow[hookrightarrow, d] &
\Mod(\Sigma_T) \arrow[d] \\
A\arrow[r] &
\Mod(\Sigma)
\end{tikzcd}
\end{equation*}
Note that $\partial \Sigma_T$ is a multicurve in $\Sigma$.
By Remark~\ref{rem:deltas boundaries}, $\Delta^4_T$ is sent to a multitwist around the boundary components of $\partial \Sigma_T$ in $\Mod(\Sigma)$.
We summarize the above discussion in the following lemma.
\begin{lemma}\label{lem:generators multitwists}
Let $A$ be a small type spherical Artin group and let $N\geq 2$. Then each of the generators $\{z_T\}_{T}$ of the $\RAAG$ $\RA$ is mapped to a multitwist in $\Mod(\Sigma)$ via $\rho\circ \Phi_N$.
\end{lemma}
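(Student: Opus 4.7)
The plan is to reduce the statement to Remark \ref{rem:deltas boundaries} via the commutative square displayed just before the lemma. Fix an irreducible spherical subset $T\in\mathcal{S}$; since $\Phi_N(z_T)=\Delta_T^{2N}$, it suffices to show that $\rho(\Delta_T^{2N})\in\Mod(\Sigma)$ is a multitwist. The square tells us that $\rho|_{A_T}=\iota_\ast\circ\rho_T$, where $\rho_T:A_T\to\Mod(\Sigma_T)$ is the Perron--Vannier representation of the special subgroup $A_T$ and $\iota_\ast:\Mod(\Sigma_T)\to\Mod(\Sigma)$ is induced by the inclusion $\Sigma_T\hookrightarrow\Sigma$. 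Since the components of $\partial\Sigma_T$ are pairwise disjoint simple closed curves in $\Sigma$, $\iota_\ast$ carries any multitwist about $\partial\Sigma_T$ to a multitwist in $\Mod(\Sigma)$. So the task reduces to showing that $\rho_T(\Delta_T^{2N})$ is a multitwist about $\partial\Sigma_T$ in $\Mod(\Sigma_T)$.

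Remark \ref{rem:deltas boundaries} applied to $A_T$ gives
$$\rho_T(\Delta_T^4)\;=\;\prod_{i=1}^{r} T_{c_i}^{k_i},$$
where $c_1,\dots,c_r$ enumerate the components of $\partial\Sigma_T$ and $T_{c_i}$ is the Dehn twist about $c_i$. The $c_i$ being disjoint means the $T_{c_i}$ commute, so every integer power of $\rho_T(\Delta_T^4)$ is again a multitwist about $\partial\Sigma_T$. For $N$ even the identity $\Delta_T^{2N}=(\Delta_T^4)^{N/2}$ already gives the conclusion.

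For odd $N\geq 3$ I would write $\rho_T(\Delta_T^{2N})=\rho_T(\Delta_T^4)^{(N-1)/2}\cdot\rho_T(\Delta_T^2)$ and show separately that $\rho_T(\Delta_T^2)$ is a multitwist about $\partial\Sigma_T$. The cleanest route is via centralizers: $\Delta_T^2$ generates the center of the pure subgroup $PA_T$, so $\rho_T(\Delta_T^2)$ commutes with every Dehn twist $\rho_T(x_s)$ for $s\in T$, and therefore preserves (up to isotopy) each Perron--Vannier curve associated to a generator of $A_T$. Those curves fill $\Sigma_T$ — visible in Figures \ref{f:an}, \ref{fig:Dn}, and \ref{fig:E6}, where $\Sigma_T$ is literally constructed as a regular neighborhood of them — so a mapping class preserving every one of them is supported in a collar of $\partial\Sigma_T$, i.e.\ is a multitwist about $\partial\Sigma_T$. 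The product of two commuting boundary multitwists is again a boundary multitwist, completing the argument.

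The only step requiring real work is the odd-$N$ case, since Remark \ref{rem:deltas boundaries} is phrased only in terms of $\Delta^4$; the even-$N$ case is an almost immediate consequence of the commutative diagram and that remark. This is consistent with the authors' restriction to even $N$ in Theorem \ref{t:main}: for even $N$ no input beyond Remark \ref{rem:deltas boundaries} is needed, whereas the odd case requires the supplementary Alexander-method observation above.
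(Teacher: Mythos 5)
Your even-$N$ argument is correct and is essentially the paper's own reasoning (the lemma is stated as a summary of the preceding two paragraphs): restrict $\rho$ to $\iota_*\circ\rho_T$ via the commutative square, invoke Remark~\ref{rem:deltas boundaries} to identify $\rho_T(\Delta_T^4)$ with a multitwist about $\partial\Sigma_T$, and use $\Delta_T^{2N}=(\Delta_T^4)^{N/2}$. You have also correctly noticed that only even $N$ is ever used downstream.

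The odd-$N$ patch, however, has a genuine gap, and the statement it would establish is in fact false. The faulty step is the inference from ``$\rho_T(\Delta_T^2)$ preserves each Perron--Vannier curve up to isotopy'' to ``$\rho_T(\Delta_T^2)$ is supported in a collar of $\partial\Sigma_T$.'' Take $T$ of type $A_k$ with $k\geq 2$ even, so the PV curves form a chain $c_1,\dots,c_k$ filling a subsurface $\Sigma_T$ with a single boundary component $d$. By Lemma~\ref{l:bs} and the Coxeter number $k+1$ of $A_k$, $\rho_T(\Delta_T^2)=(T_{c_1}\cdots T_{c_k})^{k+1}$, while the chain relation gives $(T_{c_1}\cdots T_{c_k})^{2k+2}=T_d$. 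Hence $\rho_T(\Delta_T^2)$ is a square root of $T_d$, and it cannot be a multitwist: if it were, its square would be a multitwist with even exponents, which cannot equal $T_d$. Yet $\rho_T(\Delta_T^2)$ is central, so it does preserve every $c_i$ up to isotopy. What fails in the Alexander-method step is that ``after isotopy preserves $\cup c_i$ setwise'' does not imply ``acts as the identity near $\cup c_i$'': the representative produced is a nontrivial symmetry of the chain (the hyperelliptic-type involution, reversing each $c_i$), not a homeomorphism supported in a boundary collar.

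Consequently $\rho_T(\Delta_T^2)$ genuinely fails to be a multitwist whenever $\partial\Sigma_T$ is connected with $|T|\geq 2$, and $\rho_T(\Delta_T^{2N})$ is a multitwist precisely when $4\mid 2N$, i.e.\ $N$ is even. The lemma's displayed hypothesis ``$N\geq 2$'' is looser than what the cited Remark~\ref{rem:deltas boundaries} supports; it should be read as ``$N\geq 2$ even,'' which is what the surrounding discussion establishes and all that the later arguments require. Your even-$N$ argument is a complete proof under that corrected hypothesis; the odd-$N$ paragraph should simply be deleted.
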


The following theorem of Koberda is our main tool for proving the generalized Tits conjecture for the spherical Artin groups. 
\begin{Theorem}[{\cite[Thm 1.1]{koberda}}]\label{t:koberda}
Let $f_i$ be a finite collection of nontrivial powers of Dehn twists around simple closed curves so that the subgroup $\langle f_i,f_j \rangle$ is not cyclic for all $i,j$. Then there is an $M > 0$ so that the powers $f_i^m$ generate a $\RAAG$ subgroup of $\Mod(\Sigma)$ for all $m \ge M$. 
\end{Theorem}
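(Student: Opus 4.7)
The plan is to form the flag complex $L$ whose vertices are the simple closed curves $c_i$ underlying the Dehn twists $f_i = T_{c_i}^{k_i}$, with an edge between $c_i$ and $c_j$ whenever the two curves are disjoint. By hypothesis $\langle f_i, f_j\rangle$ is never cyclic, so distinct $c_i, c_j$ are either disjoint (and $f_i,f_j$ commute) or have essential geometric intersection; in the latter case, powers of $T_{c_i}$ and $T_{c_j}$ generate a free subgroup after sufficient powers by a classical ping-pong on $\mathcal{PML}(\Sigma)$. Thus there is a natural homomorphism $\phi_M\colon A_L \to \Mod(\Sigma)$ sending the generator $v_i$ to $f_i^M$, and the goal is to show $\phi_M$ is injective once $M$ is sufficiently large.

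To prove injectivity I would run a ping-pong argument on the space of projective measured laminations $\mathcal{PML}(\Sigma)$. Each projective class $[c_i]$ is an attracting fixed point of $T_{c_i}^n$ for large $n$, with basin the complement of the ``repelling'' set $V_i$ consisting of laminations $\lambda$ with $i(\lambda,c_i) = 0$. Choose small open neighborhoods $U_i \ni [c_i]$ and slightly larger neighborhoods $V_i$ containing all laminations disjoint from $c_i$; taking $M$ large enough one can arrange that $f_i^{nM}(\mathcal{PML}\setminus V_i) \subset U_i$ for every nonzero $n$. The crucial compatibility is that if $c_i$ and $c_j$ are disjoint, then $[c_j] \in V_i$ and $f_i^M$ fixes $[c_j]$ together with a neighborhood of it inside $V_i$, so the commuting relation in $A_L$ is reflected in the ping-pong sets.

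Now fix a non-trivial element $w \in A_L$ and put it in a RAAG normal form $w = v_{i_1}^{n_1}\cdots v_{i_k}^{n_k}$ (no adjacent letters commuting can be combined to produce cancellation). I would pick a base lamination $\lambda_0 \notin \bigcup_i V_i$ and track $\phi_M(w)(\lambda_0)$ letter by letter from the right. The rightmost letter sends $\lambda_0$ into $U_{i_k}$. Each subsequent letter $f_{i_{j-1}}^{M n_{j-1}}$ either carries the current $U_{i_j}$ into $U_{i_{j-1}}$ (when $c_{i_{j-1}}$ and $c_{i_j}$ intersect essentially) or essentially preserves it (when they are disjoint); the normal-form hypothesis precludes absorption of a commuting block undoing the progress made. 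This yields $\phi_M(w)(\lambda_0) \in U_{i_j}$ for some $j$, so $\phi_M(w)\neq 1$.

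The main obstacle is the interaction between the RAAG commuting relations and the ping-pong dynamics: unlike in the free group case, the normal form permits long runs of commuting letters that can be reordered, and one must choose the neighborhoods $U_i, V_i$ consistently with the flag structure of $L$ so the dynamics is compatible with every commuting pair simultaneously. An alternative route, closer in spirit to Section~\ref{s:raags}, would induct on the topological complexity of $\Sigma$: cut along a multicurve whose twists commute with a subset of the $f_i$, express the relevant stabilizer inside $\Mod(\Sigma)$ as an amalgamated product, and apply Proposition~\ref{p:serre} after identifying the intersection of the putative RAAG with each vertex group; the difficulty in that approach is pinning down those intersections precisely.
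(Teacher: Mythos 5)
This theorem is not proved in the paper; it is cited as Theorem~\ref{t:koberda} from Koberda's 2012 GAFA paper \cite{koberda} and used as a black box. So there is no in-paper proof to compare against, and your proposal should be judged as a sketch of Koberda's argument.

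Your general strategy --- build a homomorphism $A_L \to \Mod(\Sigma)$ and try to prove injectivity by ping-pong on $\mathcal{PML}(\Sigma)$ --- is in the right spirit, but the argument as written has a genuine gap, which you yourself flag at the end. The step ``or essentially preserves it (when they are disjoint)'' is exactly where naive ping-pong breaks down for RAAGs as opposed to free groups. If $c_{i_{j-1}}$ is disjoint from $c_{i_j}$, then $f_{i_{j-1}}^{Mn}$ fixes the lamination $[c_{i_j}]$ itself but does \emph{not} fix or preserve a neighborhood $U_{i_j}$ of it: laminations near $[c_{i_j}]$ typically intersect $c_{i_{j-1}}$ and get pushed toward $[c_{i_{j-1}}]$, destroying the inductive containment $\phi_M(w)(\lambda_0) \in U_{i_j}$. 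You would need the sets to satisfy a nesting condition compatible with the full commutation structure of $L$ simultaneously (a $U_i$ that is literally invariant under $f_j^{Mn}$ for every $j$ with $c_j$ disjoint from $c_i$, while still being ping-ponged off by every $f_k^{Mn}$ with $c_k$ crossing $c_i$), and arranging such sets is the entire content of the theorem, not a detail. Moreover, the RAAG normal form allows arbitrarily long runs of pairwise-commuting syllables that can be permuted, so ``tracking letter by letter from the right'' is not well defined without first fixing a canonical form and proving the ping-pong estimate is invariant under the allowed shuffles. Koberda's actual proof resolves this with substantially more machinery (careful control of attracting/repelling neighborhoods on $\mathcal{PML}$, a quantitative version of north--south dynamics for twist powers, and an induction that handles commuting blocks uniformly), and is long precisely because of this obstacle.

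Your suggested alternative --- cutting along a multicurve, expressing a stabilizer as an amalgam, and applying Proposition~\ref{p:serre} --- is not Koberda's route either, and you correctly note that identifying the intersections with vertex groups is the missing ingredient. In fact Section~\ref{s:raags} of this paper is devoted to exactly that problem one level up, for RAAGs inside RAAGs (Lemma~\ref{l:avoids} and Theorem~\ref{thm:generalized PP}), and it is already delicate there; doing it directly inside $\Mod(\Sigma)$ without first having Koberda's theorem available would be circular for the present paper's purposes. So as it stands the proposal identifies the correct difficulty but does not close it.
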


The flag complex that is the nerve of the RAAG generated by those powers of Dehn twists has the collection of the curves as its vertex set, where a subcollection of curves spans a simplex if and only if they can be realized as pairwise disjoint curves. The analogous result does not hold for multitwists instead of Dehn twists about single curves. Indeed, it is easy to find surfaces and multicurves so that the RAAG generated by Dehn twists about the individual curves is $F_2 \times F_2$, and the words given by the multicurves are as in Example~\ref{e:badpp}.

Coming back to the surface $\Sigma$ from the Perron-Vannier representation of $A$, let $\mathcal C$ denote the collection of all the curves in the support of the multitwists $\{\rho\circ \Phi_N(z_T)\}_T$. By Theorem~\ref{t:koberda}, high powers of Dehn twists about the curves in $\mathcal C$ generate a $\RAAG$ $\RA_{\mathcal C}$. 
Therefore, we have an induced homomorphism $\RA \rightarrow \RA_{\mathcal C} \subseteq \Mod(\Sigma)$, where each generator of $\RA$ goes to a product of commuting generators of $\RA_{\mathcal C}$. In order to prove that $A$ satisfies the Generalized Tits Conjecture with $N$, it suffices to show that $\RA \rightarrow \RA_{\mathcal C}$ is injective. In the case of $A$ of type $A_n$ and $D_n$, we show it using Property PP and generalized Property PP from Section~\ref{s:raags}, respectively.
To verify Property PP, we pick one of the boundary components $p(T)$ in $\partial \Sigma_T$ for each irreducible $T$ such that the curves $p(T), p(T')$ intersect if and only if $[\Delta_T^2, \Delta_T^2] = 1$ (if and only if $T\subseteq T'$, $T'\subseteq T$, or $[T, T']=1$). For details on how we verify generalized Property PP, see subsection on type $D_n$.

We now rephrase Property PP in terms of curves on surfaces in the context of $\RAAG$ subgroups of mapping class groups.
\begin{lemma}\label{lem:PP for surfaces}
Let $\hat{\mathcal C}$ be a collection of multicurves in a surface $\Sigma$ and let $\mathcal C$ be the union of all the connected component of elements of $\hat{\mathcal C}$. Suppose there exists a function $p:\hat{\mathcal C}\to \mathcal C$ such that 
\begin{itemize}
\item $p(\alpha)\in \alpha$ and
\item $p(\alpha), p(\beta)$ intersect if and only if some connected component of $\alpha$ intersect some connected component of $\beta$.
\item If $\alpha \cup \beta$ is a multicurve, then $p(\alpha)$ is not contained in $\beta$. 
\end{itemize}
Let $\{T_{\alpha}:\alpha\in \hat{\mathcal C}\}$ be a collection of multitwists about multicurves in $\hat{\mathcal C}$.
Then the exists $M>0$ such that $\langle T_{\alpha}^M:\alpha\in \hat{\mathcal C}\ \rangle$ is a RAAG. 
\end{lemma}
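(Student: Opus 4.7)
The plan is to deduce this from Koberda's Theorem~\ref{t:koberda} together with his Property~PP criterion, Proposition~\ref{prop:koberda subgroup}.

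First, I would apply Koberda's theorem to the collection $\mathcal C$ of simple closed curves. It yields an integer $M_0$ such that for every $M$ divisible by $M_0$ the Dehn twists $\{T_c^M\}_{c\in \mathcal C}$ generate a right-angled Artin subgroup $\RA_{\mathcal C}$ of $\Mod(\Sigma)$, whose defining flag complex $L$ has vertex set $\mathcal C$ and an edge between two curves exactly when they are disjoint in $\Sigma$. Since every $\alpha\in\hat{\mathcal C}$ is a multicurve, its components are pairwise disjoint and therefore span a simplex $\sigma_\alpha\subset L$; correspondingly, $T_\alpha^M=\prod_{c\in\alpha} T_c^{n_c M}$ is a word in the generators of $\RA_{\mathcal C}$ supported on $\sigma_\alpha$. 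Replacing $M$ by a further multiple if necessary allows one to arrange that the exponents all have the same sign and are of the form $(T_c^M)^{k}$ as required by Definition~\ref{def:PP}.

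Next, let $L'$ be the flag complex on the vertex set $\hat{\mathcal C}$ in which $\alpha$ and $\beta$ span an edge exactly when $T_\alpha^M$ and $T_\beta^M$ commute in $\RA_{\mathcal C}$, equivalently when every component of $\alpha$ is disjoint from every component of $\beta$. The plan is now to check Property~PP for the collection $\{T_\alpha^M\}$ by using the given function $p$: define $\widetilde p\colon L'\to L$ on vertices by $\alpha\mapsto p(\alpha)$. The first bullet gives $p(\alpha)\in\sigma_\alpha$, as required. The second bullet says that $p(\alpha)$ and $p(\beta)$ are disjoint in $\Sigma$ if and only if $\alpha$ and $\beta$ are adjacent in $L'$; this shows simultaneously that $\widetilde p$ is simplicial and that $\widetilde p(L')$ is a \emph{full} subcomplex of $L$, since any family $p(\alpha_1),\dots,p(\alpha_k)$ of pairwise disjoint curves forces the $\alpha_i$ to pairwise commute and hence to span a simplex of $L'$.

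The remaining issue is injectivity of $\widetilde p$: the hypotheses as stated do not strictly prevent $p(\alpha)=p(\beta)$ for $\alpha\ne\beta$, since a curve does not geometrically intersect itself. In the applications the function $p$ will be constructed so that distinct multicurves have distinct preferred components, so injectivity can be treated either as an implicit part of ``there exists a function $p$'' or verified case by case. Granted injectivity, Property~PP is established and Proposition~\ref{prop:koberda subgroup} immediately yields that $\langle T_\alpha^M : \alpha\in\hat{\mathcal C}\rangle\cong \RA_{L'}$, which is the required right-angled Artin group. The main technical obstacle is therefore settling injectivity of $\widetilde p$ from the stated hypotheses; the remainder is a straightforward dictionary between multicurve disjointness and commutation in $\RA_{\mathcal C}$.
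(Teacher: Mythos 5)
Your proof is essentially identical to the paper's, which simply observes that Koberda's Theorem gives the $\RAAG$ $\RA_{\mathcal C}$ on the individual curves and that the hypothesized function $p$ is exactly a reformulation of Property~PP, so Proposition~\ref{prop:koberda subgroup} applies. Your remark about the injectivity of $\widetilde p$ not being literally forced by the stated hypotheses is a fair observation that the paper glosses over; the authors implicitly assume $p$ distinguishes distinct multicurves (as it does in all their applications), and without that the conclusion should really be stated as ``generates a $\RAAG$ isomorphic to $\RA_{L'}$'' only under that extra hypothesis.
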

\begin{proof}
By Theorem~\ref{t:koberda} the group generated by sufficiently large powers of Dehn twists around $\mathcal C$ is a RAAG. The assumption about existence of function $p$ is just reformulation of Property PP from Definition~\ref{def:PP}. The conclusion follows from Proposition~\ref{prop:koberda subgroup}.
\end{proof}

Similarly, the generalized Property PP could also be rephrased in terms of curves in a surface.

\subsection{Intersections of boundary curves}
Let $A$ be a small-type spherical Artin group, and let $A \rightarrow \Mod(\Sigma)$ be the Perron-Vannier representation.
Suppose that $T$ and $T'$ are two irreducible spherical subsets of $S$, and let $\Sigma_T$ and $\Sigma_{T'}$ be the associated subsurfaces of $\Sigma$. If $\Delta_T^2$ and $\Delta^2_{T'}$ commute, then each of the boundary curves of $\Sigma_T$ and $\Sigma_{T'}$ are necessarily disjoint. If they do not commute, then there are different possibilities for the possible intersections between the curves. We shall record a few lemmas that we will need later.

\begin{lemma}\label{l:curves1}
Let $A$ be of type $A_n$, and suppose that $T$ and $T'$ are irreducible subsets such that $\Delta_T^2$ and $\Delta_{T'}^2$ do not commute. If $\Sigma_T$ has one boundary component up to homotopy (so $|T| = 1$ or $|T|$ is even), then the boundary curve of $\Sigma_T$ has nontrivial intersection number with every boundary curve of $A_{T'}$.
\end{lemma}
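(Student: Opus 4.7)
The plan is to reduce the intersection claim to an explicit topological analysis of the subsurfaces $\Sigma_T$ and $\Sigma_{T'}$ inside the Perron-Vannier surface $\Sigma$ for type $A_n$.

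First, I translate the algebraic hypothesis into combinatorial data. Since $[\Delta_T^2, \Delta_{T'}^2] \neq 1$, the defining relations of $\RA$ force $T \not\subseteq T'$, $T' \not\subseteq T$, and $[T, T'] \neq 1$. In type $A_n$, the irreducible subsets of $S$ are intervals in the chain $\{t_1, \ldots, t_n\}$, so $T$ and $T'$ are intervals with $T \cup T'$ again an interval and neither containing the other. Thus $T$ and $T'$ either overlap (with nonempty intersection but neither containing the other) or are adjacent (disjoint, but with closest indices differing by one).

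Second, I would describe $\gamma := \partial \Sigma_T$ concretely using the Perron-Vannier picture of Figure~\ref{f:an}. When $|T|=1$ with $T=\{t_i\}$, the curve $\gamma$ is isotopic to $t_i$ itself. When $|T|$ is even, $\gamma$ is the unique boundary component of $\Sigma_T$: a separating simple closed curve in $\Sigma$ bounding a genus $|T|/2$ subsurface. By direct inspection, a curve $t_k$ with $k\notin T$ crosses $\gamma$ transversely in exactly two points precisely when $k$ is adjacent in the chain to some element of $T$ (i.e.\ $k=\min T-1$ or $k=\max T+1$); otherwise $t_k$ is disjoint from $\gamma$.

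Third, I argue by contradiction. Suppose some boundary component $\delta$ of $\Sigma_{T'}$ is isotopic to a curve disjoint from $\gamma$; then $\delta$ lies entirely on one side of $\gamma$. From the Perron-Vannier picture for $\Sigma_{T'}$, each boundary component of $\Sigma_{T'}$ is realized as a curve encircling one end of the chain $T'$, so $\delta$ passes through a neighborhood of $t_{\min T'}$ or $t_{\max T'}$. In both the overlapping and adjacent cases, the combinatorial setup guarantees that at least one end of $T'$ has its endpoint curve not in $T$ but adjacent in the chain to $T$; by the second step, this endpoint curve crosses $\gamma$. Since $\delta$ encircles a neighborhood of this endpoint curve, $\delta$ must also cross $\gamma$, contradicting the assumption. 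One then verifies that the same argument works for every boundary component of $\Sigma_{T'}$.

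The main obstacle will be rigorously identifying which end of $T'$ each boundary component of $\Sigma_{T'}$ encircles, especially in the adjacent disjoint case $T \cap T' = \emptyset$, where the subsurfaces $\Sigma_T$ and $\Sigma_{T'}$ share only a small disk around the lone intersection point of their innermost curves. This bookkeeping is cleanest by induction on $|T'|$: removing an endpoint from $T'$ modifies the corresponding boundary curve of $\Sigma_{T'}$ in a predictable way via a handle slide, allowing explicit tracking of which boundary curve of $\Sigma_{T'}$ encircles which end, and in particular confirming that both boundary components (when $|T'|$ is odd with $|T'|\ge 3$) are nontrivially linked with $\gamma$.
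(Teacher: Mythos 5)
Your proposal takes a genuinely different route from the paper: you attempt a direct combinatorial and topological analysis of the subsurfaces, whereas the paper argues via the hyperelliptic involution combined with faithfulness of the Perron--Vannier representation. The paper's argument is short: $\partial\Sigma_T$ is fixed up to isotopy by the hyperelliptic involution (being a single curve), while the two components of $\partial\Sigma_{T'}$ are swapped by it, so it suffices to get intersection with one component; and that follows because the Perron--Vannier representation is injective for type $A_n$ and $[\Delta_T^m,\Delta_{T'}^n]\neq 1$, so the corresponding multitwists cannot commute in $\Mod(\Sigma)$, hence the multicurves cannot be made disjoint. Your route entirely misses both of these ingredients.

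As written, your proof has several genuine gaps. First, the claim that ``at least one end of $T'$ has its endpoint curve not in $T$ but adjacent in the chain to $T$'' is false: take $T=\{1,2\}$ and $T'=\{2,3,4\}$, where $\Delta_T^2$ and $\Delta_{T'}^2$ do not commute, yet $\min T'=2\in T$ and $\max T'=4$ is distance two from $T$. The relevant curve adjacent to $T$ is $t_3$, an \emph{interior} curve of the chain $T'$, not an endpoint, and your appeal to boundary components ``encircling one end of the chain'' breaks down. Second, and more seriously, even if some $t_k\subset\Sigma_{T'}$ crosses $\gamma=\partial\Sigma_T$, it does not follow that $\partial\Sigma_{T'}$ crosses $\gamma$: one must rule out that $\gamma$ lies entirely in the interior of $\Sigma_{T'}$ (equivalently that $\Sigma_T$ is isotopic into $\Sigma_{T'}$), and you never address this. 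Third, you conflate a transverse crossing in a specific realization with nonvanishing geometric intersection number: showing $\delta$ and $\gamma$ meet in one configuration does not prove they cannot be isotoped apart, which is what the lemma requires. Fourth, the assertion ``$\delta$ lies entirely on one side of $\gamma$'' already fails when $|T|=1$, since $t_i$ is nonseparating in $\Sigma$. Finally, the concluding sentence (``the same argument works for every boundary component'') and the suggested induction on $|T'|$ are left entirely undeveloped, and it is precisely this bookkeeping that the hyperelliptic involution argument renders unnecessary. The plan is not unsalvageable, but as it stands it would need substantial additional work, whereas the paper's symmetry-plus-faithfulness argument sidesteps all of it.
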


\begin{proof}

Note that $\partial \Sigma_T$ is fixed up to homotopy by the hyperelliptic involution. If $A_{T'}$ has two boundary components, then these are permuted up to homotopy by the hyperelliptic involution. Therefore, we only have to show that $\partial \Sigma_T$ has nontrivial intersection number with a single boundary component of $\partial \Sigma_{T'}$. Since the Perron-Vannier representation is injective in this case \cite{pv}, this follows from the fact that $[\Delta_T^m, \Delta_{T'}^n] \ne 1$ in $A_n$.

\end{proof}

\begin{lemma}\label{l:curves2}
Suppose $\Sigma_{T'}$ and $\Sigma_T$ both have two boundary components, i.e.\ both $|T|$ and $|T'|$ are odd and $\neq 1$. Then
\begin{itemize}

\item Each boundary component of $\Sigma_T$ intersects at least one boundary component of $\Sigma_T'$, and vice versa.

\item If $|T - T'|$ is odd, then each boundary component of $\Sigma_T$ intersects each boundary component of $\Sigma_{T'}$.
\end{itemize}

\end{lemma}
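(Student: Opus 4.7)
The plan is to use the Birman--Hilden presentation of $\Sigma$ as the double cover $\pi \colon \Sigma \to D$ of an $(n+1)$-punctured disk $D$ branched over the punctures $p_1, \dots, p_{n+1}$, with the hyperelliptic involution $\iota$ as deck transformation; the curve $c_i$ is the lift of a standard arc from $p_i$ to $p_{i+1}$. Under this identification, for an irreducible $T = \{s_a, \dots, s_b\}$ the subsurface $\Sigma_T$ is $\pi^{-1}(D_T)$, where $D_T \subset D$ is a subdisk enclosing exactly the consecutive punctures $p_a, \dots, p_{b+1}$, so $\partial \Sigma_T = \pi^{-1}(\gamma_T)$ for $\gamma_T = \partial D_T$. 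Since $|T|$ is odd, $D_T$ contains the even number $|T|+1$ of branch points, so $\gamma_T$ has trivial $\mathbb{Z}/2$-monodromy and lifts to two disjoint simple closed curves $\gamma_T^+, \gamma_T^-$ in $\Sigma$ that are swapped by $\iota$; the same description applies to $\gamma_{T'}$.

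First I would reduce the intersection question to the disk. Since the centers $\Delta_T^2, \Delta_{T'}^2$ do not commute in $\RA$, the enclosed puncture sets are neither nested nor disjoint; the Jordan curve theorem then forces $\gamma_T, \gamma_{T'}$ to cross in $D$, and after isotopy they meet transversely at exactly two points $q_1, q_2$. Let $\alpha'$ be the arc of $\gamma_T$ inside $D_{T'}$ and $\beta'$ the arc of $\gamma_{T'}$ inside $D_T$; the loop $\alpha' \cdot (\beta')^{-1}$ based at $q_1$ bounds the region $D_T \cap D_{T'}$, which contains exactly $|T \cap T'|+1$ branch points. Its $\mathbb{Z}/2$-monodromy is thus trivial precisely when $|T \cap T'|+1$ is even, equivalently when $|T \cap T'|$ is odd, which (since $|T|$ is odd) is equivalent to $|T-T'|$ being even.

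The main computation is then to track the lifts of $\gamma_T, \gamma_{T'}$ through $q_1, q_2$. Fix a preimage $q_1^+$ of $q_1$ and let $\gamma_T^+, \gamma_{T'}^+$ denote the lifts through $q_1^+$; each passes through exactly one preimage of $q_2$, and the monodromy statement above determines whether these preimages coincide. If $|T-T'|$ is even, the lifts of $\alpha'$ and $\beta'$ from $q_1^+$ terminate at the same preimage $q_2^\epsilon$, giving $\gamma_T^+ \cap \gamma_{T'}^+ \supseteq \{q_1^+, q_2^\epsilon\}$ and $\gamma_T^+ \cap \gamma_{T'}^- = \emptyset$; if $|T-T'|$ is odd they terminate at opposite preimages, giving $\gamma_T^+ \cap \gamma_{T'}^+ = \{q_1^+\}$ and $\gamma_T^+ \cap \gamma_{T'}^- = \{q_2^\epsilon\}$. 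Applying $\iota$ (which swaps $\gamma_T^\pm$ and $\gamma_{T'}^\pm$) produces the analogous intersections for the remaining two pairs. In either parity case, the two ``diagonal'' pairs $(\gamma_T^\pm, \gamma_{T'}^\pm)$ have nonempty intersection, so each boundary of $\Sigma_T$ meets at least one boundary of $\Sigma_{T'}$ and conversely, establishing the first bullet; when $|T-T'|$ is odd the analysis also produces an ``anti-diagonal'' intersection, and $\iota$-symmetry then forces all four pairs to be nonempty, establishing the second bullet. The main technical obstacle is the parity/monodromy bookkeeping in the double cover; once the count of branch points inside $D_T \cap D_{T'}$ is pinned down, the rest is formal.
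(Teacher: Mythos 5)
Your proof is correct, and it takes a genuinely different route from the paper's. The paper handles the two bullets by separate arguments: for the first, it invokes faithfulness of the Perron--Vannier representation (so the multitwists about $\partial\Sigma_T$ and $\partial\Sigma_{T'}$ cannot commute, forcing at least one intersecting pair) and then applies $\iota$-equivariance to get the symmetric conclusion; for the second, it defers to the alternative proof of Proposition~\ref{prop:conjecture for An}, where it argues by cutting $\Sigma_T$ along the curves $t_1, t_3, \dots$ and noting that each component of $\partial\Sigma_{T'}$ crosses exactly one of these cutting curves and therefore must meet both components of $\partial\Sigma_T$. You instead work entirely in the Birman--Hilden branched double cover of the punctured disk and track the $\mathbb{Z}/2$-monodromy of the bounding loop of $D_T \cap D_{T'}$, which determines whether the lifts of the two intersection points $q_1,q_2$ pair up as $2{+}0$ or $1{+}1$ across the four boundary curves. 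This handles both bullets by a single parity computation, avoids any appeal to faithfulness of the representation, and in fact proves more than the lemma asserts: it pins down the exact intersection pattern (diagonal pairs meet twice and anti-diagonal pairs are disjoint when $|T\setminus T'|$ is even; all four pairs meet exactly once when $|T\setminus T'|$ is odd), which is useful downstream when verifying Property~PP. One small point worth making explicit if you write this up: since the statement concerns the literal boundary curves of the standard Perron--Vannier subsurfaces rather than isotopy classes, your constructed intersection points already suffice; but if one wanted the stronger claim of positive geometric intersection number, one would note that in the one-point cases it is automatic, and in the two-point case the region of $\Sigma$ covering $D_T\cap D_{T'}$ contains branch points and so is not a bigon.
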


\begin{proof}
For the first statement, again by injectivity of the Perron-Vannier representation we must have that at least one component of $\partial \Sigma_T$ intersects a component of $\partial \Sigma_{T'}$. Since both components are permuted by the hyperelliptic involution, we get the other intersection. 

We will prove the second statement in the Alternative proof of Proposition \ref{prop:conjecture for An} below, once we have developed more notation.

\end{proof}

There is a similar lemma for the images of curves in the Perron-Vannier representation $A\to \Mod(\Sigma)$ for Artin groups $A$ of type $D_n$. Note that $\Sigma$ has a unique boundary curve which is contained in the boundary of $\Sigma_T$ for any irreducible subset $T\subseteq S$ of type $D_m$ for $m < n$. We will call this the \emph{central component}, and the other boundary components \emph{non-central components}. We shall see that the Dehn twist around the central component never factors into any of our calculations. 

\begin{lemma}\label{l:curves2}
Let $A$ be of type $D_n$, and suppose that $T$ and $T'$ are irreducible subsets such that $\Delta_T^2$ and $\Delta_{T'}^2$ do not commute.

\begin{enumerate}
\item If $A_T$ has type $D_m$ and $m = 2k+1$ for $k > 0$, then the non-central boundary component of $\partial \Sigma_T$ intersects each curve in $\partial \Sigma_{T'}$ if $s, s' \notin T'$, and intersects one boundary curve of $\Sigma_{T'}$ if one of $s$ or $s' \in T'$. 

\item If $A_T$ has type $D_m$ and $m = 2k$ for $k > 1$, then if $s,s' \notin T'$  each non-central boundary component either intersects all curves in $\partial\Sigma_{T'}$ or there are two pairs of components that intersect. If $s$ or $s' \in T'$, there is one non-central component of $\partial \Sigma_T$ which has nontrivial intersection with $\partial \Sigma_{T'}$. Furthermore, one component  intersects all $\partial \Sigma_{T'}$ for $T'$ that contain $s$, and one component intersects all $\partial \Sigma_{T'}$ for $T'$ that contain $s'$.

\item If $s \in T$ and $s' \in T'$, then the components of $\partial\Sigma_T$ and $\partial \Sigma_{T'}$ that are not contained in $\partial \Sigma_{D_m}$ for any $m$ always intersect. 

\end{enumerate}

\end{lemma}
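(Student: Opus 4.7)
The plan is to analyze the geometry directly using the explicit Perron--Vannier curves depicted in Figure~\ref{fig:Dn}. For $T$ of type $D_m$, the subsurface $\Sigma_T \subset \Sigma$ is a regular neighborhood of the union of the Perron--Vannier curves indexed by $\{s, s', t_1, \ldots, t_{m-2}\}$. Its boundary consists of the outer component (shared with $\partial \Sigma$) together with one inner component when $m$ is odd, or two inner components when $m$ is even; these inner components separate $\Sigma_T$ from the rest of $\Sigma$ and must pass through the ``neck'' lying between the handles indexed by $t_{m-2}$ and those indexed by $t_{m-1}, t_m, \ldots$.

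First I would enumerate the possibilities for $T'$. The non-commutation of $\Delta_T^2$ and $\Delta_{T'}^2$ forces $T' \not\subseteq T$, $T \not\subseteq T'$, and $[T,T'] \neq 1$. Given the shape of the $D_n$ Coxeter diagram, $T'$ must be one of: (i) an $A$-type subset $\{t_i, \ldots, t_j\}$ with $s, s' \notin T'$ that straddles the neck, i.e.\ $i \leq m-1$ and $j \geq m-1$; (ii) an $A$-type subset containing exactly one of $s$ or $s'$ together with an initial segment $t_1, \ldots, t_r$ with $r \geq m-1$; or (iii) a $D$-type subset properly containing $T$. These correspond exactly to the sub-cases appearing in the statement.

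For part~(1), when $m = 2k+1$, the unique inner boundary $\alpha$ of $\Sigma_T$ runs along the right side of $\Sigma_T$ and crosses the neck exactly once. In sub-case~(i) each component of $\partial \Sigma_{T'}$ must also cross the neck, since $\partial \Sigma_{T'}$ separates $\Sigma_{T'}$ from $\Sigma$, so each component meets $\alpha$ essentially. In sub-case~(ii) the subsurface $\Sigma_{T'}$ lives in just one arm of the fork; only its boundary component that threads through the neck meets $\alpha$, while the other is trapped in the arm. Part~(2) proceeds similarly with two inner boundaries $\alpha_1, \alpha_2$ of $\Sigma_T$: for $s, s' \notin T'$, a component of $\partial \Sigma_{T'}$ crossing the neck either meets both $\alpha_i$ or is paired with exactly one of them, depending on the relative embedding of $\Sigma_T$ and $\Sigma_{T'}$ near the neck, which is the stated dichotomy; for $s$ or $s' \in T'$, direct inspection of the figure identifies the specific inner boundary of $\Sigma_T$ meeting $\partial \Sigma_{T'}$ as the one running near the corresponding arm. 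Part~(3) follows because, when $s \in T$ and $s' \in T'$, the boundary components of $\Sigma_T$ and $\Sigma_{T'}$ that are not inherited from a smaller $\Sigma_{D_{m''}}$ sit on opposite arms of the fork and must cross transversely.

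The main obstacle is the combinatorial bookkeeping: identifying which inner boundary of $\Sigma_T$ sits near which arm of the fork, and certifying that each claimed intersection is essential rather than accidental (since Dehn twists about homotopically non-disjoint but not essentially intersecting curves could still commute). I would handle this by drawing canonical representatives of the inner boundaries near the fork for small values of $m$ (e.g.\ $m = 4, 5, 6, 7$) to fix intuition, then induct on $n - m$ to reduce to the case where $T'$ lives entirely adjacent to the neck, using that attaching a new handle to the right of $\Sigma_T$ does not disturb the intersection pattern in the neck.
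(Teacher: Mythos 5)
Your approach diverges substantially from the paper's, and I think it has a genuine gap at the crucial step. The paper's proof for parts (1) and (2) does not attempt to verify essential intersections by directly tracing neck-crossing behavior. Instead it combines two soft facts: (a) injectivity of the Perron--Vannier representation together with $[\Delta_T^2,\Delta_{T'}^2]\neq 1$ forces \emph{at least one} pair of boundary components of $\Sigma_T$ and $\Sigma_{T'}$ to have nontrivial geometric intersection number; and (b) the hyperelliptic involution fixes the inner component(s) of $\partial\Sigma_T$ while permuting the two components of $\partial\Sigma_{T'}$ when $s,s'\notin T'$, so applying the involution to one guaranteed intersection produces the others. When $s$ or $s'\in T'$, one component of $\partial\Sigma_{T'}$ is contained in $\partial\Sigma_{D_{m'}}$ for larger $m'$ and hence is disjoint from the inner component(s) of $\partial\Sigma_T$, forcing the intersection onto the other component. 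Part (3) is settled by a pair-of-pants isotopy argument using $\gamma_s$, $\gamma_{s'}$, and the outer boundary.

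You identify the right difficulty yourself: ``certifying that each claimed intersection is essential rather than accidental.'' But ``both curves cross the neck'' does not by itself show positive geometric intersection number — two curves can both pass through a neck and be isotopic off each other (e.g.\ parallel copies). Your proposed remedy (drawing small cases and inducting on $n-m$) does not address this; the inductive step ``attaching a new handle to the right of $\Sigma_T$ does not disturb the intersection pattern in the neck'' is exactly the kind of local-to-global isotopy claim that needs justification. Without some input like injectivity of the Perron--Vannier representation (to get a first nontrivial intersection for free) and the involution symmetry (to propagate it), the direct neck-crossing picture is not enough. Similarly for part (3), ``sit on opposite arms of the fork and must cross transversely'' is a picture, not a proof — the paper's pair-of-pants argument is the rigorous version of this intuition. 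I would recommend replacing the neck-crossing analysis with the minimal-position facts the paper invokes, or else spelling out a genuine minimal-intersection argument (say via bigon criterion with explicitly drawn representatives) for each case.
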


\begin{proof}
The proof of the first two items is similar to Lemma \ref{l:curves1}. 
The non-central component(s) of $\partial\Sigma$ is fixed by the hyperelliptic involution.
 If $\partial \Sigma_{T'}$ has two components then if $s,s' \notin T'$ then these are permuted by the hyperelliptic involution. 
If $s$ or $s' \in T'$, then one of the curves of $\partial \Sigma_{T'}$ is contained in $\partial \Sigma_{D_m}$ for $m > n$, and hence misses the non-central component(s) of $\partial \Sigma_{D_n}$. The other curve therefore intersects $\partial \Sigma_{D_n}$
For the last statement of the second item, note that each of the boundary components of $\partial \Sigma_{D_n}$ is a boundary component of $\partial \Sigma_{A_n}$ and $\partial \Sigma_{A_n'}$. Therefore, they are disjoint from $\partial\Sigma_{A_m}$ and $\partial\Sigma_{A_m'}$ respectively for $m > n$. 

For the last item, note that both these curves have nontrivial intersection number with $\gamma_s$ and $\gamma_s'$ respectively (where $\gamma_s$ and $\gamma_s'$ are the curves corresponding to the generators $s$ and $s'$). The curves $\gamma_s$ and $\gamma_{s'}$ and the outer boundary component are the boundary of a pair of pants. If our curves had trivial intersection number, then we could homotope one of them to have trivial intersection number with both $\gamma_s$ and $\gamma_s'$.
\end{proof}

\subsection{Type $A_n$}

Suppose that $A$ is a spherical Artin group of type $A_n$, i.e.,\ $A$ is the braid group on $n+1$-strands. Then $A$ is the mapping class group of the $n+1$-punctured disc. We give the punctures an arbitrary labeling $\{1,2, \dots, n+1\}$.  The squares of the standard generators of $A$ correspond to the Dehn twists about the simple closed curves around two consecutive punctures $\{i, i+1\}$. 
Any irreducible subset $T$ of $S$ corresponds to a subset $I\subseteq\{1, \dots, n+1\}$ of consecutive numbers, where $|I| = |T|+1$. The center of the pure braid group $PB_T$ corresponds to a Dehn twist $T_I$
around punctures in $I$, see Figure \ref{f:braids}. Therefore, the Generalized Tits Conjecture for $N = 1$ asks if this collection of Dehn twists about these simple closed curves generates a $\RAAG$ subgroup of the braid group. For simplicity, we denote the generators $z_T$ of $\RA$ by $z_I$. 

\begin{example}\label{ex:braids}
The braid groups on $\ge 4$ strands do not satisfy the Generalized Tits Conjecture with $N=1$.
The map $\Phi_1$ is not injective, as there are deeper relations between the $\Delta_T^2$. See Figure \ref{f:braids}.
By the lantern relation (see e.g.\ \cite[Prop 5.1]{MCGbook}), the Dehn twist $T_r$ about the red curve in Figure~\ref{f:braids} satisfies 
$$T_r=T_{123}^{-1}T_{12}T_{23}.$$
Similarly, the Dehn twist $T_b$ about the blue curve satisfies  
$$T_b=T_{234}^{-1}T_{23}T_{34}.$$
However, the elements $z_{123}^{-1}z_{12}z_{23}$ and $z_{234}^{-1}z_{23}z_{34}$ do not commute in $\RA$. Indeed, it suffices to consider their images under the retraction $\RA\to \langle z_{123}, z_{234}\rangle\simeq F_2$.

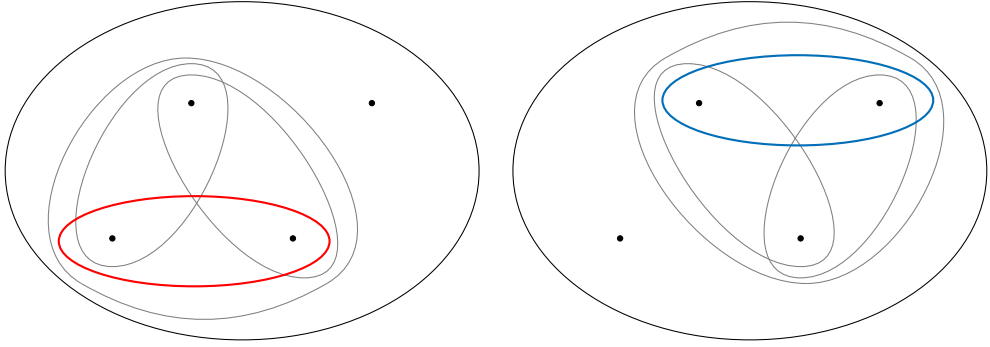
\begin{figure}
\centering
\begin{tikzpicture}[scale = .75]
\begin{scope}[xshift=-4.5cm]
\draw (0,0) ellipse (4.2cm and 3 cm);
\node[circle, draw, fill, inner sep = 0pt,minimum width = 2pt] (a) at (-2.3,-1.2) {};
\node[circle, draw, fill, inner sep = 0pt,minimum width = 2pt] (d) at (-.9,1.2) {};
\node[circle, draw, fill, inner sep = 0pt,minimum width = 2pt] (e) at (2.3,1.2) {};
\node[circle, draw, fill, inner sep = 0pt,minimum width = 2pt] (h) at (.9,-1.2) {};


\draw[gray] (-2.3, -1.7) to[out =180, in = 180] (-.9, 1.9) to[out=0, in=0] (-2.3, -1.7);
\draw[gray] (1.1,-1.9) to[out =180, in = 180] (-.9, 1.7) to[out=0, in=0] (1.1,-1.9);

\draw[red, thick ] (-.85,-1.25) ellipse (2.4cm and .8cm);

\draw[gray] (-2.85, -2) to[out=150, in=180] (-1, 2);
\draw[gray] (-1, 2) to[out=0, in=30] (1.5, -2);
\draw[gray] (-2.85, -2) to[out=-30, in=-150] (1.5, -2);

\end{scope}
\begin{scope}[xshift=4.5cm]

\draw (0,0) ellipse (4.2cm and 3 cm);
\node[circle, draw, fill, inner sep = 0pt,minimum width = 2pt] (a) at (-2.3,-1.2) {};
\node[circle, draw, fill, inner sep = 0pt,minimum width = 2pt] (d) at (-.9,1.2) {};
\node[circle, draw, fill, inner sep = 0pt,minimum width = 2pt] (e) at (2.3,1.2) {};
\node[circle, draw, fill, inner sep = 0pt,minimum width = 2pt] (h) at (.9,-1.2) {};

\draw[gray] (2.3, 1.7) to[out =180, in = 180] (.9, -1.9) to[out=0, in=0] (2.3, 1.7);
\draw[gray] (-1.1,1.9) to[out =180, in = 180] (.9, -1.7) to[out=0, in=0] (-1.1,1.9);

\draw[NavyBlue, thick] (.85,1.25) ellipse (2.4cm and .8cm);

\draw[gray] (2.85,2) to[out=-30, in=0] (1, -2);
\draw[gray] (1, -2) to[out=180, in=-150] (-1.5, 2);
\draw[gray] (2.85, 2) to[out=150, in=30] (-1.5, 2);
\end{scope}
\end{tikzpicture}
\caption{Dehn twists around the grey curves in this diagram are images of $\Delta_T^2$ for certain braid subgroups of the braid group on $8$ strands. The red and blue curves arise via the lantern relation, and obviously commute in the braid group. It is easy to check that the words in the lantern relation which produce these curves do not commute in the $\RAAG$ on the $\Delta_T^2$, and hence the induced homomorphism from this $\RAAG$ to the braid group is not injective.}
\label{f:braids}

\end{figure}

\end{example}

As we have noted, Theorem \ref{t:koberda} implies that high powers of these elements generate a $\RAAG$, i.e.\ we get the following
\begin{Prop}\label{prop:conjecture for An}
The Generalized Tits Conjecture holds for all spherical Artin groups of type $A_n$ with $N$ sufficiently large.
\end{Prop}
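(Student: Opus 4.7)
The plan is to reduce the statement to the machinery developed in Section 5: combine the Perron-Vannier representation $\rho: A \to \Mod(\Sigma)$ with Koberda's theorem (Theorem \ref{t:koberda}) and the surface reformulation of Property PP (Lemma \ref{lem:PP for surfaces}). By Lemma \ref{lem:generators multitwists}, for $N$ sufficiently large (and even), the composition $\rho \circ \Phi_N$ sends each generator $z_T$ of $\RA$ to a multitwist about $\partial \Sigma_T$. Let $\mathcal{C}$ be the collection of all connected components arising as boundary curves of some $\Sigma_T$. Theorem \ref{t:koberda} produces an integer $M$ so that large powers of the Dehn twists about curves in $\mathcal{C}$ generate a RAAG $\RA_{\mathcal{C}} \leq \Mod(\Sigma)$, with edges given by disjointness. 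Choosing $N$ a sufficiently large multiple of $M$, the image of $\rho \circ \Phi_N$ lies inside $\RA_{\mathcal{C}}$, and the problem reduces to showing the induced map $\RA \to \RA_{\mathcal{C}}$ is injective.

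To apply Lemma \ref{lem:PP for surfaces}, I would construct a function $p$ assigning to each irreducible spherical subset $T$ a single boundary component $p(T) \in \partial \Sigma_T$ such that $p(T)$ and $p(T')$ intersect in $\Sigma$ if and only if some component of $\partial \Sigma_T$ intersects some component of $\partial \Sigma_{T'}$ --- equivalently, if and only if $[z_T, z_{T'}] \neq 1$ in $\RA$. When $|T|=1$ or $|T|$ is even, the subsurface $\Sigma_T$ has a single boundary component and $p(T)$ is forced. When $|T|$ is odd and $\geq 3$, $\Sigma_T$ has two boundary components; I would choose, canonically, the one associated to the lower-indexed end of the corresponding interval $I$ of strands.

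The bulk of the argument consists in verifying the intersection condition for $p$. One direction is immediate: whenever $z_T, z_{T'}$ commute in $\RA$ (because $T \subseteq T'$, $T'\subseteq T$, or $[T,T']=1$), every component of $\partial \Sigma_T$ is disjoint from every component of $\partial \Sigma_{T'}$, so in particular $p(T) \cap p(T') = \emptyset$. In the non-commuting direction, Lemma \ref{l:curves1} handles the case when at least one of $\Sigma_T, \Sigma_{T'}$ has a single boundary component, since then the forced $p$-value intersects \emph{every} boundary component of the other. The remaining case is when both $|T|, |T'|$ are odd and $\geq 3$; here Lemma \ref{l:curves2} applies. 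When $|T \triangle T'|$ is odd, every pair of boundary components intersects, so any choice of $p$ works. When $|T \triangle T'|$ is even, a closer analysis of the Perron-Vannier surface shows that the two boundary components on each side are permuted by the hyperelliptic involution in a compatible way, and the "lower-indexed end" components meet precisely when the intervals $I, I'$ genuinely overlap or are adjacent.

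The main obstacle is this final verification: pinning down exactly which boundary components of $\Sigma_T$ and $\Sigma_{T'}$ meet when both subsurfaces have two boundary components and $|T\triangle T'|$ is even. Once Property PP is confirmed via the canonical choice of $p$, Lemma \ref{lem:PP for surfaces} yields that $\langle \rho\circ\Phi_N(z_T) \rangle_T$ is isomorphic to the RAAG on the expected nerve, which by construction is exactly $\RA$. Hence $\Phi_N$ is injective for $N$ sufficiently large, and $A$ satisfies the Generalized Tits Conjecture.
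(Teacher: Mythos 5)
Your high-level strategy --- pass through Koberda's Theorem~\ref{t:koberda} and the surface version of Property~PP (Lemma~\ref{lem:PP for surfaces}) via the Perron--Vannier representation --- does match the paper's ``alternative proof.'' However, you miss the paper's primary (essentially one-line) proof: in the $(n+1)$-punctured disk model of the braid group, each $\Delta_T^2$ is a \emph{single} Dehn twist about a simple closed curve $T_I$ encircling the punctures in the interval $I$, so Koberda's theorem applies directly with no Property~PP argument needed, and the disjointness pattern of the curves $T_I$ matches the defining relations of $\RA$ on the nose. The Perron--Vannier argument is there only as a warm-up for type $D_n$, where that shortcut is unavailable.

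If you do take the Perron--Vannier route, the proposal has a genuine gap at its crux. The choice function you describe --- ``the boundary component associated to the lower-indexed end of the interval'' --- is not well-defined: when $|T|$ is odd, both components of $\partial\Sigma_T$ run the full length of the chain of handles, so neither is localized at a particular end. The correct choice (the paper's) is parity-dependent. One fixes a component $\Sigma_+$ of $\Sigma$ cut along the odd-indexed curves and a component $\Sigma_\#$ of $\Sigma$ cut along the even-indexed curves, and sets $p(T)$ to be the component of $\partial\Sigma_T$ lying in $\Sigma_+$ when the starting index $i$ of $T$ is odd, and in $\Sigma_\#$ when $i$ is even. This parity dependence is forced: a boundary curve of $\Sigma_T$ is disjoint from the odd-indexed curves exactly when $i$ is odd and from the even-indexed ones exactly when $i$ is even, so ``which half it lies in'' only makes sense with respect to the matching cut. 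You also have a parity error in the case split: Lemma~\ref{l:curves2} conditions on $|T - T'|$, not $|T\triangle T'|$; the latter equals $|T|+|T'|-2|T\cap T'|$ and is therefore \emph{always} even when $|T|$ and $|T'|$ are both odd, so your ``odd'' case is vacuous. Finally, the intersection verification you call ``the main obstacle'' and then defer with ``a closer analysis shows'' is exactly the content of the proof: when $i,i'$ have the same parity, $p(T)$ and $p(T')$ lie in the same half while the complementary boundary components lie in the other half, so the first bullet of Lemma~\ref{l:curves2} forces $p(T)\cap p(T')\neq\emptyset$; when the parities differ (equivalently, $|T-T'|$ is odd for overlapping intervals), a direct count of crossings with the separating curve $t_{k-1}$ shows \emph{every} pair of boundary components intersects.
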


We shall also show how the conjecture follows from using the Perron-Vannier representation and property PP (this will serve as a warmup for the other cases). 

\begin{proof}[Alternative proof of Proposition~\ref{prop:conjecture for An}]
By Lemma~\ref{lem:PP for surfaces}, it suffices to show that the multicurves that arise in the Perron-Vannier representation satisfy the condition in the statement of the lemma. Let $S = \{t_1,\dots t_n\}$ be the standard set of generators. 

Note that if $n$ is odd, then the $\Sigma - \{t_1, t_3, \dots, t_{2k+1},\dots t_n\}$ has two connected components. Similarly, if $n$ is even, then $\Sigma - \{t_1, t_3, \dots t_{2k+1}, \dots, t_{n-1}, k\}$ has two connected components, where $k$ is an arc with both endpoints in $\partial \Sigma$ and which intersect only $t_n$ among the curves in $S$. In either case, we pick a connected component, and denote it by $\Sigma_+$. 
Similarly, let $\Sigma_{\#}$ be the connected component that does not contain $\partial \Sigma$ of respectively $\Sigma-\bigcup\{t_2, t_4, \dots t_{n-1}, \ell, \ell'\}$ in case of odd $n$, and of $\Sigma-\bigcup\{t_2, t_4, \dots t_{n}, k'\}$ in case of even $n$. Here, $\ell,\ell'$ denote arcs with the endpoint in two connected components of $\partial
\Sigma$ where $\ell$ intersects only $t_1$, and $\ell'$ intersects only $t_n$ among curves in $S$. The arc $k'$ has both endpoint in $\partial \Sigma$ and intersects only $t_1$ among curves in $S$.

Let $T  = \{t_i, \dots, t_j\}$. If $|T|$ is even, then $\Sigma_T$ has a unique boundary component, denoted by $t_{i:j}$. Otherwise, if $|T|$ is odd, then we denote the two boundary component of $\Sigma_T$ by $\{t_{i:j}, t_{i:j}'\}$. See Figure~\ref{fig:curves in An}. 
If additionally, $i$ is odd, then again exactly one of these curves is contained in $\Sigma_+$. We assume it is $t_{i:j}$.
Similarly, if $i$ is even, then we assume that $t_{i:j}$ is contained in $\Sigma_\#$. 
For each irreducible set $T=\{t_i,\dots, t_j\}$, we set $p(T) = t_{i:j}$, if $i>1$.

We claim that our choice function $p$ satisfies Lemma~\ref{lem:PP for surfaces}.
By the first part of Lemma~\ref{l:curves1}, we only need to worry about irreducible subsets of odd cardinality. Suppose we have two subsets $T  = \{t_i, \dots, t_j\}$ and $T'  = \{t_{i'}, \dots, t_{j'}\}$ of odd cardinality which do not commute, i.e.\ $[z_T, z_{T'}]\neq 1$ in $\RA$. 

We claim that if $i$ is odd and $i'$ is even or vice versa, then each curve in $\partial \Sigma_T$ intersects each curve in $\partial \Sigma_{T'}$, (this is the second part of Lemma~\ref{l:curves2}). To see this, note that it suffices to assume that $T = \{1,2,\dots, j\}$ and $T' = \{k, \dots, l\}$. Since $j$ is odd, $\partial \Sigma_T$ has two components, and both components of $\partial \Sigma_{T'}$ intersect the curve $t_{k-1}$ exactly once (and miss all other $t_i$ for $i \le j$). Therefore, each component of $\partial\Sigma_{T'}$ intersects both components of $\Sigma_T - \{t_1, t_3, \dots, t_{k-1}, \dots t_j\}$, and therefore must intersect both components of $\partial \Sigma_T$.

If $i$ and $i'$ are both odd, then $p(T)$ cannot intersect the curve of $\partial \Sigma_{T'}$ which is contained in $\Sigma-\Sigma_+$, and by the second part of Lemma \ref{l:curves1}, $p(T)$ intersects $p(T')$. Similarly, if $i$ and $i'$ are both even, then $p(T)$ and $p(T')$ intersect.

Finally, note that the third condition in Lemma \ref{lem:PP for surfaces} is trivially satisfied, as no curve is contained in $\partial \Sigma_T$ and $\partial \Sigma_{T'}$ for $T \ne T'$. 
\end{proof}

\begin{figure}
\centering
\begin{tikzpicture}[scale=0.8]
\begin{scope}
\draw (2,-2) to[out=180, in=180] (2,2);
\draw (2,-2) to (8, -2);
\draw (2,2) to (8, 2);
\draw (2, -0) to[out=-30, in=210] (3, 0);
\draw (2.2,-0.1) to[out=40, in=140] (2.8, -0.1);

\draw (4, -0) to[out=-30, in=210] (5, 0);
\draw (4.2,-0.1) to[out=40, in=140] (4.8, -0.1);

\draw (6, -0) to[out=-30, in=210] (7, 0);
\draw (6.2,-0.1) to[out=40, in=140] (6.8, -0.1);

\draw[dotted, thick] (7.5,0) to (7.75,0);

\draw[ForestGreen] (6.5, 0) to[out=70 , in=290] (6.5, 2);
\draw[ForestGreen, dotted] (6.5, 0) to[out=110 , in=250] (6.5, 2);
\node[ForestGreen, draw=none,fill=none] at (7.05,1.4) {$t_{1:5}'$};

\draw[ForestGreen, very thick] (6.5, -0.15) to[out=290 , in=70] (6.5, -2);
\draw[ForestGreen, dotted, very thick] (6.5, -0.15) to[out=250 , in=110] (6.5, -2);
\node[ForestGreen, draw=none,fill=none] at (7.05,-1.4) {$t_{1:5}$};

\draw[ForestGreen] (4.5, 0) to[out=70 , in=290] (4.5, 2);
\draw[ForestGreen, dotted] (4.5, 0) to[out=110 , in=250] (4.5, 2);
\node[ForestGreen, draw=none,fill=none] at (5.05,1.4) {$t_{1:3}'$};

\draw[ForestGreen, very thick] (4.5, -0.15) to[out=290 , in=70] (4.5, -2);
\draw[ForestGreen, dotted, very thick] (4.5, -0.15) to[out=250 , in=110] (4.5, -2);
\node[ForestGreen, draw=none,fill=none] at (5.05,-1.4) {$t_{1:3}$};

\draw[ForestGreen,dotted,very thick] (5.5, -2) to[out=105, in=255] (5.5, 2);
\draw[ForestGreen, very thick] (5.5,2) to[out=285, in=75] (5.5, -2);
\node[ForestGreen, draw=none,fill=none] at (6.1,1.2) {$t_{1:4}$};

\draw[ForestGreen,dotted, very thick] (3.5, -2) to[out=105, in=255] (3.5, 2);
\draw[ForestGreen,very thick] (3.5,2) to[out=285, in=75] (3.5, -2);
\node[ForestGreen, draw=none,fill=none] at (4.1,1.2) {$t_{1:2}$};
\end{scope}
\begin{scope}[shift={(8,0)}]
\draw (2,-2) to[out=180, in=180] (2,2);
\draw (2,-2) to (9, -2);
\draw (2,2) to (9, 2);

\draw (2, -0) to[out=-30, in=210] (3, 0);
\draw (2.2,-0.1) to[out=40, in=140] (2.8, -0.1);

\draw (4, -0) to[out=-30, in=210] (5, 0);
\draw (4.2,-0.1) to[out=40, in=140] (4.8, -0.1);

\draw (6, -0) to[out=-30, in=210] (7, 0);
\draw (6.2,-0.1) to[out=40, in=140] (6.8, -0.1);

\draw (8, -0) to[out=-30, in=210] (9, 0);
\draw (8.2,-0.1) to[out=40, in=140] (8.8, -0.1);

\draw[dotted, thick] (9.5,0) to (9.75,0);

\draw[purple,very thick] (5.5, 0) ellipse (1.8 and 0.5);
\node[purple, draw=none,fill=none] at (5.5,-0.25) {$t_{4:6}$};

\draw[purple, very thick] (2.7, -0.03) to[out= 30, in = 90] (7.5,0) to[out =270, in=340] (2.7, -0.13);
\draw[purple, dotted, very thick] (2.7, -0.03) to[out= 35, in = 90] (7.6,0) to[out =270, in=335] (2.7, -0.13);
\node[purple, draw=none,fill=none] at (4.05,-0.9) {$t_{3:6}$};

\draw[purple] (2.5, 0.01) to[out= 60, in = 120] (8.5,0.01);
\draw[purple, dotted] (2.5, 0.01) to[out= 65, in = 115] (8.5,0.01);
\draw[purple,very thick] (2.5, -0.15) to[out= -60, in = -120] (8.5,-0.15);
\draw[purple, dotted, very thick] (2.5, -0.15) to[out= -65, in = -115] (8.5,-0.15);
\node[purple, draw=none,fill=none] at (7.6,1.35) {$t_{3:7}'$};
\node[purple, draw=none,fill=none] at (7.9,-1.35) {$t_{3:7}$};

\end{scope}
\end{tikzpicture}
\caption{Curves $s_2, s_3, s_3',\dots s_{2k}, s_{2k+1}, s_{2k+1}'\dots$ and curves $t_{i:j}, t_{i:j}'$ in the surface $\Sigma$ of type $A_n$. The thick curves are an example of a choice of the representatives for property PP.}
\label{fig:curves in An}
\end{figure}
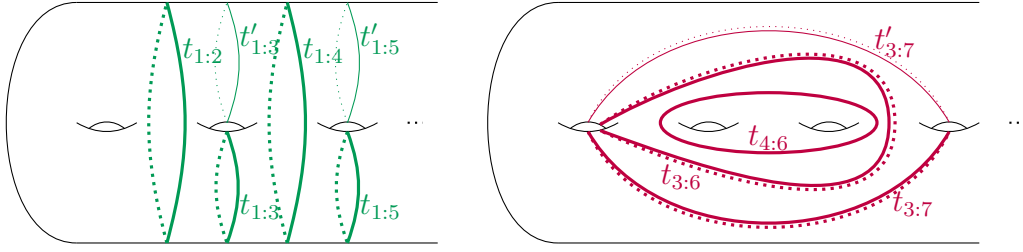

We do not know if $N = 2$ suffices for the braid group. In fact, Runnels and Seo have independently and recently shown an effective version of Koberda's result \cite{runnels}, \cite{seo}.  For the collections of Dehn twists that arise in the braid group case, it turns out that $N = 17$ works, and we suspect there is a not very large $N$ that works for all Artin groups.

\subsection{Type $D_n$}
We now consider an Artin group $A$ of type $D_n$ with the standard generating set $S=\{s,s', t_1, \dots t_n\}$, as in Figure~\ref{fig:Dn}.
\begin{Theorem}\label{thm:conjecture for Dn}
Generalized Tits Conjecture holds for all spherical Artin groups of type $D_n$.
\end{Theorem}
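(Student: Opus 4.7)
The plan is to combine the Perron--Vannier representation $\rho:A\to\Mod(\Sigma)$ of Figure \ref{fig:Dn} with Koberda's Theorem \ref{t:koberda} and the generalized Property PP of Section \ref{s:raags}. Just as in the alternative proof of Proposition \ref{prop:conjecture for An}, for $N$ sufficiently large the $N$-th powers of Dehn twists about the collection $\mathcal C$ of all curves appearing in the multicurves $\{\partial\Sigma_T\}_{T\in \mathcal S}$ generate a RAAG $\RA_{\mathcal C}<\Mod(\Sigma)$. By Lemma \ref{lem:generators multitwists}, each $\rho\circ\Phi_N(z_T)$ is a product of commuting generators of $\RA_{\mathcal C}$, so it suffices to show that the induced homomorphism $\RA\to\RA_{\mathcal C}$ is injective.

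Unlike the type $A_n$ case, no single choice function $p:T\mapsto p(T)\in\partial\Sigma_T$ can realize Property PP of Definition \ref{def:PP} globally: when $T$ is of type $D_m$ with $m$ even, $\Sigma_T$ has three boundary components, and Lemma \ref{l:curves2} shows their intersection pattern with the other $\partial\Sigma_{T'}$ depends both on which inner component of $\partial\Sigma_T$ one chooses and on whether $s,s'\in T'$. Instead, I would apply Theorem \ref{thm:generalized PP} to a decomposition $L=L_1\cup_{L_0} L_2$ of the nerve of $\RA$: let $L_1$ be the full subcomplex spanned by irreducible subsets $T$ containing $s$ together with those $T\subseteq\{t_1,\dots,t_{n-2}\}$, let $L_2$ be the symmetric subcomplex obtained by swapping $s$ and $s'$, and observe that $L_0:=L_1\cap L_2$ consists exactly of the $D$-type subsets (which contain both $s$ and $s'$) together with the irreducible subsets of $\{t_1,\dots,t_{n-2}\}$, while $L_1\cup L_2$ is all of $L$.

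For each $L_i$ I would then choose a representative $p_i(T)\in\partial\Sigma_T$ as follows: for $D$-type $T$ pick the non-central inner component on the ``$s$-side'' in $L_1$ (and on the ``$s'$-side'' in $L_2$); for $T$ of type $A$ containing $s$ but not $s'$, which only appears in $L_1\setminus L_0$, pick the component of $\partial\Sigma_T$ lying on the half of $\Sigma$ containing the $s$-curve; and for $T\subseteq\{t_1,\dots,t_{n-2}\}$ use the representatives of the alternative proof of Proposition \ref{prop:conjecture for An}. Verifying Property PP for each $\{w_T\}_{T\in L_i}$ separately then becomes a case analysis based on Lemma \ref{l:curves2}; the third item of that lemma is the key input, since it controls the intersection pattern of the non-central boundary components chosen for two distinct $D$-type subsets.

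The main technical obstacle will be verifying the avoidance condition of Definition \ref{d:avoids}: for $T\in L_i\setminus L_0$, the chosen curve $p_i(T)$ must not lie in $\partial\Sigma_{T'}$ for any $T'\in L_0$ and in fact for any other $T''$. This reduces to showing that the side-specific components selected for subsets containing $s$ (respectively $s'$) but not both are disjoint from all curves $\partial\Sigma_{T'}$ associated to subsets contained in the middle path $\{t_1,\dots,t_{n-2}\}$ and from the inner components on the opposite side, which should follow directly from the explicit placement of curves in Figure \ref{fig:Dn}. Once these are in place, Theorem \ref{thm:generalized PP} yields the required injection $\RA\hookrightarrow\RA_{\mathcal C}$, completing the proof.
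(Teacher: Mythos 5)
Your plan is the paper's plan: Perron--Vannier, Koberda, and generalized Property PP with a decomposition separating the ``$s$-side'' from the ``$s'$-side''. But there is a genuine gap in how you set up the decomposition. Theorem~\ref{thm:generalized PP} requires decomposing the \emph{ambient} flag complex $L$ that carries the words --- here the flag complex on the curve set $\mathcal C$ underlying $\RA_{\mathcal C}$ --- into full subcomplexes $L = L_1 \cup_{L_0} L_2$, and one must check that this really is a decomposition, i.e.\ that no simplex of $L$ has vertices in both $L_1 \setminus L_0$ and $L_2 \setminus L_0$. The paper does this by splitting curves: $\mathcal C_1 = \mathcal C - \{r_k'\}_{k\geq 2}$, $\mathcal C_2 = \mathcal C - \{r_k\}_{k\geq 2}$, and the validity of the resulting decomposition is exactly what Lemma~\ref{l:curves2}(3) supplies --- the curves $r_i$ and $r_j'$ always intersect, so no simplex of $L$ contains one of each. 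You instead propose decomposing the nerve of $\RA$, which is not what the theorem's hypothesis speaks to, and you misattribute the role of Lemma~\ref{l:curves2}(3), citing it for intersections of curves chosen for distinct $D$-type subsets; but every boundary component of a $D$-type $\Sigma_T$ lies in $\partial\Sigma_{D_m}$ for some $m$, so item~(3) has nothing to say about them. Its actual role is to legitimize the amalgam decomposition of $L$, and without that step Theorem~\ref{thm:generalized PP} does not apply.

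The choice functions you describe for families (1) and (3) agree with the paper's ($s_{k-1}$ and $r_k$, respectively). For family (2), however, the paper selects the component of $\partial\Sigma_{\{t_i,\dots,t_j\}}$ that meets the $r_k$'s, rather than simply importing the $A_n$ representatives, and the avoidance condition of Definition~\ref{d:avoids} is not a ``check the figure'' matter but a short argument (each $r_k$ lies on the boundary of a unique $\Sigma_T$, which gives condition~(2)). These two verifications, together with the decomposition of $L$ itself, are where the actual content of the proof lives and are the pieces your sketch leaves out.
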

\begin{proof}
This proof will use the generalized property PP from the last section. 
There are four families of irreducible subsets of $S$ with at least two elements. 
\begin{enumerate}[label=(\arabic*)]
\item If $T=\{s,s', t_1,\dots, t_j\}$ where $j\leq n-2$, then $A_T$ has type $D_{j+2}$.
\item If $T=\{t_i, \dots, t_j\}$ where $1\leq i<j\leq n-2$, then $A_T$ has type $A_{j-i+1}$.
\item If $T=\{s', t_1,\dots, t_j\}$ where $1\leq j\leq n-2$, then $A_T$ has type $A_{j+1}$.
\item If $T=\{s, t_1,\dots, t_j\}$ where $1\leq j\leq n-2$, then $A_T$ has type $A_{j+1}$.
\end{enumerate}

We consider the Perron-Vannier representation $\rho:A \to\Mod(\Sigma)$ (see Section~\ref{sec:reps} and Figure~\ref{fig:Dn}). By Lemma~\ref{lem:generators multitwists} the generator $z_T$ of $\RA$ is sent to a multitwist about the boundary of a subsurface $\Sigma_T$. 

We pick a connected component $\Sigma_+$ of, respectively, $\Sigma-\{t_2, t_4, \dots, t_{n-2}, \ell\}$ when $n$ is even, and of $\Sigma-\{t_2, t_4, \dots, t_{n-3}, \ell, \ell'\}$ when $n$ is odd.
In both cases $\ell$ denotes an arc with both endpoints in the central boundary component of $\Sigma$ that intersects $t_1$ and no other curves in $S$. When $n$ is odd, then $\ell'$ denotes an arc with both endpoints in the unique non-central boundary component of $\Sigma$, that intersects $t_{n-2}$ and no other curves in $S$. 
We can also pick a connected component $\Sigma_\#$ of, respectively, $\Sigma-\{t_1, t_3, \dots, t_{n-3}, k_1, k_2, k_3\}$ when $n$ is even, and of $\Sigma-\{t_1, t_3, \dots, t_{n-2}, k_1, k_2\}$ when $n$ is odd. By $k_1, k_2, k_3$ we denote arc with endpoints in distinct connected components of $\partial \Sigma$ that intersect only respectively $s,s', t_{n-2}$ (the last one only when $n$ is even) among curves of $S$.

Let us now analyze what the multicurves $\partial \Sigma_T$ are for $T$ in the four families of irreducible subsets of $S$. Let $s_0$ denote the central component of $\partial \Sigma$.

\begin{enumerate}
\item For each irreducible set $T = \{s,s', t_1, \dots, t_j\}$, the multicurve $\partial \Sigma_T$ is of the form $\{s_0, s_{j+1}, s_{j+1}'\}$ when $|T|$ is even, and of the form $\{s_0, s_{j}\}$ when $|T|$ is odd. We also set $s_1 = s$ and $s_1' = s'$.
See the left side of Figure~\ref{fig:curves in Dn}. Without loss of generality, we can assume that all the curves $s_j$ are contained in $\Sigma_+$.

\begin{figure}\label{fig:curves in Dn}
\centering
\begin{tikzpicture}[scale=0.8]
\begin{scope}
\draw (1,-2) to[out=105, in=255] (1,2);
\draw (1,2) to[out=285, in=75] (1,-2);
\draw (1,-2) to (8, -2);
\draw (1,2) to (8, 2);

\draw (2, -0) to[out=-30, in=210] (3, 0);
\draw (2.2,-0.1) to[out=40, in=140] (2.8, -0.1);

\draw (4, -0) to[out=-30, in=210] (5, 0);
\draw (4.2,-0.1) to[out=40, in=140] (4.8, -0.1);

\draw (6, -0) to[out=-30, in=210] (7, 0);
\draw (6.2,-0.1) to[out=40, in=140] (6.8, -0.1);

\draw[dotted, thick] (7.5,0) to (7.75,0);

\draw[ForestGreen] (6.5, 0) to[out=70 , in=290] (6.5, 2);
\draw[ForestGreen, dotted] (6.5, 0) to[out=110 , in=250] (6.5, 2);
\node[ForestGreen, draw=none,fill=none] at (7,1.2) {$s_5'$};

\draw[ForestGreen] (6.5, -0.15) to[out=290 , in=70] (6.5, -2);
\draw[ForestGreen, dotted] (6.5, -0.15) to[out=250 , in=110] (6.5, -2);
\node[ForestGreen, draw=none,fill=none] at (7,-1.2) {$s_5$};

\draw[ForestGreen] (4.5, 0) to[out=70 , in=290] (4.5, 2);
\draw[ForestGreen, dotted] (4.5, 0) to[out=110 , in=250] (4.5, 2);
\node[ForestGreen, draw=none,fill=none] at (5,1.2) {$s_3'$};

\draw[ForestGreen] (4.5, -0.15) to[out=290 , in=70] (4.5, -2);
\draw[ForestGreen, dotted] (4.5, -0.15) to[out=250 , in=110] (4.5, -2);
\node[ForestGreen, draw=none,fill=none] at (5,-1.2) {$s_3$};

\draw[ForestGreen] (2.5, 0) to[out=70 , in=290] (2.5, 2);
\draw[ForestGreen, dotted] (2.5, 0) to[out=110 , in=250] (2.5, 2);
\node[ForestGreen, draw=none,fill=none] at (3,1.2) {$s_1'$};

\draw[ForestGreen] (2.5, -0.15) to[out=290 , in=70] (2.5, -2);
\draw[ForestGreen, dotted] (2.5, -0.15) to[out=250 , in=110] (2.5, -2);
\node[ForestGreen, draw=none,fill=none] at (3,-1.2) {$s_1$};

\draw[ForestGreen,dotted] (5.5, -2) to[out=105, in=255] (5.5, 2);
\draw[ForestGreen] (5.5,2) to[out=285, in=75] (5.5, -2);
\node[ForestGreen, draw=none,fill=none] at (6.1,1) {$s_4$};

\draw[ForestGreen,dotted] (3.5, -2) to[out=105, in=255] (3.5, 2);
\draw[ForestGreen] (3.5,2) to[out=285, in=75] (3.5, -2);
\node[ForestGreen, draw=none,fill=none] at (4.1,1) {$s_2$};

\draw[ForestGreen,dotted] (1.5, -2) to[out=105, in=255] (1.5, 2);
\draw[ForestGreen] (1.5,2) to[out=285, in=75] (1.5, -2);
\node[ForestGreen, draw=none,fill=none] at (2.1,1) {$s_0$};
\end{scope}
\begin{scope}[shift={(7.5,0)}]
\draw (1.5,-2) to[out=105, in=255] (1.5,2);
\draw (1.5,2) to[out=285, in=75] (1.5,-2);
\draw (1.5,-2) to (9, -2);
\draw (1.5,2) to (9, 2);

\draw (2, -0) to[out=-30, in=210] (3, 0);
\draw (2.2,-0.1) to[out=40, in=140] (2.8, -0.1);

\draw (4, -0) to[out=-30, in=210] (5, 0);
\draw (4.2,-0.1) to[out=40, in=140] (4.8, -0.1);

\draw (6, -0) to[out=-30, in=210] (7, 0);
\draw (6.2,-0.1) to[out=40, in=140] (6.8, -0.1);

\draw (8, -0) to[out=-30, in=210] (9, 0);
\draw (8.2,-0.1) to[out=40, in=140] (8.8, -0.1);

\draw[dotted, thick] (9.5,0) to (9.75,0);

\draw[purple] (5.5, 0) ellipse (1.8 and 0.5);
\node[purple, draw=none,fill=none] at (5.5,-0.25) {$t_{3:5}$};

\draw[purple] (2.7, -0.03) to[out= 30, in = 90] (7.5,0) to[out =270, in=340] (2.7, -0.13);
\draw[purple, dotted] (2.7, -0.03) to[out= 35, in = 90] (7.6,0) to[out =270, in=335] (2.7, -0.13);
\node[purple, draw=none,fill=none] at (4.05,-0.9) {$t_{2:5}$};

\draw[purple] (2.5, 0.01) to[out= 60, in = 120] (8.5,0.01);
\draw[purple, dotted] (2.5, 0.01) to[out= 65, in = 115] (8.5,0.01);
\draw[purple] (2.5, -0.15) to[out= -60, in = -120] (8.5,-0.15);
\draw[purple, dotted] (2.5, -0.15) to[out= -65, in = -115] (8.5,-0.15);
\node[purple, draw=none,fill=none] at (7.6,1.35) {$t_{2:6}'$};
\node[purple, draw=none,fill=none] at (7.9,-1.35) {$t_{2:6}$};

\end{scope}
\end{tikzpicture}
\caption{Curves $s_0, s_1=2, s_1' = s',\dots s_{2k}, s_{2k+1}, s_{2k+1}'\dots$ and curves $t_{i:j}, t_{i:j}'$ in the surface $\Sigma$ corresponding to the Artin group of type $A_n$ or $D_n$. In the case of $D_n$, the boundary of $\Sigma$ includes the gray curve.}
\label{fig:curves in Dn}
\end{figure}
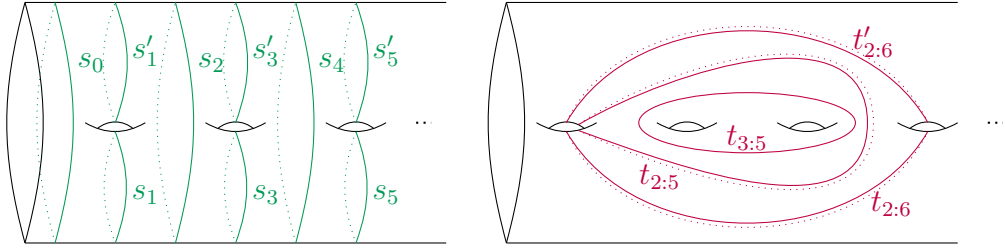
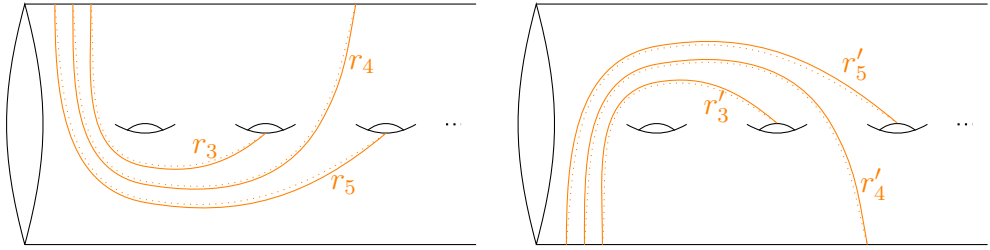
\begin{figure}
\centering
\begin{tikzpicture}[scale=0.8]
\begin{scope}
\draw (0.5,-2) to[out=105, in=255] (0.5,2) to[out=285, in=75] (0.5,-2);
\draw (0.5,-2) to (8, -2);
\draw (0.5,2) to (8, 2);

\draw (2, -0) to[out=-30, in=210] (3, 0);
\draw (2.2,-0.1) to[out=40, in=140] (2.8, -0.1);

\draw (4, -0) to[out=-30, in=210] (5, 0);
\draw (4.2,-0.1) to[out=40, in=140] (4.8, -0.1);

\draw (6, -0) to[out=-30, in=210] (7, 0);
\draw (6.2,-0.1) to[out=40, in=140] (6.8, -0.1);

\draw[dotted, thick] (7.5,0) to (7.75,0);

\draw[orange] (1.6, 2) to[out = 270, in=170] (2.5, -0.7) to[out = -10, in= 220] (4.5, -0.15);
\draw[orange, dotted] (1.6, 2) to[out = 275, in=170] (2.5, -0.65) to[out = -10, in= 215] (4.5, -0.15);
\node[orange, draw=none,fill=none] at (3.5,-0.4) {$r_3$};

\draw[orange] (1.3, 2) to[out = 270, in=170] (2.5, -1) to[out = -10, in= 240] (5.4, 0) to[out=60, in=260] (6,2);
\draw[orange, dotted] (1.3, 2) to[out = 275, in=170] (2.5, -0.95) to[out = -10, in= 240] (5.35, 0) to[out=60, in=255] (6,2);
\node[orange, draw=none,fill=none] at (6.1,1) {$r_4$};

\draw[orange] (1, 2) to[out = 270, in=170] (2.5, -1.3) to[out = -10, in= 220] (6.5, -0.15);
\draw[orange, dotted] (1, 2) to[out = 275, in=170] (2.5, -1.25) to[out = -10, in= 215] (6.5, -0.15);
\node[orange, draw=none,fill=none] at (5.8,-1) {$r_5$};

\end{scope}
\begin{scope}[shift={(8.5,0)}]
\draw (0.5,-2) to[out=105, in=255] (0.5,2) to[out=285, in=75] (0.5,-2);
\draw (0.5,-2) to (8, -2);
\draw (0.5,2) to (8, 2);

\draw (2, -0) to[out=-30, in=210] (3, 0);
\draw (2.2,-0.1) to[out=40, in=140] (2.8, -0.1);

\draw (4, -0) to[out=-30, in=210] (5, 0);
\draw (4.2,-0.1) to[out=40, in=140] (4.8, -0.1);

\draw (6, -0) to[out=-30, in=210] (7, 0);
\draw (6.2,-0.1) to[out=40, in=140] (6.8, -0.1);

\draw[dotted, thick] (7.5,0) to (7.75,0);

\draw[orange] (1.6, -2) to[out = 90, in=-170] (2.5, 0.7) to[out = 10, in= -220] (4.5, 0.02);
\draw[orange, dotted] (1.6, -2) to[out = -275, in=-170] (2.5, 0.65) to[out = 10, in= -215] (4.5, 0.02);
\node[orange, draw=none,fill=none] at (3.5,0.3) {$r_3'$};

\draw[orange] (1.3, -2) to[out = -270, in=-170] (2.5, 1) to[out = 10, in= -240] (5.4, 0) to[out=-60, in=-260] (6,-2);\draw[orange, dotted] (1.3, -2) to[out = -275, in=-170] (2.5, 0.95) to[out = 10, in= -240] (5.35, 0) to[out=-60, in=-255] (6,-2);
\node[orange, draw=none,fill=none] at (6.1,-1) {$r_4'$};

\draw[orange] (1, -2) to[out = -270, in=-170] (2.5, 1.3) to[out = 10, in= -220] (6.5, 0.02);
\draw[orange, dotted] (1, -2) to[out = -275, in=-170] (2.5, 1.25) to[out = 10, in=- 215] (6.5, 0.02);
\node[orange, draw=none,fill=none] at (5.8,1) {$r_5'$};
\end{scope}
\end{tikzpicture}
\caption{Curves $r_2,r_3, \dots$ and $r_2',r_3', \dots$ .}
\label{fig:more curves in Dn}
\end{figure}

\item Now consider an irreducible subset $T=\{t_i,\dots, t_j\}$ where $1\leq i<j\leq n-2$. If $|T|$ is even, let $t_{i:j}$ be the unique boundary curve of the subsurface $\Sigma_T$. 
If $|T|$ is odd, let $t_{i:j}, t_{i:j}'$ be the two boundary curves of the subsurface containing $t_i, \dots, t_j$. 
See the right side of Figure~\ref{fig:curves in Dn}. 
If $i$ is even, we assume that $t_{i:j}$ lies in $\Sigma_+$, and if $i$ is odd, we assume that $t_{i:j}$ is contained in $\Sigma_\#$. 

\item[(3)/(4)]Finally consider an irreducible subset $T = \{s', t_1, \dots t_j\}$. When $j$ is odd, we denote the unique boundary component of $\Sigma_T$ by $r_{j+1}$. When $j$ is even, the boundary of the subsurface $\partial \Sigma_T$ has two connected components, one is $s_{j+1}'$, and the other is denoted by $r_{j+1}$. See Figure~\ref{fig:more curves in Dn}. We define curves $r_2', r_3',\dots $ analogously, so that $\partial \Sigma_T = \{r_{j+1}'\}$ or $\{s_{j+1}, r_{j+1}'\}$ for $T=\{s', t_1, \dots t_j\}$.
\end{enumerate}

Let $\mathcal C = \{s_{j}, s'_{j}\}\cup \{t_j\}\cup \{t_{i:j}, t_{i,j}'\} \cup \{r_j, r_j'\}$ be the collection of all the curves that arise as boundary curves of subsurfaces $\Sigma_T$ for irreducible subsets $T\subseteq S$. By Theorem \ref{t:koberda} the subgroup generated by sufficiently large powers of Dehn twists around curves in $\mathcal C$ is a RAAG, which we denote by $\RA_{\mathcal C}$.
Note that the words $w_\gs$ corresponding to centers of irreducible spherical Artin subgroups, regarded as elements of $\RA_{\mathcal C}$ do not satisfy property PP, see Figure \ref{f:noPP}.

We now show that generalized property PP implies the conjecture for Artin groups of type $D_n$.  
Let $\mathcal C_1 = \mathcal C-\{r_k'\}_{k\geq 2}$, $\mathcal C_2 = \mathcal C- \{r_k\}_{k\geq 2}$, and $\mathcal C_0 = \mathcal C_1\cap \mathcal C_2 = \mathcal C - \{r_k, r_k'\}_{k\geq 2} $. 

Let $L$ be the flag complex on the vertex set $\mathcal C$ defining $\RA_{\mathcal C}$, and let $L_0, L_1, L_2$ be the full subcomplexes of $L$ on the sets $\mathcal C_0,\mathcal C_1, \mathcal C_2$ respectively. Since the curves $r_i$ and $r_j'$ intersect for every $i, j\geq 2$, the complex $L$ decomposes as $L_1\cup_{L_0}L_2$. Therefore we have a splitting of the $\RA_{\mathcal C}$ as
$A_{L_1} \ast_{A_{L_0}} A_{L_2}$ 
where $A_{L_i}$ is a RAAG with nerve $L_i$. Also, the $\RAAG$ $\RA$ associated with $A$ splits as $\RA = \RA_{L_1} \ast_{\RA_{L_0}} \RA_{L_2}$ where 
\begin{itemize}
\item $\RA_{L_1}$ omits all the generators corresponding to centers of the irreducible special subgroups from family (4), i.e.\ $A_{T}$ with $T=\{s, t_1,\dots, t_k\}$ and $k\geq1$,
\item $\RA_{L_2}$ omits generators corresponding to centers of the irreducible special subgroups from family (3), i.e.\ $A_{T}$ with $T=\{s', t_1,\dots, t_k\}$ and $k\geq1$, and
\item $\RA_{L_0}$  omits all the generators corresponding to centers of the irreducible special subgroups from families (3) and (4).
\end{itemize}

We claim that each $L_i$ satisfies Property PP avoiding $L_0$.
We only verify it for $L_1$, the proof for $L_2$ is analogous. For each irreducible subset $T$ (which corresponds to a simplex in $L$ spanned by the boundary curves of its corresponding subsurface $\Sigma_T$) from families (1), (2) and (3), we need to make a choice of a boundary curve of $\Sigma_T$. For a subgroup from family (3) of type $A_{k}$ we choose the curve $r_{k}$. Note that these subsets are exactly the ones corresponding to simplices $\sigma$ of $L$ with nontrivial $w_{\sigma}$ such that $\sigma\notin L_0$. We thus see that $p(\sigma)\notin L_0$ for such simplices $\sigma$. Also, $w_{\tau}$ is trivial for any other simplex $\tau$ containing $p(\sigma)$, because $\Sigma_T$ is the unique subsurface corresponding to an irreducible subset of $S$ whose boundary contains $r_k$. 

By Lemma \ref{l:curves2}, for each irreducible subset $T$ from family (1), there is a unique boundary curve of $\Sigma_T$ which intersects $\partial \Sigma_U$ for $U$ in family (3). Indeed, it is the curve $s_{k-1}$, see Figure~\ref{fig:curves in Dn}.

Finally, for an irreducible subset in family (2), we pick the curve $t_{i:j}$, i.e.\ a curve in $\partial\Sigma_T$ that intersects $r_k$ for every $i\leq k\leq j$, which is unique unless $i,j$ are both odd (in which case either choice works).

For each simplex in $L_1 - L_0$, we have chosen curves not contained in $L_0$. Also, for each simplex in $L_1 - L_0$, we have chosen a simple closed curve that is not a boundary curve of any subsurface corresponding to a different irreducible Artin subgroup. 
Apart from the curve $s_0$ (which is not in the image of $p$), the only curves contained in two multicurves are the $s'_{2k+1}$. Since $p(T) = s_{2k+1}$ for $A_T$ of type $D_{2k+2}$, and $p(T) = r_{2k+1}$ for $A_T$ of type $A_{2k+1}$ from family $(3)$ this guarantees the third condition in Lemma \ref{lem:PP for surfaces} is satisfied. 
By Theorem~\ref{thm:generalized PP}, the conclusion follows.
\end{proof}
\begin{figure}
\centering
\begin{tikzpicture}[scale=0.8]
\draw (0.5,-2) to[out=105, in=255] (0.5,2) to[out=285, in=75] (0.5,-2);
\draw (0.5,-2) to (8, -2);
\draw (0.5,2) to (8, 2);

\draw (2, -0) to[out=-30, in=210] (3, 0);
\draw (2.2,-0.1) to[out=40, in=140] (2.8, -0.1);

\draw (4, -0) to[out=-30, in=210] (5, 0);
\draw (4.2,-0.1) to[out=40, in=140] (4.8, -0.1);

\draw (6, -0) to[out=-30, in=210] (7, 0);
\draw (6.2,-0.1) to[out=40, in=140] (6.8, -0.1);

\draw[dotted, thick] (7.5,0) to (7.75,0);

\draw[black] (2.5, 0) ellipse (0.9 and 0.5);

\draw[black] (4.5, 0) ellipse (0.9 and 0.5);

\draw[black] (6.5, 0) ellipse (0.9 and 0.5);

\draw[black] (3,0) to[out=30, in=150] (4,0);
\draw[black, dotted] (2.9,-0.05) to[out=330, in=210] (4.1,-0.05);

\draw[black] (5,0) to[out=30, in=150] (6,0);
\draw[black, dotted] (4.9,-0.05) to[out=330, in=210] (6.1,-0.05);

\draw[black] (2.5, 0) to[out=70 , in=290] (2.5, 2);
\draw[black, dotted] (2.5, 0) to[out=110 , in=250] (2.5, 2);

\draw[black] (2.5, -0.15) to[out=290 , in=70] (2.5, -2);
\draw[black, dotted] (2.5, -0.15) to[out=250 , in=110] (2.5, -2);
\draw[orange] (1, 2) to[out = 275, in=170] (2.5, -1.25) to[out = -10, in= 215] (6.5, -0.15);
\draw[red] (1, -2) to[out = 85, in=190] (2.5, 1.1) to[out = 10, in= 155] (6.5, 0);

\draw[ForestGreen] (4.5, 0) to[out=70 , in=290] (4.5, 2);
\draw[ForestGreen, dotted] (4.5, 0) to[out=110 , in=250] (4.5, 2);

\draw[ForestGreen] (4.5, -0.15) to[out=290 , in=70] (4.5, -2);
\draw[ForestGreen, dotted] (4.5, -0.15) to[out=250 , in=110] (4.5, -2);

\end{tikzpicture}
\caption{These curves show the failure of property PP for the images of centers of $D_n$. The red and orange curves are boundary components of subsurfaces $\Sigma_{A_5}$ and $\Sigma_{A_5'}$. The green curves are the boundary components of the subsurface $\Sigma_{D_4}$. Since the other boundary component misses $\gamma$ and $\gamma'$ respectively, we are forced to choose these. However, this implies that we cannot correctly choose a boundary component of $\partial \Sigma_{D_4}$.}
\label{f:noPP}
\end{figure}
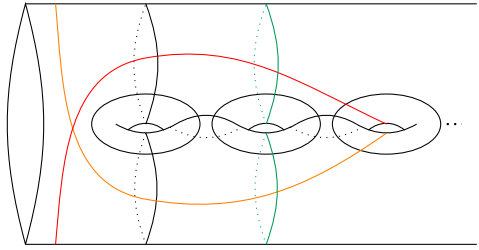

\subsection{Types $E_6$, $E_7$ and $E_8$}\label{ss:e7}

We now show that any homomorphism $A_{E_7} \rightarrow \Mod(\Sigma)$ which sends generators to Dehn twists has nontrivial kernel which intersects the image of $\Phi_m$. Wajnryb has previously shown that there is no injective homomorphism from the Artin groups of type $E_6,E_7, E_8$ to any mapping class group which maps generators to Dehn twists \cite{wajnryb}. It is still open whether these Artin groups admit other faithful representations into mapping class groups.

\begin{figure}
\centering
\begin{tikzpicture}[scale = .85]
\draw (2.5,-2) to[out=180, in=180] (2.5,2);
\draw (2.5,-2) to (8, -2);
\draw (2.5,2) to (8, 2);
\draw[dotted] (8, -2) to[out =105, in=255] (8, -0.5);
\draw (8, -2) to[out=75, in=285] (8, -0.5);
\draw[dotted] (8, 2) to[out =255, in=105] (8, 0.5);
\draw(8, 2) to[out=285, in=75] (8, 0.5);
\draw (8,0.5) to[out=230, in=130] (8,-0.5);

\draw (2, -0) to[out=-30, in=210] (3, 0);
\draw (2.2,-0.1) to[out=40, in=140] (2.8, -0.1);

\draw (4, -0) to[out=-30, in=210] (5, 0);
\draw (4.2,-0.1) to[out=40, in=140] (4.8, -0.1);

\draw (6, -0) to[out=-30, in=210] (7, 0);
\draw (6.2,-0.1) to[out=40, in=140] (6.8, -0.1);

\draw[black] (2.5, 0) ellipse (0.9 and 0.5);

\draw[black] (4.5, 0) ellipse (0.9 and 0.5);

\draw[black] (6.5, 0) ellipse (0.9 and 0.5);

\draw[black] (3,0) to[out=30, in=150] (4,0);
\draw[black, dotted] (2.9,-0.05) to[out=330, in=210] (4.1,-0.05);

\draw[black] (5,0) to[out=30, in=150] (6,0);
\draw[black, dotted] (4.9,-0.05) to[out=330, in=210] (6.1,-0.05);
\node[right] at (4.6,1.2) {$s$};

\draw[black] (7,0) to[out=30, in=160] (7.82,0.09);
\draw[black, dotted] (6.9,-0.05) to[out=330, in=210] (7.82,-0.14);

\draw[black] (4.5, 0) to[out=70 , in=290] (4.5, 2);
\draw[black, dotted] (4.5, 0) to[out=110 , in=250] (4.5, 2);

\draw[ForestGreen] (6.5, -0.15) to[out=290 , in=70] (6.5, -2);
\draw[ForestGreen, dotted] (6.5, -0.15) to[out=250 , in=110] (6.5, -2);

\draw[ForestGreen] (6.5, 0) to[out=70 , in=290] (6.5, 2);
\draw[ForestGreen, dotted] (6.5, 0) to[out=110 , in=250] (6.5, 2);

\draw[red] (2.5, 0.01) to[out=30, in=160] (7.9, .35);
\draw[red, dotted] (2.5, 0.01) to[out=25 , in=165] (7.9, 0.35);

\draw[red] (2.5, -0.15) to[out=-30, in=-160] (7.9, -.35);
\draw[red, dotted] (2.5, -0.15) to[out=-25 , in=-165] (7.9, -.35);

\draw[NavyBlue] (1.32,0) to[out=30, in=150] (2,0);
\draw[NavyBlue, dotted] (1.32,-0.05) to[out=330, in=210] (2.1,-0.05);

\draw[NavyBlue] (2.4, -0.15) to[out=-45, in=-150] (7.95, -.45);
\draw[NavyBlue, dotted] (2.4, -0.15) to[out=-35 , in=-155] (7.95, -.45);

\begin{scope}[shift={(9,0)}, scale = 1.2]
\node[circle, draw, fill, inner sep = 0pt,minimum width = 3pt] (a) at (0,0) {};
\node[circle, draw, fill, inner sep = 0pt,minimum width = 3pt] (b) at (1,0) {};
\node[circle, draw, fill, inner sep = 0pt,minimum width = 3pt] (c) at (2,0) {};
\node[circle, draw, fill, inner sep = 0pt,minimum width = 3pt] (d) at (3,0) {};
\node[circle, draw, fill, inner sep = 0pt,minimum width = 3pt] (e) at (4,0) {};
\node[circle, draw, fill, inner sep = 0pt,minimum width = 3pt] (g) at (5,0) {};
\node[circle, draw, fill, inner sep = 0pt,minimum width = 3pt] (f) at (2,1) {};
\draw (a) -- (b) -- (c) -- (f);
\draw (c) -- (d);
\draw (d) -- (e) -- (g);

\draw[red] (3, 0) ellipse (2.5 and 0.3);
\draw[NavyBlue] (.4, 0) to[out=90, in=225] (1.5, 1);
\draw[NavyBlue] (1.5, 1) to[out=45, in=135] (2.5, 1);
\draw[NavyBlue] (2.5, 1) to[out=-45, in=135] (6, 0);
\draw[NavyBlue] (.4,0) arc(-180:0:2.8cm and .4cm);

\draw[ForestGreen] (2,1.1) to[out = 0, in = 135] (3.2,0);

\draw[ForestGreen] (2,1.1) to[out = 180, in = 0] (1,.3);
\draw[ForestGreen] (1,.3) to[out = 180, in = 45] (-.2,0);
\draw[ForestGreen] (-.2,0) arc(-180:0:1.7cm and .3cm);
\node[right,ForestGreen] at (0,0.5) {$T$};
\node[right,NavyBlue] at (3.5,0.75) {$V$};
\node[below,red] at (3,-0.4) {$U$};

\end{scope}
\end{tikzpicture}

\caption{High powers of Dehn twists around the multicurves above do not generate the obvious $\RAAG$ subgroup. The figure on the left shows the corresponding centers of irreducible Artin subgroups. To see this, start with the Dehn twist around $s$, then conjugate by the red multitwist, then conjugate by the green multitwist. This element commutes with the blue multitwist.
}
\label{f:e7}
\end{figure}
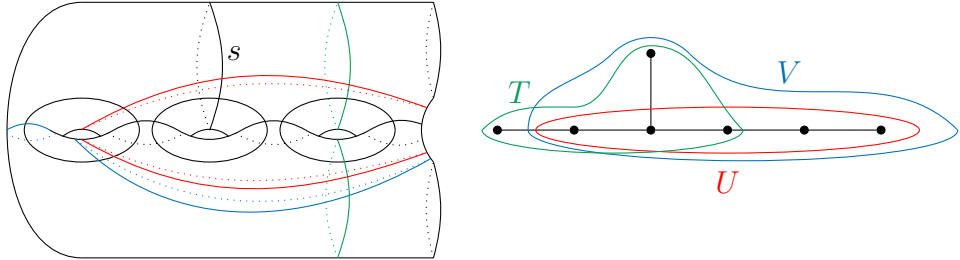

The Dehn twists in Figure \ref{f:e7} show an element in the kernel of the Perron-Vannier representation $\rho: A \rightarrow \Mod(\Sigma)$ where $A$ is of type $E_7$. A specific element in the kernel of $\rho$ is the commutator 
$$[\Delta^{2N}_{T} \Delta^{2N}_{U}s^{2N} \Delta^{-2N}_{U} \Delta^{-2N}_{T}  , \Delta^{2N}_{V}]$$
where $T=\{t_1,t_2, t_3, t_4,s\}$, $U=\{t_2, t_3, t_4, t_5, t_6\}$ and $V=\{t_2, t_3, t_4, t_5, t_6,s\}$. The nerve of the subgroup of $\RA$ on the generators corresponding to $T, \{s\}, V, U$ is a path on $4$ vertices.
It is easy to verify that the corresponding commutator in $\RA$ is nontrivial.
The commutator above is also nontrivial in $A$. This can be verified by computing the Deligne's normal form for its positive representatives. We have performed the computation in GAP 3 with the package CHEVIE \cite{gap3} \cite{chevie}. Note that the above does not imply that the Generalized Tits Conjecture does not hold for Artin group of type $E_7$ (or $E_8$). It only shows that our strategy, using the Perron-Vannier representation, does not work in that case. 

\begin{remark}
The multicurves produced by the Perron-Vannier representation of the Artin group of type $E_6$ do not satisfy generalized property PP, but we cannot find a word in $\RA$ as above in the kernel of the representation. 
\end{remark}

\section{Generalized Tits Conjecture for other spherical Artin groups}\label{s:folding}
In this section, we show the Generalized Tits Conjecture holds for all spherical Artin groups which are not small type. The main tool is a folding trick due to Crisp (see \cite{crisp} or \cite[Section 6]{cp}), which embeds any of these Artin groups into small type spherical Artin groups. 

\subsection{Folding homomorphisms}
In this section it is more convenient to work with Coxeter diagrams rather than nerves.
So, suppose that $A_\gG$ is an Artin group with a connected Coxeter diagram $\gG$ that has no $\infty$-labels. 
Crisp and Paris define a \emph{folding homomorphism} $\Psi: A_\gG \rightarrow A_{\widetilde{\gG}}$ where $A_{\widetilde{\gG}}$ is small-type. Here is the construction: let $N=\text{lcm}\{m_{st} - 1| s \ne t \in S\}$. For each vertex $s\in S$, let $I(s)$ be a set with $N$ elements. For $m\geq3$, let $\gG(m)$ be the Coxeter diagram for $A_{m-1} \times A_{m-1}$, i.e.\ $\gG(m)$ is a disjoint union of two copies of the Coxeter graph of type $A_{m-1}$. See Figure~\ref{fig:Gamma(m)}. 

Let $\widetilde \gG$ be a Coxeter diagram so that:

\begin{itemize}
\item The vertex set of $\widetilde \gG$ is the disjoint union of the sets $I(s)$. 
\item If there is no edge in $\gG$ between $s$ and $t$, there is no edge between vertices in $I(s)$ and the vertices of $I(t)$. In particular, there are no edges between any two vertices in $I(s)$.
\item If $m_{st} \ge 3$, then the subgraph of $\widetilde \gG$ spanned by $I(s) \cup I(t)$ is isomorphic to $\frac{N}{m_{st}-1}$ copies of $\gG(m_{st})$. 
\end{itemize}

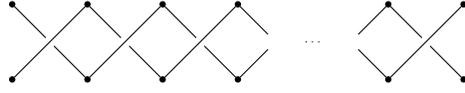
\begin{figure}
\centering
\begin{tikzpicture}
\node[circle, draw, fill, inner sep = 0pt,minimum width = 2pt] (a0) at (0,0) {};
\node[circle, draw, fill, inner sep = 0pt,minimum width = 2pt] (a1) at (1,0) {};
\node[circle, draw, fill, inner sep = 0pt,minimum width = 2pt] (a2) at (2,0) {};
\node[circle, draw, fill, inner sep = 0pt,minimum width = 2pt] (a3) at (3,0) {};
\node[circle, draw, fill, inner sep = 0pt,minimum width = 2pt] (a5) at (5,0) {};
\node[circle, draw, fill, inner sep = 0pt,minimum width = 2pt] (a6) at (6,0) {};
\node[circle, draw, fill, inner sep = 0pt,minimum width = 2pt] (b0) at (0,1) {};
\node[circle, draw, fill, inner sep = 0pt,minimum width = 2pt] (b1) at (1,1) {};
\node[circle, draw, fill, inner sep = 0pt,minimum width = 2pt] (b2) at (2,1) {};
\node[circle, draw, fill, inner sep = 0pt,minimum width = 2pt] (b3) at (3,1) {};
\node[circle, draw, fill, inner sep = 0pt,minimum width = 2pt] (b5) at (5,1) {};
\node[circle, draw, fill, inner sep = 0pt,minimum width = 2pt] (b6) at (6,1) {};
\draw (b0) -- (a1);
\node[circle, draw, fill, inner sep=0pt, minimum width = 4pt, white] at (0.5,0.5) {};
\draw (a0) -- (b1) -- (a2);
\node[circle, draw, fill, inner sep=0pt, minimum width = 4pt, white] at (1.5,0.5) {};
\draw (a1) -- (b2) -- (a3) -- (3.4, 0.4);
\node[circle, draw, fill, inner sep=0pt, minimum width = 4pt, white] at (2.5,0.5) {};
\draw (a2) -- (b3) -- (3.4, 0.6);
\draw[dotted] (3.9, 0.5) -- (4.1, 0.5);
\draw (4.6, 0.6) -- (b5) -- (a6);
\node[circle, draw, fill, inner sep=0pt, minimum width = 4pt, white] at (5.5,0.5) {};
\draw (4.6, 0.4) --  (a5) -- (b6);
\end{tikzpicture}
\caption{The graph $\Gamma(m)$ on $2(m-1)$ vertices.}\label{fig:Gamma(m)}
\end{figure}

It is easy to see that such a $\widetilde \gG$ can always be constructed, though it will not be unique. There is a map of graphs $\widetilde \gG\to \gG$ sending every vertex in $I(s)$ to $s$.
Crisp and Paris show that the map $\Psi: A_{\gG} \rightarrow A_{\widetilde \gG}$ which sends a generator $s$ to the product $\prod_{s_i \in I(s)} s_i$ extends to a homomorphism \cite[Prop 13]{cp}. 
Crisp showed that this homomorphism is injective when restricted to the Artin monoid \cite{crisp}, and it follows that it is injective when $A_\gG$ is spherical. 
 It is still open whether $\Psi$ is injective in general.

We now verify that $\Psi$ has some additional properties.  For each $T \subseteq S$, let $\widetilde T$ denote the preimage of $T$ under $\widetilde \gG\to\gG$, i.e.\ $\widetilde T$ contains the vertices in $I(s)$ for each $s \in T$.

\begin{lemma}\label{lemma:folding homomorphism} Let $A_{\gG}$ be an Artin group with connected Coxeter diagram $\gG$ and no $\infty$-labels. Let $\widetilde \gG$ be as above, and let $\Psi:A_{\gG} \rightarrow A_{\widetilde \gG}$ be the above homomorphism. Then $\Psi$ satisfies the following:

\begin{itemize}
\item For every spherical Artin subgroup $A_T \subseteq A_{\gG}$, $A_{\widetilde{T}}$ is spherical. 
\item If $A_{T}$ is an irreducible spherical subgroup of $A_{\gG}$ , then $$\Psi(\Delta_T^2) = \prod_{T_i \subseteq \widetilde T} \Delta_{T_i}^2$$ where the $\{T_i\}$ are the irreducible components of $\widetilde T$.
\end{itemize}
\end{lemma}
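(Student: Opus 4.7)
The plan is to first reduce to the case where $T$ is irreducible spherical. If $T = T' \sqcup T''$ with $[T', T''] = 1$, then by construction there are no edges of $\widetilde\gG$ between $I(s)$ for $s \in T'$ and $I(t)$ for $t \in T''$; hence $\widetilde T = \widetilde{T'} \sqcup \widetilde{T''}$ as Coxeter diagrams, and both assertions follow inductively (using $\Delta_T^2 = \Delta_{T'}^2\,\Delta_{T''}^2$).

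For irreducible spherical $T$, the core structural claim underlying both parts is: every connected component $T_j$ of $\widetilde T$ is an irreducible spherical diagram whose Coxeter number $h_j$ equals $h := h(W_T)$. Since $\widetilde\gG$ has no edge labels greater than $3$, each component must be of type $A_n$, $D_n$, or $E_n$; one then checks that $h(W_T)$ always lies in the allowed set $\{n+1\}\cup\{2(n-1)\}\cup\{12,18,30\}$ across the irreducible spherical types $A_n, B_n, D_n, E_6, E_7, E_8, F_4, H_3, H_4, I_2(p)$. I would prove this claim by a case-by-case analysis, tracing the structure of $\widetilde T$ directly from the recipe that builds $\widetilde \gG$ from copies of $\Gamma(m_{st})$; for example $\widetilde{I_2(m)}$ is a disjoint union of $A_{m-1}$'s (each with $h = m$), and $\widetilde{H_3}$ can be seen to be a disjoint union of $D_6$'s (each with $h = 10$). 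Part (i) is immediate from the claim.

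For part (ii), I would pick any Coxeter element $c = s_{i_1} s_{i_2} \cdots s_{i_k}$ of $W_T$ and set $\sigma_s := \prod_{v \in I(s)} v \in A_{\widetilde T}$, which is well defined because the vertices of $I(s)$ pairwise commute. Then $\Psi(c) = \sigma_{s_{i_1}} \sigma_{s_{i_2}} \cdots \sigma_{s_{i_k}}$ is a positive word using every generator of $A_{\widetilde T}$ exactly once. Since vertices in distinct components of $\widetilde T$ commute in $A_{\widetilde T}$, I would rearrange $\Psi(c)$ as a product $c_1 c_2 \cdots c_r$ of pairwise commuting factors, where each $c_j$ uses every generator of $T_j$ exactly once and is therefore a Coxeter element of $W_{T_j}$. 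Raising to the $h$-th power and using commutation between components, together with Lemma~\ref{l:bs} and the identity $h = h_j$ from the structural claim, then gives $\Psi(\Delta_T^2) = \Psi(\sigma(c))^h = c_1^h \cdots c_r^h = \Delta_{T_1}^2 \cdots \Delta_{T_r}^2$.

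The main obstacle is the structural claim in the second paragraph. A priori it is delicate both that the components of $\widetilde T$ are spherical at all (their isomorphism types depend on the chosen realization of the copies of $\Gamma(m_{st})$ between $I(s)$ and $I(t)$) and that their Coxeter numbers all coincide with $h(W_T)$. A uniform conceptual argument — perhaps via the order of the image of a Coxeter element, via a reflection-count identity, or via the Garside element of $A_{\widetilde T}$ — would be preferable, but in its absence the type-by-type verification seems unavoidable.
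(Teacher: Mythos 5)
Your proof matches the paper's argument essentially step for step: both reduce the structural claim to a case-by-case verification of the type of each connected component of $\widetilde T$ (the paper delegates this to Lemma~\ref{l:cases}, you would carry out the same cases), both observe that a Coxeter element of $W_T$ maps under $\Psi$ to a Coxeter element of $W_{\widetilde T}$, and both invoke Lemma~\ref{l:bs} together with equality of Coxeter numbers to conclude $\Psi(\Delta_T^2)=\prod_i \Delta_{T_i}^2$. The only difference is cosmetic — the paper first splits off the small-type case (where $\widetilde T$ is a disjoint union of copies of $T$, making the claim obvious) and describes the non-small-type components as $\gG_T$ with the one labeled edge replaced by $\gG(m_{st})$, whereas you treat all types uniformly in the case analysis — and your closing remark correctly identifies that this verification is where the real content lies.
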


\begin{proof}
If $A_T$ is small type, then the Coxeter diagram of $A_{\widetilde T}$ is a disjoint union of finitely many copies of Coxeter diagram for $A_T$, and hence $A_{\widetilde T}$ is a direct product of copies of $A_{T}$.
In this case, both statements follow easily. 
Suppose $\gs$ is not small type. 
If $A_T$ is irreducible, the Coxeter subdiagram $\gG_T$ for $A_T$ is a line, with exactly one edge $e$ labeled with number greater than $3$. 
Let $\widetilde \gG_T^i$ be a connected component of the Coxeter diagram for $\widetilde \gG_T$. Then $\widetilde \gG_{\widetilde T^i}$ is obtained from $\gG_T$ by replacing $e$ with a copy of $\gG(m)$ and then attaching copies of $T - e$. 
In each case we check directly that this produces a spherical Artin group.

For the second statement, by the definition of $\Psi$, a Coxeter element for $W_\gG$ maps to a Coxeter element for $W_{\widetilde \gG}$.  The Coxeter number is preserved in the above identifications (see Lemma \ref{l:cases} below), in that if $\widetilde{\gG}_{\widetilde T^i}$ is connected component of $\widetilde \gG_{T}$, then the Coxeter number of $A_{T}$ is the same as the number for $A_{\widetilde T^i})$. This immediately implies the second statement by Lemma \ref{l:bs}.
\end{proof}

We now record the following specific cases of Lemma~\ref{lemma:folding homomorphism} (these were previously tabulated in \cite{crisp}). 

\begin{lemma}\label{l:cases}
Let $A_\gG$ be a spherical Artin group as above, and let $A_{\widetilde \gG_c}$ be an Artin group in the image of $\Psi$
where $\widetilde \gG_c$ is a connected component of $\widetilde{\gG}$.
Let $h$ be the Coxeter number of the Coxeter system of $A_\gG$. Then the Coxeter number of the Coxeter system of $A_{\widetilde \gG_c}$ is $h$. Moreover:
\begin{enumerate}
\item If $A_\gG= B_n$, then $A_{\widetilde \gG_c}$ has type $D_{n+1}$ or $A_{2n-1}$, and $h$ = 2n.
\item If $A_{\gG} = I_2(p)$, then $A_{\widetilde \gG_c}$ has type $A_{p-1}$ and $h = p$.
\item If $A_{\gG} = H_3$, then $A_{\widetilde \gG_c}$ has type $D_6$ and $h = 10$.
\item If $A_{\gG} = H_4$, then $A_{\widetilde \gG_c}$ has type $E_8$ and $h = 30$. 
\item If $A_{\gG} = F_4$, then $A_{\widetilde \gG_c}$ has type $E_6$ and $h = 12$. 
\end{enumerate}
\end{lemma}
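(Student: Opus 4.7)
The plan is to verify the lemma by case analysis over the five finite irreducible Coxeter types listed. For each case I will (i) compute $N = \operatorname{lcm}\{m_{st}-1 : s\neq t\}$; (ii) describe a specific valid choice of the folded diagram $\widetilde{\gG}$ by specifying the partitions of each $I(s)$ into blocks and the pairings between blocks across each edge; (iii) identify each connected component as the Coxeter diagram of a small-type spherical Artin group; and (iv) match Coxeter numbers. The relevant standard values are $h(A_m) = m+1$, $h(D_m) = 2(m-1)$, $h(E_6) = 12$, $h(E_8) = 30$ on the small-type side, and $h(B_n) = 2n$, $h(I_2(p)) = p$, $h(H_3) = 10$, $h(H_4) = 30$, $h(F_4) = 12$ on the non-small-type side (cf.~\cite{bourbaki}). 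Once the small-type identifications are established, the Coxeter number assertion in each case is a mechanical comparison.

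The case $I_2(p)$ is immediate: with $N = p-1$, the entire graph $\widetilde{\gG}$ is a single copy of $\gG(p) = A_{p-1} \sqcup A_{p-1}$, so each of its two components is $A_{p-1}$ with Coxeter number $p$. For $B_n$, I have $N = \operatorname{lcm}(2,3) = 6$, producing $6n$ total vertices. The $(n-1)$ edges labeled $3$ each contribute three copies of $\gG(3)$, and the unique edge labeled $4$ contributes two copies of $\gG(4)$. One can arrange the pairings so that each connected component forms either a straight path $A_{2n-1}$ or a path with a trivalent endpoint $D_{n+1}$; the three-vertex $A_3$-strands inside $\gG(4)$ are what allow the $D$-type branching to appear at the end. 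Both $D_{n+1}$ and $A_{2n-1}$ have Coxeter number $2n$, completing this case.

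The cases $H_3$ ($N = 4$), $H_4$ ($N = 4$), and $F_4$ ($N = 6$) follow the same template. A vertex count shows that the totals $12 = 2 \cdot 6$, $16 = 2 \cdot 8$, and $24 = 4 \cdot 6$ are consistent with components of type $D_6$, $E_8$, and $E_6$ respectively, which provides a useful consistency check. The explicit constructions of the foldings in these exceptional cases are carried out in Crisp \cite{crisp}, where the diagrams are tabulated; I would simply reproduce the relevant constructions. With the identifications in hand, Coxeter number matching is immediate: $h(D_6) = 10$, $h(E_8) = 30$, $h(E_6) = 12$, which agree with $h(H_3)$, $h(H_4)$, and $h(F_4)$ respectively.

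The main obstacle is the combinatorial verification of the exceptional cases, especially $H_4 \to E_8$, where the $16$ vertices of $\widetilde{\gG}$ must decompose into two copies of the $E_8$ diagram (a chain of seven with an eighth vertex branching from the third node) rather than some other tree on eight vertices. This reduces to a finite choice of partitions of $I(s_1), \ldots, I(s_4)$ together with bijections along the three edges; the single length-$5$ edge of $H_4$ gives two $A_4$ strands via $\gG(5)$, which form the backbone of each $E_8$ component, while the subsequent two label-$3$ edges must be aligned so as to attach exactly one $A_2$-arm at the correct position rather than two symmetric ones. Once the pairings are specified, each identification is a direct inspection of the resulting graph.
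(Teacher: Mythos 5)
The paper states this lemma without proof, deferring to Crisp's tabulations \cite{crisp} for the explicit small-type diagram identifications; your proposal---computing $N$, doing vertex-count consistency checks, matching Coxeter numbers via the standard formulas $h(A_m)=m+1$, $h(D_m)=2(m-1)$, $h(E_6)=12$, $h(E_8)=30$, and relying on Crisp for the case-by-case graph isomorphisms---is precisely the natural argument and is the same approach the paper implicitly uses. One small slip in the $B_n$ case: there are $n-2$ edges labeled $3$ (not $n-1$), since the diagram has $n-1$ edges total and one of them carries the label $4$, but this does not affect the argument.
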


Let $A_{\Gamma}$ be an Artin group  with no $\infty$-labels and connected $\gG$.
The $\RAAG$ with presentation~\ref{eq:raag} associated with $A_{\Gamma}$ is denoted by $\RA_{\Gamma}$, 
and the $\RAAG$ associated with $A_{\widetilde \Gamma}$ is denoted by $\RA_{\widetilde\Gamma}$. 
We denote the homomorphism $\RA_{\widetilde\Gamma}\to A_{\widetilde\Gamma}$ from the statement of the Generalized Tits Conjecture by $\widetilde\Phi_N$.
 
We consider the composition $\Psi \circ \Phi_N: RA_{\Gamma}  \rightarrow A_{\widetilde \gG}$. Our aim is to verify the Generalized Tits Conjecture for $A_{\Gamma}$, i.e.\ to show that $\Phi_N$ is injective for some $N$. 
Let $F: \RA_{\Gamma} \rightarrow \RA_{\widetilde\Gamma} $. If $T$ is an irreducible, special subset, then $\widetilde T$ will generally not be irreducible. 
For a generator $z_T$ of $\RA_{\Gamma}$ its image $F(z_T)$ is the product $\prod_{\widetilde T_i \subseteq \widetilde T} z_{\widetilde T_i}$
of generators in $\RA_{\widetilde \Gamma}$ corresponding to irreducible subsets $\widetilde T_i\subseteq \widetilde T$. 
For each $N$, we have the following equality $\widetilde \Phi_N \circ F = \Psi \circ \Phi_N$.

Crisp and Paris show the following lemma for the $\RAAG$ generated by $\{s^2:s\in S\}$. The same proof works for our $\RAAG$s. For the benefit of the reader, we provide the proof. 

\begin{lemma}\label{l:raaginj}
The homomorphism $F$ is injective. Therefore, if $\widetilde \Phi_N$ is injective, then $\Phi_N$ is injective.\end{lemma}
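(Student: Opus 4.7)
The second assertion is immediate from the factorization $\widetilde\Phi_N \circ F = \Psi \circ \Phi_N$: if $\widetilde\Phi_N$ and $F$ are both injective, then so is the composition $\widetilde\Phi_N \circ F$, hence $\Psi \circ \Phi_N$ is injective, which forces $\Phi_N$ to be injective.

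For injectivity of $F$, the plan is to apply Proposition~4.3 (Koberda's subgroup criterion) to the collection $\{F(z_T)\}_T$ inside $\RA_{\widetilde\Gamma}$. By Lemma~\ref{lemma:folding homomorphism}, $F(z_T) = \prod_{\widetilde T_i \subseteq \widetilde T} z_{\widetilde T_i}$ is a product of pairwise commuting generators of $\RA_{\widetilde\Gamma}$, since the irreducible components $\widetilde T_i$ of $\widetilde T$ are mutually orthogonal in $\widetilde\Gamma$. To define the representative map $p$ required by Property PP (Definition~\ref{def:PP}), we follow Crisp--Paris and first choose a section $\sigma: S \to \widetilde S$, $s \mapsto s^*$, of the projection $\widetilde\Gamma \to \Gamma$ satisfying the adjacency condition: whenever $s \sim t$ in $\Gamma$, we have $s^* \sim t^*$ in $\widetilde\Gamma$. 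Since the Coxeter diagram of any spherical Artin group is a tree, and since every vertex in $\widetilde\Gamma[I(s) \cup I(t)]$ has positive degree (as $\Gamma(m_{st})$ has no isolated vertices for $m_{st}\geq 3$), such a section is built by induction along $\Gamma$. Then for each irreducible spherical $T \subseteq S$, we let $p(T) \subseteq \widetilde T$ be the connected component of $\widetilde\Gamma[\widetilde T]$ containing the set $\{s^* : s \in T\}$; this is well-defined because the induced subgraph on those vertices is isomorphic to the connected graph $\Gamma[T]$.

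The remaining task is to verify that $p$ satisfies Property PP. Injectivity of $p$ is immediate: if $s \in T \setminus T'$, then $s^* \in p(T) \setminus \widetilde{T'} \subseteq p(T) \setminus p(T')$. For the full subcomplex condition, we show that if $z_T, z_{T'}$ do not commute in $\RA_\Gamma$, then $z_{p(T)}, z_{p(T')}$ do not commute in $\RA_{\widetilde\Gamma}$. Non-commutativity in $\RA_\Gamma$ provides vertices $t \in T$, $t' \in T'$ with $t \neq t'$ and $m_{tt'} \geq 3$, together with $T \not\subseteq T'$ and $T' \not\subseteq T$. We split into cases. If one of $t,t'$ lies in $T \cap T'$, say $t$, then $t^* \in p(T) \cap p(T')$; combined with the non-containments and the connectedness of the components, a path inside $p(T)$ from $t^*$ to some $s^* \in p(T) \setminus p(T')$ must contain an edge with one endpoint in $p(T')$ and the other in $p(T) \setminus p(T')$, supplying the required edge between $p(T)$ and $p(T')$. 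Otherwise $t \notin T'$ and $t' \notin T$, and the distinct vertices $t^* \in p(T)$, $t'^* \in p(T')$ are adjacent in $\widetilde\Gamma$ by construction of $\sigma$. In either case, $p(T)$ and $p(T')$ fail to commute in $\RA_{\widetilde\Gamma}$. Property PP is established, and Proposition~4.3 then gives injectivity of $F$.

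The main obstacle is the construction of the adjacency-preserving section $\sigma$. In the cases we care about, $A_\Gamma$ is spherical so $\Gamma$ is a tree and the inductive construction is direct; for more general connected $\Gamma$ one would need to choose the $s^*$ compatibly around all cycles, which requires additional care but is not needed for the applications in this paper.
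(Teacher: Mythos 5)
Your second assertion is identical to the paper's (the factorization $\widetilde\Phi_N \circ F = \Psi\circ\Phi_N$ together with injectivity of $F$), so no comment needed there.

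For the injectivity of $F$, your route is genuinely different from the paper's. The paper argues directly via the normal form for RAAGs: it observes that the generators $z_{\widetilde T_i}$ arising from distinct $T$ are distinct, and that if $z_T, z_U$ fail to commute in $\RA_\Gamma$ then for \emph{every} component $\widetilde T_i$ there is some component $\widetilde U_j$ with $z_{\widetilde T_i}$ and $z_{\widetilde U_j}$ non-commuting in $\RA_{\widetilde\Gamma}$; this is then enough to show that $F$ sends reduced words to reduced words. Crucially, this argument needs only the local fact that each vertex of $I(s)$ has a neighbor in $I(t)$ whenever $m_{st}\geq 3$, and it makes no global choices. Your approach instead routes through Property PP (Proposition~\ref{prop:koberda subgroup}): you build a global section $\sigma: s \mapsto s^*$ of $\widetilde\Gamma\to\Gamma$ that preserves adjacency, define $p(T)$ to be the component of $\widetilde T$ containing $\{s^*: s\in T\}$, and check Definition~\ref{def:PP}. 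This works, and your case analysis for the full-subcomplex condition is correct (though the path argument in the case $t\in T\cap T'$ is heavier than needed: since $t^*\in p(T)\cap p(T')$, you already have $[p(T),p(T')]\neq 1$, and you only need $s^*\in p(T)\setminus p(T')$ and its symmetric counterpart to rule out nesting). You should also be careful to write $z_{p(T)}$ rather than $p(T)$ when discussing commutation in $\RA_{\widetilde\Gamma}$.

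The cost of your approach is exactly what you flag at the end: the adjacency-preserving section $\sigma$ is constructed by induction along a spanning tree, and you need $\Gamma$ to \emph{be} a tree (no independent cycles) for the choices to be globally consistent; otherwise a cycle in $\Gamma$ can force incompatible choices of $s^*$ for some admissible constructions of $\widetilde\Gamma$. This covers every case the paper actually uses ($B_n, F_4, H_3, H_4, I_2(p)$ all have path diagrams), but the lemma is stated for arbitrary connected $\Gamma$ with no $\infty$-labels, and the paper's normal-form argument is valid at that level of generality with no extra hypotheses. So the trade-off is: your argument is more structured (it reuses the Property~PP machinery from Section~\ref{s:raags}, which fits the spirit of the paper) but is restricted to the tree case, while the paper's is shorter, more elementary, and unconditional.
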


\begin{proof}
 Note that if $T,U\subseteq S$ are distinct irreducible subsets, then $z_{\widetilde T^i} \ne z_{\widetilde U^j}$ for all $i$ and $j$. 
 Furthermore, if $z_T$ and $z_U$ do not commute in $\RA_{\Gamma}$, then for every $z_{\widetilde T^i}$ there exists a $z_{\widetilde U^i}$ that does not commute with $s_{\widetilde T^i}$ in $\RA_{\widetilde \Gamma}$.
To see this, note that if two spherical subsets $T$ and $U$ do not commute, there is $t \in T - U$ and $u \in U - T$ with $m_{tu} \ne 2$. Any component $\widetilde T^i$ of $\widetilde T$ contains a vertex $\widetilde t$ of $I(t)$. There exists a vertex $\widetilde u \in I(u)$ with $m_{\widetilde t \widetilde u} \ne 2$. Therefore, the component of $\widetilde U$ containing $\widetilde u$ does not commute with $\widetilde T_i$. 
By the normal form for $\RAAG$s this implies $F$ is injective, as it takes a reduced word in  $\RA_{\Gamma}$ to a reduced word in $\RA_{\widetilde \Gamma}$. 
\end{proof}

\subsection{Generalized Tits Conjecture for $B_n, F_4, H_3, H_4, I_2(p)$.}

We now finish the remaining spherical cases. By Lemma~\ref{l:cases} Artin groups of types $B_n$, $H_3$, and $I_2(p)$ embed via the folding homomorphisms in Artin groups of types $A_m$ and $D_m$, for which we already know that the conjecture holds by Proposition~\ref{prop:conjecture for An} and Theorem~\ref{thm:conjecture for Dn}. By Lemma~\ref{lemma:folding homomorphism} and Lemma~\ref{l:raaginj} we get the following.

\begin{corollary}
The Generalized Tits Conjecture holds for spherical Artin group of type $B_n$, $H_3$, and $I_2(p)$.
\end{corollary}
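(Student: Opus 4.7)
The plan is to combine the folding homomorphism $\Psi\colon A_\Gamma \to A_{\widetilde\Gamma}$ with the already-established Generalized Tits Conjecture for small-type spherical Artin groups of types $A_n$ and $D_n$ (Proposition~\ref{prop:conjecture for An} and Theorem~\ref{thm:conjecture for Dn}), together with the injectivity of $F\colon\RA_\Gamma\to\RA_{\widetilde\Gamma}$ provided by Lemma~\ref{l:raaginj}. For each of the three Coxeter types $B_n$, $H_3$, $I_2(p)$, Lemma~\ref{l:cases} identifies each connected component of $\widetilde\Gamma$ with a Coxeter diagram of type $A_m$ or $D_m$, so $A_{\widetilde\Gamma}$ is a direct product of small-type spherical Artin groups of these two families.

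First I would record the easy observation that the Generalized Tits Conjecture is preserved under direct products of Artin groups: if $A=A_1\times\cdots\times A_k$, then the associated $\RAAG$ splits as $\RA=\RA_1\times\cdots\times\RA_k$ (since an irreducible spherical subset of the product sits entirely in one factor, and generators coming from different factors commute as in~\eqref{eq:raag}), and under this splitting $\Phi_N$ factors as $\Phi_{N,1}\times\cdots\times\Phi_{N,k}$. Hence $\Phi_N$ is injective if and only if each $\Phi_{N,i}$ is. Applying this to $A_{\widetilde\Gamma}$, and using Proposition~\ref{prop:conjecture for An} and Theorem~\ref{thm:conjecture for Dn} on each factor, I conclude that there exists $N$ (which may be taken large and even, to cover the $D_n$ case) for which the associated homomorphism $\widetilde\Phi_N\colon\RA_{\widetilde\Gamma}\to A_{\widetilde\Gamma}$ is injective.

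The remainder of the argument is the application of the relation $\widetilde\Phi_N\circ F=\Psi\circ\Phi_N$ already noted in the text. By Lemma~\ref{l:raaginj} the map $F$ is injective, so $\widetilde\Phi_N\circ F$ is injective, and therefore $\Psi\circ\Phi_N$ is injective; this forces $\Phi_N$ to be injective, verifying the Generalized Tits Conjecture for $A_\Gamma$ with this value of $N$. I do not need to invoke injectivity of $\Psi$ separately, although that is available in the spherical case from Crisp's result.

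There is essentially no obstacle, since all the hard work has been done: the folding homomorphism (Lemma~\ref{lemma:folding homomorphism}) guarantees that the generators $\{z_T\}$ of $\RA_\Gamma$ are sent under $F$ to products of generators $z_{\widetilde T_i}$ of $\RA_{\widetilde\Gamma}$ in a way compatible with $\widetilde\Phi_N$ on $\Delta_T^{2N}$, and Lemma~\ref{l:cases} ensures the target Artin groups lie in the range for which we already know the conjecture. The only mild step is the observation about products above, and a book-keeping choice of $N$ (for instance, the even $N$ required by Theorem~\ref{thm:conjecture for Dn} in the $B_n$ and $H_3$ cases).
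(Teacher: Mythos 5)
Your proposal is correct and follows essentially the same route the paper takes: identify the components of $\widetilde\Gamma$ via Lemma~\ref{l:cases} as being of types $A_m$ or $D_m$, invoke Proposition~\ref{prop:conjecture for An} and Theorem~\ref{thm:conjecture for Dn} there, and pull the conclusion back through the folding homomorphism using Lemma~\ref{lemma:folding homomorphism} and Lemma~\ref{l:raaginj}. The only difference is that you spell out the (straightforward) compatibility of the conjecture with direct products and the choice of a single even $N$ across factors, which the paper leaves implicit.
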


In the case of $F_4$ and $H_4$, we do not know the conjecture for $E_6$ and $E_8$ respectively. 
However, in each of those cases, we can still show that the generalized Tits conjecture holds by considering 
$$\RA_{\Gamma}\to A_{\Gamma}\hookrightarrow A_{\widetilde \Gamma}\to \prod_{\Gamma^c} \Mod(\Sigma_{\Gamma^c})$$
 where $A_{\Gamma}$ is of type $F_4$ (respectively $H_4$), $\RA_{\Gamma}$ its associated $\RAAG$, and each component $\gG^c$ of $\widetilde \gG$ is of type $E_6$ (respectively $E_8$) with its Perron-Vannier representation in a mapping class group (see Section~\ref{sec:reps}). In both cases, we will only concentrate on one component of $\widetilde \gG$. An identical argument will work for all components, and so we get a faithful representation of $RA_\gG$ into a direct product of RAAG's.

\begin{Theorem}
The Generalized Tits Conjecture holds for the spherical Artin group of type $F_4$.
\end{Theorem}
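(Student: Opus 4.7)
The plan is to combine Crisp's folding homomorphism with the Perron-Vannier representation for Artin groups of type $E_6$, bypassing the fact that the Generalized Tits Conjecture for $E_6$ is not yet established. Let $M = \mathrm{lcm}\{m_{st}-1 : s\ne t\}$ in the construction of Section \ref{s:folding}, so the folding homomorphism $\Psi: A_{F_4}\hookrightarrow A_{\widetilde\Gamma}$ is defined and injective, with each connected component $\Gamma^c$ of $\widetilde\Gamma$ of type $E_6$ and the Coxeter number preserved ($h=12$) by Lemma \ref{l:cases}. By Lemma \ref{lemma:folding homomorphism}, for each irreducible spherical $T\subseteq S_{F_4}$ and any $N\ge 1$,
$$\Psi(\Delta_T^{2N}) = \prod_{\widetilde T^i \subset \widetilde T}\Delta_{\widetilde T^i}^{2N},$$
where the $\widetilde T^i$ are the irreducible components of $\widetilde T$, each an irreducible spherical subset of some $\Gamma^c$. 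Composing componentwise with the Perron-Vannier representation $\rho: A_{E_6}\to \Mod(\Sigma_{E_6})$ from Section \ref{sec:reps}, Lemma \ref{lem:generators multitwists} and Remark \ref{rem:deltas boundaries} show that each $\Delta_{\widetilde T^i}^{2N}$ maps to a multitwist about the boundary components of the subsurface $\Sigma_{\widetilde T^i}\subset \Sigma_{\Gamma^c}$.

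First, I would fix a single component $\Gamma^c$ and tabulate, for each of the ten irreducible spherical subsets $T$ of $S_{F_4}$, the collection of irreducible subsets $\widetilde T^i\cap \Gamma^c$ of $E_6$ together with their Perron-Vannier subsurfaces in $\Sigma_{E_6}$ (Figure \ref{fig:E6}). This is a finite bookkeeping task given the explicit structure of the folding and the small number of subsets involved.

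Second, I would verify Property PP in the sense of Lemma \ref{lem:PP for surfaces}, or the generalized version of Theorem \ref{thm:generalized PP} if needed, for the resulting collection of multicurves in $\Sigma_{\Gamma^c}$: for each generator $z_T$ select a distinguished boundary curve so that two distinguished curves intersect in $\Sigma_{\Gamma^c}$ precisely when $z_T$ and $z_{T'}$ do not commute in $\RA_{F_4}$. By Koberda's Theorem \ref{t:koberda} and Proposition \ref{prop:koberda subgroup}, this would imply that for $N$ sufficiently large, the subgroup of $\Mod(\Sigma_{\Gamma^c})$ generated by the resulting multitwists is the expected RAAG. An identical analysis in each remaining component then yields a faithful representation of $\RA_{F_4}$ into $\prod_c \Mod(\Sigma_{\Gamma^c})$, which factors through $A_{F_4}$, forcing $\Phi_N: \RA_{F_4}\to A_{F_4}$ to be injective. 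Consistency at the RAAG level is supplied by Lemma \ref{l:raaginj}, which guarantees that the purely combinatorial map $F:\RA_{F_4}\to \RA_{\widetilde\Gamma}$ is already injective.

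The main obstacle I anticipate is the explicit case-by-case verification of Property PP for this specific collection of multicurves, with particular attention to ruling out any analogue of the $E_7$ kernel element exhibited in Subsection \ref{ss:e7}. The reason for optimism is that the image of $\RA_{F_4}$ in $\RA_{\widetilde\Gamma}=\prod_c \RA_{\Gamma^c}$ under $F$ uses only a restricted subcollection of the generators of each $\RA_{\Gamma^c}$, so the problematic configurations of $E_7$-subsurfaces never arise in the folded picture, and the flexibility of distributing representative curves across several components of $\widetilde\Gamma$ should provide enough room to make consistent choices satisfying (generalized) Property PP.
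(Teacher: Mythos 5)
Your approach is exactly the one the paper takes for $F_4$: fold into a product of $E_6$-type Artin groups via Crisp's folding homomorphism, compose componentwise with the Perron--Vannier representation, and verify Property PP for the resulting system of boundary multicurves, relying on Lemma~\ref{lemma:folding homomorphism}, Lemma~\ref{l:cases}, Lemma~\ref{l:raaginj}, and Koberda's Theorem~\ref{t:koberda} as the structural scaffolding. Your intuition that the folded $F_4$-system uses a restricted and better-behaved subcollection of the $E_6$-boundary curves is confirmed in the paper: the Remark following Subsection~\ref{ss:e7} notes that the full $E_6$-system fails even the \emph{generalized} Property PP, but the folded $F_4$-subsystem turns out to satisfy \emph{ordinary} Property PP (Lemma~\ref{lem:PP for surfaces}), so Theorem~\ref{thm:generalized PP} is not needed here, unlike for type $D_n$.

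However, your proposal stops precisely where the substance lies. You leave the explicit choice of a representative boundary curve $p(T)$ for each irreducible spherical $T$, together with the verification of the required intersection pattern, as a ``finite bookkeeping task.'' That bookkeeping \emph{is} the proof, and it is not automatic that a valid choice exists. Concretely, in one component $\Gamma^c$ the folded images of $\{u\}$, $\{v\}$, $\{u,v\}$ (and of all of $S$) have unique boundary curves, but the subsets $\{s\}$, $\{t\}$, $\{s,t\}$, $\{t,u\}$, $\{s,t,u\}$, $\{t,u,v\}$ each produce multicurves with two components, forcing choices that must be compatible. The paper observes that exactly one curve of $\partial\Sigma_{\{t,u\}}$ meets $p(\{v\})$, that this curve meets both components of $\partial\Sigma_{\{s\}}$ and of $\partial\Sigma_{\{s,t\}}$, and that a consistent choice of $p(\{s\})$, $p(\{t\})$, $p(\{s,t\})$ then determines $p(\{t,u,v\})$, while $p(\{s,t,u\})$ is free (Figure~\ref{fig:F4} displays one valid selection). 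Until you produce such a concrete choice and check the pairwise intersections, what you have is a correct plan rather than a completed proof.
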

\begin{proof} Let $S=\{s, t,u,v\}$ be the standard generators of $A_{\gG}$ of type $F_4$ where $m_{st} = m_{uv} = 3$, $m_{tu} = 4$. Consider the homomorphism $\Psi_i:A_{\gG} \to A_{\widetilde \gG_i}$, where $A_{\widetilde \gG}$ has type $E_6$, and where $\Psi_i$ is the composition of the folding homomorphism $\Psi$ with the projection $A_{\widetilde\gG}\to A_{\widetilde \gG_i}$ where $\widetilde \gG_i$ is a connected component of $\widetilde \gG$. See Figure~\ref{fig:F4}.
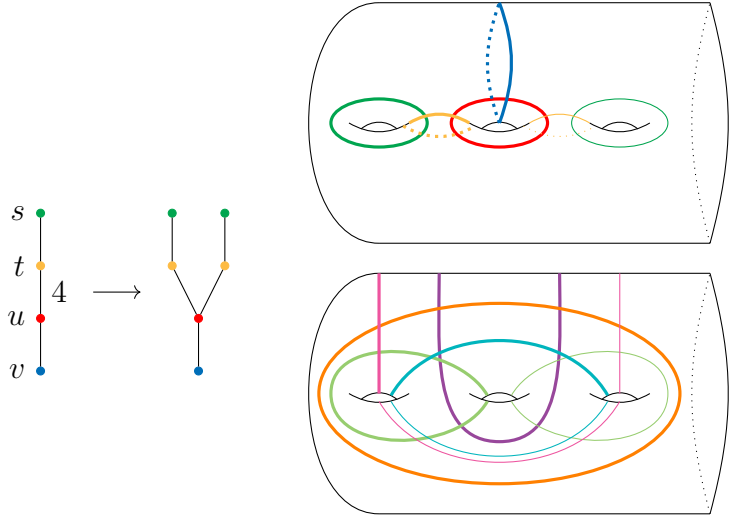
\begin{figure}
\centering
\begin{tikzpicture}
\begin{scope}[shift={(0,-1.5)}, scale = 0.7]
\node[circle, draw, fill, inner sep = 0pt,minimum width = 3pt, NavyBlue, label=left:$v$] (a0) at (0,0) {};
\node[circle, draw, fill, inner sep = 0pt,minimum width = 3pt, red,label=left:$u$] (a1) at (0,1) {};
\node[circle, draw, fill, inner sep = 0pt,minimum width = 3pt, Dandelion,label=left:$t$] (a2) at (0,2) {};
\node[circle, draw, fill, inner sep = 0pt,minimum width = 3pt,Green,label=left:$s$] (a3) at (0,3) {};
\draw (a0) -- (a1) -- (a2) node[draw=none,fill=none,midway,right] {$4$};
\draw (a2) -- (a3);

\draw[->] (1, 1.5) -- (1.9, 1.5);
\node[circle, draw, fill, inner sep = 0pt,minimum width = 3pt, NavyBlue] (b0) at (3,0) {};
\node[circle, draw, fill, inner sep = 0pt,minimum width = 3pt, red] (b1) at (3,1) {};
\node[circle, draw, fill, inner sep = 0pt,minimum width = 3pt,Dandelion] (b2) at (2.5,2) {};
\node[circle, draw, fill, inner sep = 0pt,minimum width = 3pt,Dandelion] (b2') at (3.5,2) {};
\node[circle, draw, fill, inner sep = 0pt,minimum width = 3pt,Green] (b3) at (2.5,3) {};
\node[circle, draw, fill, inner sep = 0pt,minimum width = 3pt,Green] (b3') at (3.5,3) {};
\draw (b0) -- (b1) -- (b2) -- (b3);
\draw (b1) -- (b2') -- (b3');
\end{scope}
\begin{scope}[shift={(2.5,1.8)}, scale=0.8]
\draw (2.5,-2) to[out=180, in=180] (2.5,2);
\draw (2.5,-2) to (8, -2);
\draw (2.5,2) to (8, 2);
\draw[dotted] (8,-2) to[out=105, in=255]  (8, 2);
\draw (8,2) to[out=285, in=75] (8, -2);
\draw (2, -0) to[out=-30, in=210] (3, 0);
\draw (2.2,-0.1) to[out=40, in=140] (2.8, -0.1);

\draw (4, -0) to[out=-30, in=210] (5, 0);
\draw (4.2,-0.1) to[out=40, in=140] (4.8, -0.1);

\draw (6, -0) to[out=-30, in=210] (7, 0);
\draw (6.2,-0.1) to[out=40, in=140] (6.8, -0.1);

\draw[Green, very thick] (2.5, 0) ellipse (0.8 and 0.4);

\draw[red, very thick] (4.5, 0) ellipse (0.8 and 0.4);

\draw[Green] (6.5, 0) ellipse (0.8 and 0.4);

\draw[Dandelion, very thick] (3,0) to[out=30, in=150] (4,0);
\draw[Dandelion, dotted, very thick] (2.9,-0.05) to[out=330, in=210] (4.1,-0.05);

\draw[Dandelion] (5,0) to[out=30, in=150] (6,0);
\draw[Dandelion, dotted] (4.9,-0.05) to[out=330, in=210] (6.1,-0.05);

\draw[NavyBlue, very thick] (4.5, 0) to[out=70 , in=290] (4.5, 2);
\draw[NavyBlue, dotted, very thick] (4.5, 0) to[out=110 , in=250] (4.5, 2);

\end{scope}

\begin{scope}[shift={(2.5,-1.8)}, scale=0.8]
\draw (2.5,-2) to[out=180, in=180] (2.5,2);
\draw (2.5,-2) to (8, -2);
\draw (2.5,2) to (8, 2);
\draw[dotted] (8,-2) to[out=105, in=255]  (8, 2);
\draw (8,2) to[out=285, in=75] (8, -2);
\draw (2, -0) to[out=-30, in=210] (3, 0);
\draw (2.2,-0.1) to[out=40, in=140] (2.8, -0.1);

\draw (4, -0) to[out=-30, in=210] (5, 0);
\draw (4.2,-0.1) to[out=40, in=140] (4.8, -0.1);

\draw (6, -0) to[out=-30, in=210] (7, 0);
\draw (6.2,-0.1) to[out=40, in=140] (6.8, -0.1);

\draw[Purple, very thick] (3.5, 2) to[out=270 , in=180] (4.5, -0.8) to[out=0 , in=270] (5.5, 2);

\draw[YellowGreen, very thick] (4.3, -0.13) to[out=240 , in=270]  (1.7, 0) to[out=90 , in=120]  (4.3, -0.03) ;
\draw[YellowGreen] (4.7, -0.13) to[out=300 , in=270]  (7.3, 0) to[out=90 , in=60]  (4.7, -0.03) ;

\draw[Aquamarine] (2.7, -0.13) to[out=300 , in=240]  (6.3, -0.13);
\draw[Aquamarine, very thick] (2.7, -0.03) to[out=60 , in=120]  (6.3, -0.03);

\draw[orange, very thick] (4.5,0) ellipse (3 and 1.5);

\draw[Rhodamine] (2.5, -0.13) to[out=300 , in=240]  (6.5, -0.13);
\draw[Rhodamine, very thick] (2.5, 0) to
(2.5, 2);
\draw[Rhodamine] (6.5, 0) to
(6.5, 2);

\end{scope}
\end{tikzpicture}
\caption{The restriction of the folding homomorphism for $A_{\Gamma}$ of type $F_4$ to one connected component. The generators of $A_{\Gamma}$ are mapped to the Dehn twists and multitwists in the left surface. The right surface has all the multicurves that arise as subsurfaces corresponding to irreducible special subgroups of $A_{\Gamma}$. Thick curves represent an example of a choice of curves satisfying Property PP.}
\label{fig:F4}
\end{figure}
We also consider the Perron-Vannier representation $\rho:A_{\widetilde \Gamma_i}\to \Mod(\Sigma)$, as discussed in Section~\ref{sec:reps}. The images of the elements $\Delta_{T}^4$ for irreducible subsets $T\subseteq S$ are powers of Dehn twists around curves in $\Sigma$, pictured in Figure~\ref{fig:F4}. Let $\mathcal C$ denote the collection of all these curves. 
Again, by Theorem~\ref{t:koberda} sufficiently large powers $k$ of Dehn twists around single curves in $\mathcal C$ generate a RAAG. 
By Lemma~\ref{lem:PP for surfaces} to show that $A$ satisfies the Generalized Tits Conjecture, we need to make a choice $p(T)$ of a curve in $\partial \Sigma_T$ for each of the irreducible subsets $T$ of $S$. For $T=\{u\}$ or $\{v\}$, the choice is unique. It remains to make choices for $\{s\},\{ t\}, \{s,t\}, \{t,u\}, \{s,t,u\}, \{t,u,v\}$. There is a unique curve in $\Sigma_{t,u}$ that intersects the curve $p(\{v\})$, we choose that curve for $p(\{t,u\})$. Note that that curve intersects both boundary components of $\Sigma_{\{s\}}$, both boundary components of $\Sigma_{\{s,t\}}$ and the unique boundary component of $\Sigma_{\{u,v\}}$.
For each choice of a curve in $\partial \Sigma_{\{s\}}$, there is a unique intersecting curve in $\partial \Sigma_{\{t\}}$, and there is a unique curve in $\partial \Sigma_{\{s,t\}}$ that is the boundary of a subsurface containing those two curves. We make such a consistent choice of $p(\{s\}), p(\{t\}), p(\{s,t\})$. Then there is a unique choice of $p(\{t,u,v\})$ that intersects $p(\{s\})$ and both components of $\Sigma_{\{s,t,u\}}$. Finally, any choice of $p(\{s,t,u\})$ works. 
The only multicurves which share a curve correspond to the collections $\{t,u\}$ and $\{t,u,v\}$, and we never choose this curve (the thin pink and teal curve in Figure \ref{fig:F4}), therefore the third condition of property PP is satisfied. 
\end{proof}

\begin{Theorem}
The Generalized Tits Conjecture holds for the spherical Artin group of type $H_4$.
\end{Theorem}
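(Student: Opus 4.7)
The plan is to mimic the $F_4$ argument, replacing $E_6$ with $E_8$. Let $A_\gG$ be of type $H_4$ with standard generators $S=\{s,t,u,v\}$ where $m_{st}=5$, $m_{tu}=m_{uv}=3$, and the other $m_{\ast\ast}=2$. By Lemma \ref{l:cases}(4), each connected component $\widetilde\gG_i$ of the folding $\widetilde\gG$ is of type $E_8$, and the Coxeter number is preserved. Compose the folding homomorphism with the projection onto a component and with the Perron-Vannier representation of $A_{E_8}$ to obtain
\[
\RA_\gG \xrightarrow{\Phi_N} A_\gG \xrightarrow{\Psi_i} A_{\widetilde\gG_i} \xrightarrow{\rho} \Mod(\Sigma),
\]
where $\Sigma$ has genus $4$ and one boundary component. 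By Lemma \ref{l:raaginj} it suffices to show this composition is injective for $N$ sufficiently large, and by Lemma \ref{lem:PP for surfaces} combined with Theorem \ref{t:koberda} this reduces to verifying Property PP for the collection of multicurves $\partial \Sigma_{\widetilde T}$, where $T$ ranges over the irreducible subsets of $S$ and $\widetilde T$ is the preimage of $T$ restricted to $\widetilde\gG_i$.

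By Lemma \ref{lemma:folding homomorphism}, each $z_T$ maps to a multitwist about $\partial \Sigma_{\widetilde T}$, which decomposes as a disjoint union over the irreducible components of $\widetilde T$. One then identifies these multicurves explicitly inside $\Sigma$ using the $E_8$ Perron-Vannier picture of Figure \ref{fig:E6}, just as was done pictorially in Figure \ref{fig:F4} for $F_4$. There are ten irreducible subsets of $S$: the four singletons, the three pairs $\{s,t\}, \{t,u\}, \{u,v\}$, the two triples $\{s,t,u\}, \{t,u,v\}$, and the full set $S$; for each, one must exhibit a representative curve $p(T)\in \partial\Sigma_{\widetilde T}$ such that $p(T)$ and $p(T')$ intersect in $\Sigma$ if and only if $z_T$ and $z_{T'}$ do not commute in $\RA_\gG$.

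I would carry out the case analysis in the same order as the $F_4$ proof. First, fix the forced choices for $p(\{u\})$ and $p(\{v\})$. Then take $p(\{t,u\})$ and $p(\{u,v\})$ to be the unique curves meeting $p(\{v\})$ and $p(\{u\})$ respectively. Next, use the fact that the edge $\{s,t\}$ with label $5$ is folded along a single copy of $\gG(5)=A_4\sqcup A_4$: in each of the two $A_4$-strands inside $\widetilde\gG_i$ one gets a distinguished $\{\widetilde s,\widetilde t\}$-subdiagram, giving compatible pairs of candidate curves among which one selects $p(\{s\}), p(\{t\}), p(\{s,t\})$ using the same local consistency principle as in $F_4$. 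Finally, $p(\{s,t,u\})$ and $p(\{t,u,v\})$ are determined (up to harmless choice) by the requirement of intersecting the previously chosen curves in the correct pattern, and any boundary component works for $p(S)$.

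The main obstacle will be the combinatorial bookkeeping for the $\gG(5)$ fold: unlike the $\gG(3)$-folds appearing in $F_4$, here $\{s,t\}$ produces two parallel $A_4$-strands on a single connected component, so the curves realizing $p(\{s\}),p(\{t\}),p(\{s,t\})$ and those realizing $p(\{s,t,u\}),p(\{t,u,v\})$ live on interlocking subsurfaces that share boundary components. The verification that a globally consistent choice exists is a finite and explicit check on the $E_8$ surface, and once completed, Lemma \ref{lem:PP for surfaces}, Theorem \ref{t:koberda}, and Lemma \ref{l:raaginj} conclude the argument.
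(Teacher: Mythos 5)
Your proposal takes essentially the same route as the paper: fold $A_{H_4}$ into $A_{E_8}$ (Lemma \ref{l:cases}), pass through the Perron--Vannier representation, apply Koberda's theorem, and verify Property PP by making explicit choices of boundary curves for the irreducible subsets $\{s\},\{t\},\{s,t\},\{t,u\},\{s,t,u\},\{t,u,v\}$ (the remaining four --- $\{u\},\{v\},\{u,v\},S$ --- having a unique boundary component, so no choice to make). The paper's proof is precisely this, with the curve choices made by hand against Figure~\ref{fig:H4}.

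One small inaccuracy in your discussion of the ``main obstacle'': the two $A_4$-strands produced by the $\gG(5)$-fold along the edge $\{s,t\}$ do \emph{not} both lie in a single connected component of $\widetilde\gG$. Each $A_4$ component of $\gG(5)$ alternates between vertices of $I(s)$ and $I(t)$, and a single $E_8$ component $\widetilde\gG_i$ receives exactly one of these $A_4$ strands (as visible in Figure~\ref{fig:H4}, where the induced subgraph on the two $s$-colored and two $t$-colored vertices is a single path of type $A_4$, not $A_4\sqcup A_4$); the other strand lives in the other $E_8$ component. So within one component the $\{s,t\}$-picture is genuinely analogous to the small-type $A_4$ picture, and there is no interlocking-strand complication. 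This does not affect the viability of your plan --- if anything it makes the finite check easier than you feared --- but it is worth getting the fold geometry right before attempting the explicit curve bookkeeping.
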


\begin{proof}
Let $S=\{s, t,u,v\}$ be the standard generators of $A_{\Gamma}$ of type $H_4$ where $m_{st} = 5$ and $m_{tu} = m_{uv} = 3$. Consider the restriction of a folding homomorphism to a single component of Coxeter diagram. We get a homomorphism $\Phi:A_{H_4} \to A_{E_8}$ as pictured in the left of Figure~\ref{fig:H4}. We also consider the Perron-Vannier representation $A_{E_6}\to \Mod(\Sigma)$, as discussed in Section~\ref{sec:reps}. The images of the squares of the original generators, and of the elements $\Delta_{T}^2$ for other irreducible subsets $T\subseteq S$, are Dehn twists around curves in two copies of $\Sigma$ in Figure~\ref{fig:F4}. Let $\mathcal C$ denote the collection of all these curves. As in previously considered cases, by Theorem~\ref{t:koberda} sufficiently large powers $k$ of Dehn twists around single curves in $\mathcal C$ generate a RAAG. We need to verify that Property PP is satisfied for $\{\Phi(\Delta_T^{2k})\}$. Specifically, we need to make choices for the following irreducible subsets of $S$: $\{s\},\{ t\}, \{s,t\}, \{t,u\}, \{s,t,u\}, \{t,u,v\}$. There is a unique curve in $\Sigma_{t,u}$ that intersects the curve corresponding to $v$, we choose that curve for $p(\{t,u\})$. Note that that curve intersects both boundary components of $\Sigma_{\{s\}}$, both boundary components of $\Sigma_{\{s,t\}}$ and the unique boundary component of $\Sigma_{\{u,v\}}$.
For each choice of a curve in $\partial \Sigma_{\{s\}}$, there is a unique intersecting curve in $\partial \Sigma_{\{t\}}$, and there is a unique curve in $\partial \Sigma_{\{s,t\}}$ that is the composition of those two curves. We make such a consistent choice of $p(\{s\}), p(\{t\}), p(\{s,t\})$. That also forces a choice of $p(\{t,u,v\})$ and $p(\{s,t,u\})$. Since no multicurves share a curve, the third condition of property PP is trivially satisfied. 
\end{proof}

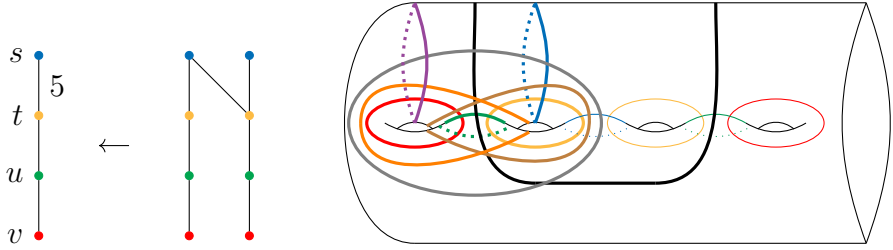
\begin{figure}
\centering
\begin{tikzpicture}
\begin{scope}[shift={(0,-1.5)}, scale = 0.8]
\node[circle, draw, fill, inner sep = 0pt,minimum width = 3pt, red, label=left:$v$] (a0) at (0,0) {};
\node[circle, draw, fill, inner sep = 0pt,minimum width = 3pt, Green,label=left:$u$] (a1) at (0,1) {};
\node[circle, draw, fill, inner sep = 0pt,minimum width = 3pt, Dandelion,label=left:$t$] (a2) at (0,2) {};
\node[circle, draw, fill, inner sep = 0pt,minimum width = 3pt,NavyBlue,label=left:$s$] (a3) at (0,3) {};
\draw (a0) -- (a1) -- (a2);
\draw (a2) -- (a3) node[draw=none,fill=none,midway,right] {$5$};

\draw[<-] (1, 1.5) -- (1.5, 1.5);
\node[circle, draw, fill, inner sep = 0pt,minimum width = 3pt, red] (b0) at (2.5,0) {};
\node[circle, draw, fill, inner sep = 0pt,minimum width = 3pt, red] (b0') at (3.5,0) {};
\node[circle, draw, fill, inner sep = 0pt,minimum width = 3pt, Green] (b1) at (2.5,1) {};
\node[circle, draw, fill, inner sep = 0pt,minimum width = 3pt, Green] (b1') at (3.5,1) {};
\node[circle, draw, fill, inner sep = 0pt,minimum width = 3pt,Dandelion] (b2) at (2.5,2) {};
\node[circle, draw, fill, inner sep = 0pt,minimum width = 3pt,Dandelion] (b2') at (3.5,2) {};
\node[circle, draw, fill, inner sep = 0pt,minimum width = 3pt,NavyBlue] (b3) at (2.5,3) {};
\node[circle, draw, fill, inner sep = 0pt,minimum width = 3pt,NavyBlue] (b3') at (3.5,3) {};
\draw (b0) -- (b1) -- (b2) -- (b3) -- (b2') -- (b3');
\draw (b0') -- (b1') -- (b2');
\end{scope}

\begin{scope}[shift={(3,0)}, scale = .8]

\draw (2.5,-2) to[out=180, in=180] (2.5,2);
\draw (2.5,-2) to (10, -2);
\draw (2.5,2) to (10, 2);
\draw (10,-2) to[out = 110, in = 250] (10, 2);
\draw (10,-2) to[out = 70, in = -70] (10, 2);
\draw (2, -0) to[out=-30, in=210] (3, 0);
\draw (2.2,-0.1) to[out=40, in=140] (2.8, -0.1);

\draw (4, -0) to[out=-30, in=210] (5, 0);
\draw (4.2,-0.1) to[out=40, in=140] (4.8, -0.1);

\draw (6, -0) to[out=-30, in=210] (7, 0);
\draw (6.2,-0.1) to[out=40, in=140] (6.8, -0.1);

\draw (8, -0) to[out=-30, in=210] (9, 0);
\draw (8.2,-0.1) to[out=40, in=140] (8.8, -0.1);

\draw[red, very thick] (2.5, 0) ellipse (0.8 and 0.4);

\draw[Dandelion, very thick] (4.5, 0) ellipse (0.8 and 0.4);

\draw[Dandelion] (6.5, 0) ellipse (0.8 and 0.4);

\draw[red] (8.5, 0) ellipse (0.8 and 0.4);

\draw[ForestGreen, very thick] (3,0) to[out=30, in=150] (4,0);
\draw[ForestGreen, dotted, very thick] (2.9,-0.05) to[out=330, in=210] (4.1,-0.05);

\draw[NavyBlue] (5,0) to[out=30, in=150] (6,0);
\draw[NavyBlue, dotted] (4.9,-0.05) to[out=330, in=210] (6.1,-0.05);

\draw[ForestGreen] (7,0) to[out=30, in=150] (8,0);
\draw[ForestGreen, dotted] (6.9,-0.05) to[out=330, in=210] (8.1,-0.05);

\draw[NavyBlue, very thick] (4.5, 0) to[out=70 , in=290] (4.5, 2);
\draw[NavyBlue, dotted, very thick] (4.5, 0) to[out=110 , in=250] (4.5, 2);

\draw[black, very thick] (3.5, 2) to[out=-90 , in=180] (4.5, -1);
\draw[black, very thick] (7.5, 2) to[out=-90 , in=0] (6.5, -1);
\draw[black, very thick] (4.5, -1) to (6.5, -1);

\draw[brown, very thick] (2.7, -.05) to[out=30 , in=90] (5.4, 0.05);

\draw[brown, very thick] (2.7, -.125) to[out=-30 , in=270] (5.4, .05);

\draw[orange, very thick] (4.4, 0) to[out=150 , in=90] (1.6, 0);

\draw[orange, very thick] (4.4, -.15) to[out=-150 , in=-90] (1.6, 0);

\draw[gray, very thick] (3.5,0) ellipse (2.1 and 1.2);

\draw[Purple, very thick] (2.5, 0) to[out=70 , in=290] (2.5, 2);
\draw[Purple, dotted, very thick] (2.5, 0) to[out=110 , in=250] (2.5, 2);
\end{scope}
\end{tikzpicture}
\caption{The restriction of folding homomorphism for $A_{\Gamma}$ of type $E_8$ to one connected component. The thick curves are the ones we choose for the standard generators in verifying Property PP. The gray and purple curves are the curves we choose for $\{t,u,v\}$ and $\{s, t,u\}$ respectively. The black, brown, and orange curves are the unique curves for the irreducible rank $2$ subsets.}
\label{fig:H4}
\end{figure}

\section{Applications}\label{s:applications}

If the Generalized Tits Conjecture holds in full generality, one immediate application is that the subgroups of Artin groups are as complicated as subgroups of right-angled Artin groups. The latter are currently more well understood. In this section, we give a few applications of this idea. 

\subsection{Incoherence}

Recall that a group $G$ is \emph{coherent} if every finitely generated subgroup of $G$ is finitely presented. Droms had showed that the right-angled Artin group $\RA_L$ was coherent if and only if $L$ was a chordal graph \cite{droms}. Gordon showed that if the Artin group of type $H_3$ was incoherent, there was a similar classification of coherent Artin groups. Wise answered this in the affirmative in 2013.

\begin{Theorem}[\cite{wise}]
The Artin group of type $H_3$ is incoherent.
\end{Theorem}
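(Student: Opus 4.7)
The plan is to use the Generalized Tits Conjecture (Theorem~\ref{t:main}) to exhibit an incoherent right-angled Artin subgroup inside $A_{H_3}$, and then apply a classical theorem of Droms.

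First I would identify the nerve $L_\oslash$ of the RAAG $\RA$ guaranteed by Theorem~\ref{t:main}. With standard generators $S=\{s_1,s_2,s_3\}$ and $m_{12}=5$, $m_{23}=3$, $m_{13}=2$, the irreducible spherical subsets of $S$ are the three singletons, the two edges $\{s_1,s_2\}$ and $\{s_2,s_3\}$ (note that $\{s_1,s_3\}$ is reducible and contributes no vertex), and $S$ itself, giving six vertices. A direct check of the relations \eqref{eq:raag} shows that the vertex labelled $S$ is adjacent to all five other vertices (it contains each of them), while the only other adjacencies are the five edges
$$\{s_1,s_2\} - \{s_1\} - \{s_3\} - \{s_2,s_3\} - \{s_2\} - \{s_1,s_2\}.$$
Thus $L_\oslash$ is the cone on a pentagon, with the cone vertex corresponding to $S$.

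Next, since $H_3$ is spherical and not of type $E_n$, Theorem~\ref{t:main} supplies an injection $\Phi_N:\RA\hookrightarrow A_{H_3}$ for some $N$. The pentagon $C_5$ sits in $L_\oslash$ as a full subcomplex (the link of the cone vertex), so the subgroup generated by the corresponding five generators is isomorphic to $\RA_{C_5}$, the RAAG on a 5-cycle. Composing inclusions gives $\RA_{C_5}\hookrightarrow A_{H_3}$.

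Finally I would invoke Droms' theorem \cite{droms}, which states that a RAAG is coherent if and only if its defining graph is chordal. The 5-cycle $C_5$ has no chord, so $\RA_{C_5}$ is incoherent, i.e.\ contains a finitely generated subgroup that is not finitely presented. As incoherence is inherited by overgroups, $A_{H_3}$ is itself incoherent.

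The argument is short once Theorem~\ref{t:main} is in hand; the only non-trivial step is the nerve computation, but this is a finite check. The real obstacle was verifying the Generalized Tits Conjecture for $H_3$ in the first place, which was carried out via the folding homomorphism into $A_{D_6}$ earlier in the paper.
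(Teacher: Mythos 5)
Your argument is correct and takes essentially the same approach as the paper: both identify $L_\oslash$ as the cone on a pentagon via the Generalized Tits Conjecture and then observe that the RAAG on a pentagon is incoherent. The only cosmetic difference is that the paper witnesses incoherence directly via the Bestvina--Brady subgroup of the pentagon RAAG, whereas you invoke Droms' chordality criterion (which the paper also cites in the surrounding discussion), so the two arguments are interchangeable.
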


Since $A$ of type $H_3$ satisfies the generalized Tits conjecture, we can give an alternative proof of Wise's theorem. In this case, the nerve $L_{\oslash}$ of the $\RAAG$ subgroup $\RA$ in $A$ is the cone on a pentagon. The $\RAAG$ based on a pentagon is well known to be incoherent, for example the Bestvina-Brady subgroup of the $\RAAG$ (the kernel of the map to $\zz$ which sends every generator to $1$) is finitely generated and not finitely presented.

\subsection{Hyperbolic surface subgroups}

In \cite{glr}, Gordon, Long, and Reid studied which Coxeter groups and Artin groups contained hyperbolic surface subgroups. They showed that all finite type Artin groups except types $A_1, I_2(m)$, and $H_3$ contained these subgroups. The first two classes do not contain such subgroups (More generally, any Artin group where the nerve $L$ is a tree does not contain a hyperbolic surface subgroup), and type $H_3$ was left as an open question. It follows from the Generalized Tits Conjecture that the answer to their question is yes.

\begin{Theorem}
Every Artin group with $s,t,u\in S$ so that $m_{st}, m_{tu}, m_{us}<\infty$ and at most one of them equals $2$, contains a surface subgroup.
\end{Theorem}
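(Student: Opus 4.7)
The plan is to restrict to the three-generator special subgroup $A_T$ with $T=\{s,t,u\}$, invoke the Generalized Tits Conjecture there, and extract a closed hyperbolic surface subgroup from an induced cycle in the nerve of the resulting $\RAAG$.

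First, I would verify that $A_T$ satisfies the Generalized Tits Conjecture. When $T$ is not spherical, $A_T$ is locally reducible (its nerve has dimension at most one), so Theorem~\ref{t:main3} applies. When $T$ is spherical, the hypothesis ``at most one of $m_{st}, m_{tu}, m_{us}$ equals $2$'' together with finiteness of all three labels forces the Coxeter diagram on $T$ to be a path yielding type $A_3$, $B_3$, or $H_3$, all of which are covered by Theorem~\ref{t:main}.

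Next, I would compute the nerve $L_\oslash$ of the associated $\RAAG$ $\RA = \RA_T$ and locate an induced cycle of length at least five. If none of $m_{st}, m_{tu}, m_{us}$ equals $2$, then $W_T$ is infinite (since the triangular Coxeter diagram with labels $\geq 3$ satisfies $\sum 1/m_{\cdot\cdot}\leq 1$), so $T$ is not spherical; in this case $L_\oslash$ has six vertices---the three singletons and the three irreducible pairs---whose only adjacencies are between each singleton and the two pairs containing it, assembling into an induced hexagon. If exactly one label is $2$, say $m_{su}=2$, then $\{s,u\}$ is reducible and not a vertex of $L_\oslash$; a routine check shows the remaining five vertices $\{s\},\{t\},\{u\},\{s,t\},\{t,u\}$ form an induced pentagon (with the commuting edge $\{s\}\sim\{u\}$ closing it), and the potential additional vertex $\{s,t,u\}$ (when spherical) is adjacent to all five, leaving this pentagon full in $L_\oslash$.

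Finally, since full subcomplexes of $L_\oslash$ yield embedded $\RAAG$ subgroups of $\RA$, and $\RAAG$s on induced cycles of length at least five are known to contain closed hyperbolic surface subgroups, this provides the desired subgroup of $A$.

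The main obstacle is the last step in the hexagon case: the pentagon statement is classical, but the hexagon version is less widely cited. It can be handled either by a direct construction of a local isometry from a non-positively curved closed surface into the Salvetti complex of the $\RAAG$ on $C_6$, or by producing an embedding of the pentagon $\RAAG$ into the hexagon $\RAAG$ through known results on $\RAAG$-in-$\RAAG$ subgroups.
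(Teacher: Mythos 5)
Your proposal is correct and follows essentially the same route as the paper: restrict to $A_T$ with $T=\{s,t,u\}$, use the Generalized Tits Conjecture (locally reducible case when $T$ is not spherical; types $A_3$, $B_3$, $H_3$ when it is), and read a hyperbolic surface subgroup off the pentagon or hexagon induced in $L_\oslash$. The only divergence is in the final step, and your worry about the hexagon case is unfounded. The classical result of Servatius, Droms, and Servatius already gives a closed hyperbolic surface subgroup in the RAAG on $C_n$ for every $n \ge 5$, with no separate treatment needed for $n=6$; the paper instead invokes the Davis--Januszkiewicz commensurability of $\RA_{C_n}$ with a right-angled Coxeter group containing $W_{C_n}$ as a special subgroup, which is the reflection group of a right-angled hyperbolic $n$-gon. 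Either route closes the argument, and you do not need the fallback of embedding the pentagon RAAG into the hexagon RAAG (which is in fact a delicate question and should not be asserted without proof); the direct citation suffices.
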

\begin{proof}
The subgroup $A_{\{s,t,u\}}$ satisfies the Generalized Tits Conjecture and the nerve of $\RA_{\{s,t,u\}}$ is a pentagon, hexagon or a cone on a pentagon. The $\RAAG$ based on a pentagon or a hexagon is commensurable to a right-angled Coxeter group which has a pentagon or a hexagon as a full subcomplex \cite{dj00}. It follows that the Coxeter group contains a hyperbolic surface subgroup, hence so does the $\RAAG$, and hence so does the Artin group. 
\end{proof}

\begin{corollary}
The Artin group of type $H_3$ contains a hyperbolic surface subgroup.
\end{corollary}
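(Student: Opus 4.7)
The plan is to apply the preceding theorem directly. The Coxeter diagram of $H_3$ consists of three vertices $s, t, u$ arranged in a path, with edge labels $m_{st} = 3$ and $m_{tu} = 5$, while the non-adjacent pair satisfies $m_{su} = 2$. All three of $m_{st}$, $m_{tu}$, $m_{us}$ are finite, and exactly one of them (namely $m_{su}$) equals $2$. This means the hypothesis of the theorem is satisfied with this choice of triple, and we conclude that $A_{H_3}$ contains a hyperbolic surface subgroup.

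It is worth tracing through what the surface subgroup actually comes from. Since $\{s,t,u\}$ is itself an irreducible spherical subset (indeed, the whole generating set of $H_3$), one can compute, as in the construction of $L_\oslash$ in Section~\ref{sec:subdivision}, that the nerve of the associated $\RAAG$ $\mathrm{RA}$ is the cone on a pentagon: the five pentagon vertices correspond to the irreducible spherical subsets $\{s\}, \{s,t\}, \{t\}, \{t,u\}, \{u\}$ (with $\{s\}$ and $\{u\}$ joined by an edge coming from the relation $[\{s\},\{u\}]=1$ since $m_{su}=2$), and the cone vertex corresponds to $\{s,t,u\}$. The $\RAAG$ on a cone on a pentagon splits as $\zz \times \mathrm{RA}_{P_5}$, where $\mathrm{RA}_{P_5}$ is the pentagon $\RAAG$, and $\mathrm{RA}_{P_5}$ is well known to contain a closed hyperbolic surface subgroup via commensurability with the right-angled Coxeter group on the pentagon.

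The main (and only nontrivial) input required is therefore the Generalized Tits Conjecture for $A_{H_3}$, which has already been established in Section~\ref{s:folding} via the folding homomorphism $A_{H_3}\hookrightarrow A_{D_6}$ combined with the Perron--Vannier representation. Once this injection of $\mathrm{RA}$ into $A_{H_3}$ is in hand, the surface subgroup of $\mathrm{RA}_{P_5}\le \mathrm{RA}$ maps injectively into $A_{H_3}$, proving the corollary.
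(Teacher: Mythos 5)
Your proof is correct and takes essentially the same route as the paper: it invokes the preceding theorem, checks that $H_3$ satisfies its hypotheses, and (implicitly or explicitly) uses the fact that the Generalized Tits Conjecture has been established for $A_{H_3}$ via the folding embedding into $A_{D_6}$. Your additional tracing through the cone-on-a-pentagon nerve and the splitting $\mathbb{Z} \times \mathrm{RA}_{P_5}$ is an accurate unwinding of the paper's cited proof and matches what the paper itself notes in the incoherence discussion immediately before.
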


Gordon-Long-Reid also considered Artin groups of Euclidean type, and asked specifically if the Artin group of type $\tilde B_2$ or type $\tilde G_2$ contain a hyperbolic surface subgroup. 
The nerves of these Artin groups are triangles with labels $(2,4,4)$ and $(2,3,6)$ respectively. In unpublished work, Sang-hyun Kim can show the existence of such a subgroup in the $\tilde B_2$ case, but we believe the $\tilde G_2$ case was still open. 
In either case, these groups are locally reducible. Therefore, the answer to this question is also yes, and follows as above from the Generalized Tits Conjecture. 
\begin{corollary} The Artin groups of types $\tilde B_2$ and $\tilde G_2$ contain hyperbolic surface subgroups.
\end{corollary}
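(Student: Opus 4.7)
The plan is to deduce this directly from the preceding theorem, once I verify that both Artin groups fall under its hypothesis and that the Generalized Tits Conjecture is available for them. First I note that the Coxeter diagrams of $\tilde B_2$ and $\tilde G_2$ are triangles with edge labels $(2,4,4)$ and $(2,3,6)$ respectively; each has three generators $s,t,u$ with $m_{st}$, $m_{tu}$, $m_{us}$ all finite and exactly one equal to $2$. Since $\{s,t,u\}$ is not spherical in either case (the associated Coxeter group being Euclidean, hence infinite), these are $2$-dimensional, and therefore locally reducible, Artin groups. Theorem \ref{t:main3} then supplies the Generalized Tits Conjecture for each of them with $N=2$, which is the input that the preceding theorem's argument requires.

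Next, I would explicitly identify $L_{\oslash}$ so as to apply the Davis--Januszkiewicz step cleanly. The irreducible spherical subsets of $\{s,t,u\}$ are the three singletons together with the two pairs labeled $\geq 3$; the pair carrying label $2$ is reducible and so does not contribute a vertex of $L_{\oslash}$. A short check against the adjacency criterion ($U\subseteq U'$, $U'\subseteq U$, or $[U,U']=1$) produces exactly the five edges
$$\{s\}\text{--}\{t\},\quad \{s\}\text{--}\{s,u\},\quad \{u\}\text{--}\{s,u\},\quad \{u\}\text{--}\{t,u\},\quad \{t\}\text{--}\{t,u\};$$
the only candidate chord, $\{s,u\}\text{--}\{t,u\}$, is excluded because these subsets share $u$ and therefore neither commute nor are nested, and the other non-adjacencies follow at once from $m_{tu},m_{us}\neq 2$. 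Thus $L_{\oslash}$ is a pentagon with no chords.

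The conclusion will then follow from the standard Davis--Januszkiewicz observation \cite{dj00} that the RAAG on a pentagon is commensurable to the right-angled Coxeter group on a pentagon, the latter of which contains the fundamental group of a closed hyperbolic surface arising from a reflection tessellation of $\mathbb H^2$ by right-angled pentagons. Since by the first step $\Phi_2$ embeds $\RA$ into the Artin group, each of $A_{\tilde B_2}$ and $A_{\tilde G_2}$ inherits this hyperbolic surface subgroup. I do not anticipate any real obstacle in executing this plan: the combinatorial identification of $L_{\oslash}$ as a pentagon is the only step that requires any care, and after that the argument is simply the specialization of the preceding theorem to these two Euclidean Artin groups.
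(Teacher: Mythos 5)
Your proof is correct and follows essentially the same route as the paper: note that both groups are $2$-dimensional, hence locally reducible, so Theorem~\ref{t:main3} supplies the Generalized Tits Conjecture, and then identify $L_\oslash$ as a chordless pentagon so that the Davis--Januszkiewicz commensurability step from the immediately preceding theorem applies. You simply carry out the combinatorial verification of the pentagon structure more explicitly than the paper does, which elides it by citing the preceding theorem directly.
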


At this point, we do not have a complete characterization of which Artin groups contain hyperbolic surface subgroups. This is still open in the right-angled case. On the other hand, these should be easier to construct outside of the right-angled case. For example, if the nerve $L$ is a $4$-cycle, then the $\RAAG$ is $F_2 \times F_2$, which does not contain such a subgroup. On the other hand, if any edge has a label $> 2$ then the Artin group is locally reducible and the associated $\RAAG$ subgroup has nerve an $n$-gon with $n \ge 5$.

\subsection{Subgroups of type $F_n$ and not $F_{n+1}$}

Spherical Artin groups are generally too high-dimensional for coherence to be an interesting question. A more interesting question is when a spherical Artin group contains a subgroup which is type $F_n$ but not type $F_{n+1}$ where $n+2$ is the cohomological dimension (recall a group is type $F_n$ is it admits a classifying space with finite $n$-skeleton). For example, the type $H_3$ Artin group has cohomological dimension $3$, so coherence is an interesting question. It follows again from the generalized Tits Conjecture that Artin groups of type $H_4$ or $F_4$ have subgroups which are $F_2$ but not $F_3$. 

In \cite{zaremsky}, Zaremsky showed that the pure braid groups $PB_n$ contained subgroups $N$ so that $N$ was type $F_{m-3}$ but not $F_{m-2}$ for $3 \le m \le n$. The existence of these subgroups again follows from the fact that the generalized Tits conjecture holds for the braid groups (on the other hand Zaremsky's examples are normal and ours are not). In this case, the nerve of the $\RAAG$ subgroup is the cone on a flag triangulation of $S^{n-1}$. The Bestvina-Brady subgroup of this $\RAAG$ is type $F_{n-3}$ but not type $F_{n-2}$. To get subgroups that are $F_{m-3}$ but not $F_{m-2}$ for $3 \le m < n$, one can instead map $\RA$ to $\zz$ by sending some generators to $0$.

Since the generalized Tits conjecture holds for type $D_n$, we have an analogous theorem with the same proof as above. 

\begin{Theorem}
The Artin group of type $D_n$ contains subgroups $N$ so that $N$ is type $F_{m-3}$ but not $F_{m-2}$ for $3 \le m \le n$.

\end{Theorem}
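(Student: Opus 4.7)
The proof is a direct adaptation of the braid-group case sketched just above, applied to type $A_{m-1}$ subdiagrams of $D_n$. For each $3 \le m \le n$, delete one fork vertex from the $D_n$ Coxeter diagram to obtain a linear subdiagram of type $A_{n-1}$, and then pick any $m-1$ consecutive vertices from this linear part; call this vertex set $T$. Then $T$ is a Coxeter subdiagram of type $A_{m-1}$, and the corresponding special Artin subgroup $A_T \cong B_m$ embeds in $A_{D_n}$ by the standard inclusion.

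By Proposition \ref{prop:conjecture for An}, the Generalized Tits Conjecture holds for $A_T$, giving an injective homomorphism $\Phi_N: \RA_m \hookrightarrow A_T$, where $\RA_m$ is the RAAG whose nerve $L_\oslash^{A_{m-1}}$ is the partial barycentric subdivision (Section \ref{sec:subdivision}) of the $(m-2)$-simplex on $T$. Since $T$ is itself irreducible and spherical, $L_\oslash^{A_{m-1}}$ has a vertex $v_T$ joined by containment to every other vertex, and flagness forces $v_T$ to be a cone vertex. Writing $L_\oslash^{A_{m-1}} = v_T * K$, we see that $K$ is a flag triangulation of $S^{m-3}$, since $|L_\oslash^{A_{m-1}}|$ is homeomorphic to the $(m-2)$-simplex. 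Correspondingly $\RA_m$ splits as the direct product $\RA_K \times \langle z_T\rangle$.

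Define the character $\chi: \RA_m \to \mathbb Z$ by $\chi(z_T) = 0$ and $\chi(z_U) = 1$ for every other vertex $z_U$ of $L_\oslash^{A_{m-1}}$. Under the direct-product splitting, $\ker \chi$ is the Bestvina--Brady kernel of $\RA_K$ times $\langle z_T \rangle$. Since $K \simeq S^{m-3}$ is $(m-4)$-connected but not $(m-3)$-connected, the Bestvina--Brady theorem shows that this Bestvina--Brady kernel is of type $F_{m-3}$ but not $F_{m-2}$; direct product with $\mathbb Z$ preserves both conditions. Consequently $N := \Phi_N(\ker \chi) \le A_T \le A_{D_n}$ is of type $F_{m-3}$ but not $F_{m-2}$.

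The only substantive ingredient is the Generalized Tits Conjecture itself, established in Proposition \ref{prop:conjecture for An}. The remaining topological fact, that $L_\oslash^{A_{m-1}}$ is the cone on a flag triangulation of $S^{m-3}$, is immediate from the description of the partial subdivision together with the observation that $v_T$ dominates every other vertex, and the passage to Bestvina--Brady kernels via the cone structure is a standard direct-product calculation. Hence no real obstacle arises once Proposition \ref{prop:conjecture for An} is in hand.
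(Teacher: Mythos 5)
Your argument is internally correct, but it takes a different route from the paper and never invokes the $D_n$ Generalized Tits Conjecture (Theorem~\ref{thm:conjecture for Dn}) that this theorem is meant to exercise. Since all of your work happens inside a braid subgroup $A_T\cong B_m$ of type $A_{m-1}$ with $m\le n$, the subgroups you produce live in $A_{n-1}\le A_{D_n}$, and, as the paper notes in the very next sentence, these cases already follow from Zaremsky's theorem via that inclusion. The paper's ``same proof as above'' instead runs the Bestvina--Brady construction on the \emph{full} $\RAAG$ subgroup $\RA\le A_{D_n}$: the $D_n$ diagram has $n$ generators, so its nerve $L$ is the $(n-1)$-simplex, $L_\oslash$ is a cone over a flag triangulation of $S^{n-2}$, and the corresponding Bestvina--Brady kernel is of type $F_{n-2}$ but not $F_{n-1}$, i.e.\ the case $m=n+1$; smaller values of $m$ are then reached by sending additional generators to $0$, exactly as in the preceding braid-group paragraph. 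That $m=n+1$ case is the only genuine gain over Zaremsky and is precisely what an $A$-type-only construction cannot see (the stated range $3\le m\le n$ seems to be off by one relative to this count, and the follow-up remark about where the improvement lies reinforces that the novelty sits at the top of the range). To match the paper, take $T=S$, use Theorem~\ref{thm:conjecture for Dn} in place of Proposition~\ref{prop:conjecture for An}, and observe that $K=\Lk(z_S,L_\oslash)$ is a flag triangulation of $S^{n-2}$.
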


Of course, the Artin group of type $D_n$ contains the braid group $A_{n-1}$, so the only improvement on what Zaremsky's theorem provides is $n = m$.

\subsection{Subgroup separability}
A subgroup $H$ of a group $G$ is \emph{separable} if $H$ is closed in the profinite topology of $G$, or equivalently if every $H$ is equal to the intersection of all the subgroups of finite index of $G$ containing $H$.
 A group $G$ is \emph{subgroup separable} if every finitely generated subgroup of $G$ is separable in $G$.

In \cite{al} Almeida-Lima used our result to formulate a criterion for subgroup separability of Artin group, generalizing the criterion for RAAG's due to Metaftsis-Raptis \cite{mr}. 
They prove that an Artin group $A$ is subgroup separable if and only if $A$ is obtained from Artin groups of rank $2$ via a sequence of two operations: taking free products and taking direct product with $\mathbb Z$.

\section{Questions}

We end the paper with some open questions.  

\begin{Question}
Does the Generalized Tits Conjecture hold for spherical Artin groups of type $E_n$?
\end{Question}

If one can show the conjecture holds for all spherical Artin groups, a natural next step is the Artin groups of FC type.

\begin{Question}
Does the conjecture hold for all Artin groups of FC type?
\end{Question}

It would be very interesting to know some geometric properties of these subgroups:

\begin{Question}
Each standard free abelian subgroup quasi-isometrically embeds into the Artin group $A$. When the generalized Tits conjecture is true, are the entire $\RAAG$ subgroups quasi-isometrically embedded? Does a quasi-isometry between two Artin groups coarsely preserve these $\RAAG$ subgroups?
\end{Question}

\begin{bibdiv}
	\begin{biblist}
	
	\bib{al}{article} {
    AUTHOR = {Almeida, Kisnney},
    author={Lima, Igor},
     TITLE = {Subgroup separability of {A}rtin groups},
   JOURNAL = {J. Algebra},
  FJOURNAL = {Journal of Algebra},
    VOLUME = {583},
      YEAR = {2021},
     PAGES = {25--37},
      ISSN = {0021-8693},
   MRCLASS = {20E26 (20F36)},
  MRNUMBER = {4261198},
       URL = {https://doi.org/10.1016/j.jalgebra.2021.04.032},
}

		\bib{as}{article} {
		
		AUTHOR = {Appel, Kenneth}, 
		author = {Schupp, Paul}, 
	
		Title = {Artin groups and infinite Coxeter groups}, 
		Journal = {Invent. Math},
		Volume = {72}, Year = {1983},  Pages = {201-220},

		}

	\bib{ados}{article} {
		
		AUTHOR = {Avramidi, Grigori}, 
		author = {Davis, Michael W.}, 
		author = {Okun, Boris},
		author = {Schreve, Kevin}, 
		TITLE = {Action dimension of right-angled Artin groups}, 
		Journal = {Bull. of the London Math. Society},
		Volume = {48}, Year = {2016}, Number = {1}, Pages = {115--126},
		date = {2015},
		
		}

\bib{bb1997}{article}{
   author={Bestvina, Mladen},
   author={Brady, Noel},
   title={Morse theory and finiteness properties of groups},
   journal={Invent. Math.},
   volume={129},
   date={1997},
   number={3},
   pages={445--470},
   issn={0020-9910},
   review={\MR{1465330}},
   doi={10.1007/s002220050168},
}

\bib{bourbaki}{book}{
   author={Bourbaki, Nicolas},
   title={Lie groups and Lie algebras. Chapters 4--6},
   series={Elements of Mathematics (Berlin)},
   note={Translated from the 1968 French original by Andrew Pressley},
   publisher={Springer-Verlag, Berlin},
   date={2002},
   pages={xii+300},
   isbn={3-540-42650-7},
   review={\MR{1890629}},
   doi={10.1007/978-3-540-89394-3},
}

\bib{bh}{book}{
   author={Bridson, Martin R.},
   author={Haefliger, Andr{\'e}},
   title={Metric spaces of non-positive curvature},
   series={Grundlehren der Mathematischen Wissenschaften [Fundamental
   Principles of Mathematical Sciences]},
   volume={319},
   publisher={Springer-Verlag, Berlin},
   date={1999},
   pages={xxii+643},
   isbn={3-540-64324-9},
   review={\MR{1744486}},
   doi={10.1007/978-3-662-12494-9},
}

\bib{bs}{article}{
   author={Brieskorn, Egbert},
   author={Saito, Kyoji},
   title={Artin-Gruppen und Coxeter-Gruppen},
   journal={Inventiones Mathematicae},
   volume={17},
   date={1972},
   
   pages={245-271},
   
}

\bib{charney}{article}{
   author={Charney, Ruth},
  
   title={The Tits Conjecture for locally reducible Artin groups},
   journal={Int. Journal of Algebra and Computation},
   volume={10},
   date={2000},
   
   pages={783-797},
 
}

\bib{charney2}{article}{
   author={Charney, Ruth},
  title = {Injectivity of the positive monoid for some Artin groups}
  note={in Geometric Group Theory Down
Under, Proceedings of a Special Year in Geometric Group Theory, Canberra, Australia},
   
	pages = {103-118},
   date={1999},
   }

\bib{cd1}{article}{
   author={Charney, Ruth},
   author={Davis, Michael W.},
   title={The $K(\pi,1)$-problem for hyperplane complements associated to
   infinite reflection groups},
   journal={J. Amer. Math. Soc.},
   volume={8},
   date={1995},
   number={3},
   pages={597--627},
   issn={0894-0347},
   review={\MR{1303028}},
   doi={10.2307/2152924},
}

\bib{collins}{article}{
   author={Collins, Daniel},
   title={Relations amongst the squares of the generators of the braid group},
   journal={Inventiones Mathematicae},
   volume={117},
	pages = {525-529},
   date={1994},
   }
   
\bib{crisp}{article}{
   author={Crisp, John},
   title={Injective maps between Artin groups},
   note={in Geometric Group Theory Down
Under, Proceedings of a Special Year in Geometric Group Theory, Canberra, Australia},
   
	pages = {119-138},
   date={1999},
   }

\bib{cp}{article}{
   author={Crisp, John},
   author={Paris, Luis},
   title={The solution to a conjecture of Tits on the subgroup generated by the squares of the generators of an Artin group},
   journal={Inventiones},
   volume={145},
   date={2001},

}

\bib{dbook}{book}{
   author={Davis, Michael W.},
   title={The geometry and topology of Coxeter groups},
   series={London Mathematical Society Monographs Series},
   volume={32},
   publisher={Princeton University Press, Princeton, NJ},
   date={2008},
   pages={xvi+584},
   isbn={978-0-691-13138-2},
   isbn={0-691-13138-4},
   review={\MR{2360474}},
}

\bib{dh16}{article}{
   author={Davis, Michael W.},
   author={Huang, Jingyin},
   title= {Determining the action dimension of an Artin group by using its complex of abelian subgroups},
   Journal = {Bull. London Math. Soc}, Year = {2017},
  
}

 \bib{dj00}{article}{
	author = {Davis, Michael W.},
	author = {Januszkiewicz, Tadeusz},
	title = {Right-angled {A}rtin groups are commensurable with right-angled {C}oxeter groups},
	journal = {J. Pure Appl. Algebra},

	volume = {153}, YEAR = {2000},
	number = {3},
	pages = {229--235},
,

 }

 \bib{djs1}{article}{
	author = {Davis, Michael W.},
	author = {Januszkiewicz, Tadeusz},
	author = {Scott, Rick}
	title = {Fundamental groups of blow-ups},
	journal = {Advances in Mathematics},

	volume = {177}, YEAR = {2003},
	number = {1},
	pages = {115-179},

 }

 \bib{djs2}{article}{
	author = {Davis, Michael W.},
	author = {Januszkiewicz, Tadeusz},
	author = {Scott, Rick},
	title = {Nonpositive curvature of blow-ups},
	journal = {Selecta Mathematica},

 YEAR = {1998},
	number = {4},
	pages = {491-547},
	}

\bib{dls}{article}{
   author={Davis, Michael W.},
   author={Le, Giang},
	author = {Schreve, Kevin},
   title= {The action dimension of simple complexes of groups},
   Journal = {Topology}, Year = {2019},
	pages = {1266--1314},
	volume = {12},
   
}

	\bib{deligne}{article}{
   author={Deligne, Pierre},
   title={Les immeubles des groupes de tresses g\'en\'eralis\'es},
   language={French},
   journal={Invent. Math.},
   volume={17},
   date={1972},
   pages={273--302},
   issn={0020-9910},
   review={\MR{0422673}},
}

\bib{dp}{article}{ 
author = {De Concini, Cee}, 
author = {Procesi, Cee},  
title = {Wonderful Models of Subspace Arrangements}, 
JOURNAL = {Selecta Mathematica},
VOLUME = {1}, 
YEAR = {1995}, 
NUMBER = {3}, 
PAGES = {459--494}, 
}

 \bib{droms}{article}{
	author = {Droms, Carl},
	title = {Graph groups, coherence, and three-manifolds},
	date = {1987},
	issn = {0021-8693},
	journal = {J. Algebra},
	volume = {106},
	number = {2},
	pages = {484\ndash 489},
}

 \bib{dlser}{article}{
	author = {Droms, Carl},
	author = {Lewin, Jacques},
	author = {Servatius, Herman}
	title = {The Tits conjecture and the five string braid group},
	date = {1990},

	note = {in Topology and Combinatorial Group Theory. Lecture Notes in Mathematics, vol 1440. Springer, Berlin, Heidelberg },

	pages = {48-51},
}

 \bib{MCGbook}{book}{
	author = {Farb, Benson},
	author = {Margalit, Dan}
	title = {A primer on mapping class groups},
	date = {2012},
  	SERIES = {Princeton Mathematical Series},
  	VOLUME = {49},
 	PUBLISHER = {Princeton University Press, Princeton, NJ},
      	YEAR = {2012},
     	PAGES = {xiv+472},

}

\bib{chevie}{article}{
	author = {M. Geck and G. Hiss and F.
                  L{\"u}beck and G. Malle and G. Pfeiffer},
	title = {{\sf CHEVIE} -- {A} system for computing and processing
                  generic character tables for finite groups of {L}ie
                  type, {W}eyl groups and {H}ecke algebras"},
	journal = {Appl. Algebra Engrg. Comm. Comput.},
	volume = {7},
	pages = {175--210},
	year = {1996},
}

  \bib{gordon}{article} {

	author = {Gordon, C. McA.},
	title = {Artin groups, 3-manifolds and coherence},
	journal = {Bol. Soc. Mat. Mexicana (3)},

	volume = {10}, YEAR = {2004},
	number = {Special Issue},
	pages = {193--198},
	issn = {1405-213X},
 }

  \bib{glr}{article} {

	author = {Gordon, C. McA.},
	author = {Long, Darren},
	author = {Reid, Alan},
	title = {Surface subgroups of Coxeter and Artin groups},
	journal = {Journal of Pure and Applied Algebra},
	volume = {189}, YEAR = {2004},
	number = {Special Issue},
	pages = {135-148},
	
 }

\bib{humphries}{article}{
author = {Humphries, S.P.} 
  
   title={On representations of Artin groups and the Tits Conjecture},
   journal={J. Algebra},
   volume={169},
   date={1994},
 
   pages={847-862},
   
}

\bib{kim-koberda}{article}{
author = {Kim, Sang-hyun} 
   author={Koberda, Thomas},
   title={Embedability between right-angled Artin groups},
   journal={Geom. Topol.},
   volume={17},
   date={2013},
 
   pages={493-530},
   
}

\bib{koberda}{article}{
   author={Koberda, Thomas},
   title={Right-angled Artin groups and a generalized isomorphism problem for finitely generated subgroups of mapping class groups},
   journal={GAFA},
   volume={22},
   date={2012},
 
   pages={1541--1590},
   
}

\bib{le}{thesis}{ 
title ={ The Action Dimension of Artin Groups}, 
author ={ Le, Giang}, 
year = {2016}, 
school ={ Department of Mathematics, Ohio State University}, 
type = {phd}, }

\bib{lek}{article}{
   author={van der Lek, Harm},
   title={Extended Artin groups},
   conference={
      title={Singularities, Part 2},
      address={Arcata, Calif.},
      date={1981},
   },
   book={
      series={Proc. Sympos. Pure Math.},
      volume={40},
      publisher={Amer. Math. Soc., Providence, RI},
   },
   date={1983},
   pages={117--121},
   review={\MR{713240}},
}


\bib{matsumoto}{article}{
   author={Matsumoto, Makuto},
   title={A presentation of mapping class groups in terms of Artin
groups and geometric monodromy of singularities},
   journal={Math. Ann.},
   volume={316},
   date={2000},
   pages={401-418},

}

\bib{mr}{article}{
    AUTHOR = {Metaftsis, V.},
    author = {Raptis, E.},
     TITLE = {On the profinite topology of right-angled {A}rtin groups},
   JOURNAL = {J. Algebra},
  FJOURNAL = {Journal of Algebra},
    VOLUME = {320},
      YEAR = {2008},
    NUMBER = {3},
     PAGES = {1174--1181},
      ISSN = {0021-8693},
   MRCLASS = {20F36 (57M07)},
  MRNUMBER = {2427635},
MRREVIEWER = {Eddy Godelle},
       URL = {https://doi.org/10.1016/j.jalgebra.2008.03.031},
}

\bib{pv}{article}{
   author={Perron, B.},
	author={Vannier, J.P.},
   title={Groupe de monodromie geometrique des singularites
simples},
   journal={Math. Ann.},
   volume={306},
   date={1996},
   pages={231-245},

}

\bib{ot}{book}{
   author={Orlik, Peter},
   author={Terao,Hiroaki},
   title={Arrangements of hyperplanes},
  
   publisher={Springer-Verlag, Berlin Heidelberg GmbH},
   date={1992},
   pages={ix+337},
  
}

 \bib{runnels}{article}{
   author={Runnels, Ian},
   title={Effective generation of right-angled artin groups in mapping class
   groups},
   journal={Geom. Dedicata},
   volume={214},
   date={2021},
   pages={277--294},
   issn={0046-5755},
   review={\MR{4308279}},
   doi={10.1007/s10711-021-00615-0},
}

\bib{s87}{article}{
   author={Salvetti, Mario},
   title={Topology of the complement of real hyperplanes in $\mathbb C^n$ },
   journal={Invent. Math.},
   volume={88},
   date={1987},
   pages={603-618},

}

\bib{seo}{article}{
    AUTHOR = {Seo, Donggyun},
     TITLE = {Powers of {D}ehn twists generating right-angled {A}rtin
              groups},
   JOURNAL = {Algebr. Geom. Topol.},
  FJOURNAL = {Algebraic \& Geometric Topology},
    VOLUME = {21},
      YEAR = {2021},
    NUMBER = {3},
     PAGES = {1511--1533},
      ISSN = {1472-2747},
   MRCLASS = {20F65 (20E08 57M60)},
  MRNUMBER = {4299673},
       URL = {https://doi.org/10.2140/agt.2021.21.1511},
}

\bib{serre}{book}{
   author={Serre, Jean-Pierre},
   title={Trees},
   note={Translated from the French by John Stillwell},
   publisher={Springer-Verlag, Berlin-New York},
   date={1980},
   pages={ix+142},
   isbn={3-540-10103-9},
   review={\MR{607504}},
}

\bib{gap3}{manual}{
	author= {Martin Sch{\accent127 o}nert and others},
  	title= {{GAP} -- {Groups}, {Algorithms}, and
                   {Programming} -- version 3 release 4
                   patchlevel 4},
  	year= {1997},
  	organization= {Lehrstuhl D f{\accent127 u}r Mathematik,
                Rheinisch Westf{\accent127 a}lische
                Technische Hoch\-schule},
  	address= {Aachen, Germany},
  	notes         = {PAGES: 1571},
 	 keywords      = {groups; *; gap; manual}
}

\bib{wajnryb}{article}{
   author={Wajnryb, Bronislaw},
   title={Artin groups and geometric monodromy},
   journal={Inventiones Mathematicae},
   volume={138},
   date={1999},
   pages={563-571},

}

\bib{wise}{article}{
   author={Wise, Daniel},
   title={The last incoherent Artin group},
   journal={Proceedings of the AMS},
   volume={141},
   date={2013},
   pages={139-149},

}

\bib{zaremsky}{article}{ 
   author={Zaremsky, Matthew},
   title={Separation in the BNSR-invariants of the pure braid groups},
   journal={Pub. Math},
   volume={61},
   date={2017},
   pages={337-362},

}

	\end{biblist}
\end{bibdiv}

\end{document}